\newtheorem{theorem}{Theorem}
\newtheorem{lemma}[theorem]{Lemma}
\newtheorem{corollary}[theorem]{Corollary}
\DeclarePairedDelimiter\floor{\lfloor}{\rfloor}
\newcommand{\cP}{{\mathcal P}}
\newcommand{\rF}{{\mathbb F}}
\newcommand{\rZ}{{\mathbb Z}}
\newcommand{\rP}{{\mathbb P}}
\newcommand{\ep}[1]{{\mathbf e}_p\left({#1}\right)}
\newcommand{\el}[1]{{\mathbf e}_{\ell}\left({#1}\right)}
\newcommand{\ud}{{\,\mathrm d}}
\newcommand{\ord}[1]{\,\mathrm{ord} \, {#1}}
\newcommand{\tr}[1]{\,\mathrm{Tr}\left({#1}\right)}
\newcommand{\Tr}[1]{\,\mathrm{Tr}_{#1}}
\newcommand{\eqr}[1]{\mbox{(\ref{eq:#1})}}
\newcommand{\ie}{i.e.\ }
\def\exp{\mathrm{exp}}
\newcommand{\tmt}[4]{\left({#1\atop #3}{#2\atop #4}\right)}
\newcommand{\mr}[1]{\mathrm{#1}}
\newcommand{\inner}[1]{\left\langle{#1}\right\rangle}
\newcommand\rwidehat[1]{\ThisStyle{%
  \setbox0=\hbox{$\SavedStyle#1$}%
  \stackengine{-1.0\ht0+.5pt}{$\SavedStyle#1$}{%
    \stretchto{\scaleto{\SavedStyle\mkern.15mu\char'136}{2.6\wd0}}{1.4\ht0}%
  }{O}{c}{F}{T}{S}%
}}
\begin{document}
\date{\today}
\title[exponential sums associated to modular forms]{exponential sums in prime fields for modular forms}
\author{Jitendra Bajpai, Subham Bhakta and Victor~C.~Garc\'{i}a }
\address{J.~Bajpai : Mathematisches Institut, Georg-August-Universit\"at G\"ottingen, Germany.} 
\email{jitendra@math.uni-goettingen.de}
\curraddr{Institut f\"ur Geometrie, Technische Universit\"at Dresden, Germany}
\email{jitendra.bajpai@tu-dresden.de}
\address{S. Bhakta: Mathematisches Institut, Georg-August-Universit\"at G\"ottingen, Germany.} \email{subham.bhakta@mathematik.uni-goettingen.de }
\address{V.~C.~Garc\'{i}a : Universidad Aut\'onoma Metropolitana, M\'exico.} \email{vcgh@azc.uam.mx}
\subjclass[2010]{Primary 11F30, 11L07; Secondary 11P05, 11B37, 11F80}
\keywords{Exponential Sums, Korobov's Bound, Modular Forms}

\begin{abstract}
The main objective  of this article is to study the exponential sums associated to Fourier coefficients of modular forms supported at 
numbers having a fixed set of prime factors. This is achieved by establishing an improvement on Shparlinski's bound for exponential sums attached 
to certain linear recurrence sequences over finite fields. 
\end{abstract}

\maketitle

\tableofcontents

\section{Introduction} 
Let $f$ be a modular form of weight $k \in 2\rZ$
    and level $N$ such that it has a Fourier expansion 
  \begin{equation*}\label{eq:mf} 
    f(z)=\sum_{n=1}^{\infty}a(n) e^{2\pi i n z}, \quad \Im (z) \ge 0,
  \end{equation*}
    with $a(n)$ be the $n^{th}$ Fourier coefficient. In this article, we shall restrict to the family of  modular forms with rational coefficients, 
    that is,  $f(z)$ with $a(n)\in \mathbb{Q}$ 
    for every $n$. We first consider Hecke eigenforms or simply eigenforms in the space of  cusp forms of weight $k$ for the congruence subgroup 
    $\Gamma_1(N)$ with trivial nebentypus. When $f$ is an eigenform with integer Fourier coefficients, it follows from Deligne-Serre 
    that for any prime $\ell,$ there exist a corresponding Galois representation
  \[ 
    \rho_f^{(\ell)}:\mathrm{Gal}\left(\overline{\mathbb{Q}}/\mathbb{Q}\right) \longrightarrow
    \mathrm{GL}_2\left(\mathbb{Z}_{\ell}\right)
  \]
    such that $\text{tr}(\rho_f^{(\ell)}(\text{Frob}_p))=a(p),$ for any prime $p \nmid N\ell.$ For quick reference on the Deligne-Serre correspondence, we refer the interested reader to~\cite[Chapter 3]{Ferraguti}. In particular, 
    $a(p)\hspace{-0.1cm}\pmod\ell$ is determined by the trace of the corresponding Frobenius element in
  \[ 
    \text{GL}_2(\mathbb{Z}_{\ell}/\ell\mathbb{Z}_{\ell})=\text{GL}_2(\mathbb{Z}/\ell\mathbb{Z}).
  \] 
    In certain cases, Chebotarev's density 
    theorem implies that given any $\lambda \in \mathbb{F}_{\ell},$ there exists a prime $p$ such that $a(p)\equiv\lambda\hspace{-0.1cm} \pmod{\ell}.$ However, the set of such primes comes with density strictly less than $1$. Now one may naturally ask, whether there is an absolute constant $s$ such that for any given primes $p$ and $\ell,$ and any element $\lambda \in \mathbb{F}_{\ell},$ the equation 
  \[
    \sum_{i=1}^{s} a\left(p^{n_{(i,p)}}\right)\equiv \lambda\hspace{-0.3cm}\pmod\ell
  \]
    is solvable for some tuple $\left(n_{(i,p)}\right)_{1 \leq i\leq  s}$ of positive integers? In other words, does there exist an absolute constant $s$ such that, given primes $p$ and $\ell,$ any element of $\mathbb{F}_{\ell}$ can be written as sum of at most $s$ elements of the set $\{a(p^n)\}_{n \geq 0}?$ 
    
    Let $\tau(n)$ be the Ramanujan function, 
    which is defined by the identity
  \[
    \Delta(z)=q\prod_{n\geq 1}(1-q^n)^{24} = \sum_{n\geq 1}\tau(n)q^n, \quad \textrm{with}\,\,  q=\exp(2 \pi i z).
  \]  
    In~\cite{Shparlinski2005}, Shparlinski proved that the set $\{\tau(n)\}$ is an additive basis modulo any prime $\ell,$ that is
    there is an absolute constant $s$ such that the Waring-type congruence
  \[
    \tau(n_1)+\cdots+ \tau(n_s) \equiv \lambda \hspace{-0.3cm}\pmod{\ell}
  \]    
    is solvable for any residue class $\lambda\hspace{-0.1cm} \pmod{\ell}.$ In~\cite{GGK08}, Garaev, Garc\'ia and Konyagin  proved that for 
    any $\lambda \in \mathbb{Z}$, the equation
  \[
    \sum_{i=1}^{s}\tau(n_i)=\lambda
  \]
    always has a solution for $s=74,000.$ 

    Later Garc\'ia and Nicolae~\cite{GN18} extended this result for coefficients $a(n)$
    of normalized Hecke eigenforms of weight $k$ in $S_k^{\textrm{new}}(\Gamma_0(N))$. 
    More precisely, they proved that for any $\lambda \in \mathbb{Z}$,
    the equation
  \[ 
    \sum_{i=1}^{s}a(n_i)=\lambda
  \]
    always has a solution for  some $s \leq c(f)$ and $c(f)$ satisfying
  \[
    c(f)\ll (2N^{3/8})^{\tfrac{k-1}{2} + \varepsilon} k^{\tfrac{3}{16}k + O(1) + \varepsilon} \log(k+1).
  \]    

    The proof of the above two results are connected to the identity $a(p^2)= a^2(p)-p^{k-1}$ with
    the solubility of the  equation
  \[
    p^{k-1}_1 + \cdots + p^{k-1}_s = N, \quad \textrm{for primes }\, p_1,\ldots , p_s.
  \]

    We are actually studying a closely related problem over finite 
    fields, and our main tool is Theorem~\ref{Thm:Main} which provides 
    a nontrivial bound for exponential sums with coefficients of modular forms. In other words, in Theorem~\ref{thm:main1-1} and Theorem~\ref{thm:main1-2} we are generalizing Shparlinski's result to a wider class of modular forms. We shall mainly see in which cases the $n_i$'s can be taken to be the powers of a given prime, and we record this in Corollary~\ref{le:later} and Corollary~\ref{le:later2}. To study this problem, we shall primarily focus on the exponential sums of type
     \begin{equation*}
    \max_{\xi \in \rF_{\ell}^*} \left|\sum_{n\le \tau}\el{\xi a(p^n) }\right|
  \end{equation*}
where $p,\ell$ are primes, and $\tau$ is a suitable parameter which we shall specify later. Moreover, we shall also study such exponential sums for certain cusp forms which are not necessarily eigenforms.

 When $f$ is a normalized eigenform, it is well known that $a(n)$ is a multiplicative function and for any prime $p\nmid N$ satisfies the relation
  \begin{equation}\label{identity:primepowers}
    a(p^{n+2})=a(p)a(p^{n+1})-p^{k-1}a(p^n),\quad  n\ge 0.
  \end{equation} 
    Moreover, we have $a(p^n)=a(p)^n$ for any prime $p\mid N$. These facts come from the properties of Hecke operators, see ~\cite[Proposition 5.8.5]{DS2005}. If $a(p) \in \mathbb{Q},$ then we can consider $a(p) \hspace{-0.1cm}\pmod \ell \in \mathbb{F}_{\ell}$ naturally. We shall shortly give a brief review of linear recurrence sequences. On the other hand, any cuspform can be uniquely written as a $\mathbb{C}$-linear combination of pairwise orthogonal eigenforms 
    with Fourier coefficients coming from $\mathbb{C}$. See~\cite[Chapter 5]{DS2005} for a brief 
    review of the Hecke theory of modular forms. Here we are concerned  with all such eigenforms having rational coefficients. In this case, the sequence $\{a(p^n)\}$ is still a linear recurrence sequence of possibly higher degree.

 \subsection{Linear recurrence sequences and Shparlinski's bound}\label{se:lrs}
We now provide a quick overview of the basic theory of linear recurrence sequences.
Let $r\ge 1$ be an integer and $p$ be an arbitrary  prime number.
    A \emph{linear recurrence sequence} $\{s_n\}$ of order $r$ in $\rF_p$ consists of 
    a recursive relation
  \begin{equation}\label{eq:RecurrenceSeq}
    s_{n+r}\equiv a_{r-1} s_{n+r-1} + \cdots + a_0 s_n \hspace{-0.2cm}\pmod p, \quad \textrm{with } n=0,1,2, \ldots \,,
  \end{equation}
    and initial values $s_0, \ldots, s_{r-1} \in \rF_p.$ 
    Here $a_0, \ldots, a_{r-1} \in \rF_p$ are fixed. The 
    {\it{characteristic polynomial $\omega(x)$ associated to}} $\{s_n\}$ is   
  \[
    \omega(x)=x^{r}- a_{r-1}x^{r-1}- \cdots - a_1 x - a_{0}.
  \]    

    Under certain assumptions, linear recurrence sequences become periodic modulo $p$,  
    see~\cite[Lemma 6.4]{Korobov1992} and~\cite[Theorem 6.11]{Lidl-Niederreiter}.

    Let $p$ be a prime number and $\omega(x)$ be the characteristic polynomial of a linear 
    recurrence sequence $\{ s_n\}$ defined by equation~\eqr{RecurrenceSeq}. If 
    $(a_0,p)=1$ and at least one of the  $s_0, \ldots, s_{r-1}$ are not divisible by 
    $p,$ then the sequence $\{s_n\}$ is periodic modulo $p,$ that is for some $T\ge 1$, 
   \[
      s_{n+T}\equiv s_n \hspace{-0.2cm}\pmod p,  \qquad n=0,1,2,\ldots\, .
   \] 
    The least  positive period is denoted by $\tau.$ Moreover, $ \tau \le p^r-1$ and 
    $\tau$ divides $T$ for any period $T \ge 1$ of the sequence $\{s_n\}.$

    In 1953, Korobov~\cite{Korobov1953} obtained bounds for rational exponential sums
    involving linear recurrence sequences in residue classes. In particular, 
    for the fields of order $p,$ if 
    $\{s_n\}$ is a linear recurrence sequence of order $r$ with $(a_0,p)=1$
    and period $\tau$, it follows that
  \begin{equation}\label{eq:Korobov}
    \left|\sum_{n\le \tau} \ep{s_n}\right|\le p^{r/2}.
  \end{equation}  
    
    Note that such a bound is nontrivial if 
    $p^{r/2}< \tau$ and  asymptotically effective only if $p^{r/2} / \tau \to 0$ as $p\to \infty.$ Estimate~\eqref{eq:Korobov} is optimal in general terms, indeed Korobov~\cite{Korobov1992} showed that there is a linear recurrence sequence $\{s_n\}$
    with length $r$ satisfying
  \[
        \frac{1}{2} p^{r/2} < \left|\sum_{n\le \tau} \ep{s_n}\right|\le p^{r/2}.
  \]
    In turn, it has been proved that there exists a class of linear recurrence sequences with a better upper bound
  \[
    \left|\sum_{n\le \tau} \ep{s_n}\right|\le \tau^{1/2+ \varepsilon}.
  \]
    However, the proof of the existence is ineffective in the sense that we do not know any
    explicit characteristics of such family, see~\cite[Section 5.1]{EGPASWT2003}.\\

    The case when the associated polynomial $\omega(x)$ is irreducible in $\rF_p[x],$ was widely studied.
    For instance, from a more general result due to Katz \cite[Theorem 4.1.1.]{Katz1988} 
    it follows that if $\omega(0)=1$ then
  \[
    \left|\sum_{n \le \tau} \ep{s_n} \right|\le p^{(r-1)/2}.
  \]    
    Shparlinski~\cite{Shparlinski2004} improved Korobov's bound for all nonzero linear recurrence sequences
    with irreducible characteristic polynomial $\omega(x) $ in $\rF_p[x].$ From~\cite[Theorem 3.1]{Shparlinski2004} we get
  \[
    \max_{\xi \in \rF_p^*} \left|\sum_{n \le \tau}\ep{\xi s_n} \right| \le  
      \tau { p^{-\varepsilon/(r-1)}}+ r^{3/11}\tau^{8/11}p^{(3r-1)/22},
  \]    
    with period $\tau$ satisfying that
  \begin{equation}\label{eq:conditions}
     \max_{\substack{ d< r \\ d| r}}  \gcd(\tau, p^d-1) < \tau p^{-\varepsilon}. 
  \end{equation}
    In particular, if $r$ is fixed then the upper bound is non trivial 
    for $\tau \ge p^{r/2 - 1/6 + \varepsilon}.$ We would like to point out that the
    condition \eqref{eq:conditions} above is essential if $\tau \le p^{r/2 + \varepsilon}$, for details see the example in~\cite[Section 1]{Shparlinski2004}. Moreover, we consider the general case when the associated polynomial $\omega(x)$
    is not necessarily irreducible, and we deduce the following key result.
   
    \begin{theorem}\label{Thm:Main}
      Let $p$ be a large prime number and $\varepsilon  > \varepsilon' > 0.$  Suppose that  $\{ s_n\}$ is 
      a nonzero linear recurrence sequence 
     with positive order  and period $\tau$ in $\rF_p$ such that its characteristic polynomial 
     $\omega(x)$ has distinct roots in its
     splitting field, and $(\omega(0), p)=1$. Set $\omega(x)=\prod_{i}^{\nu}\omega_i(x)$ as a product of
     distinct irreducible polynomials
     in $\rF_p[x],$ and for each $i,$ $\alpha_i$ denotes a root of $\omega_{i}(x).$ 
     If all polynomials $\omega_i(x)$ have the same degree, \ie $\deg \omega_i(x)=r>1,$ 
     and the system $\tau_i=\ord{\alpha_i},$  
     satisfies
   \begin{align}\label{eq:RootsOrder}
& \textbf{a) } \, \max_{\substack{ d< r \\ d| r}} \gcd (\tau_i, p^d-1) 
                < \tau_i p^{-\varepsilon}, \quad \textrm{at least for one } 1 \le i \le \nu, \\
&\textbf{b) } \, \gcd (\tau_i,\tau_j) < p^{\varepsilon'},~\textrm{for~some~pair}~i \neq j~{along~with}~\mathbb{F}_p(\alpha_i)\cong\mathbb{F}_p(\alpha_j),\nonumber
   \end{align} then there exists a $\delta=\delta(\varepsilon,\varepsilon')>0$ such that
   \begin{equation}\label{eq:t1}
     \max_{\xi \in \rF_p^*} \left|\sum_{n \le \tau}\ep{\xi s_n} \right| \le   \tau { p^{-\delta}}.
    \end{equation}
    \end{theorem}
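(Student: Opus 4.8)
The plan is to reduce \eqref{eq:t1} to an estimate for a linear exponential sum over a cyclic subgroup of $\rF_{p^r}^{*}$, and then appeal to Shparlinski's Theorem~3.1 of \cite{Shparlinski2004}. Since $\omega(x)$ has simple roots and $(\omega(0),p)=1$, the sequence $\{s_n\}$ admits the closed form
\[
s_n=\sum_{i=1}^{\nu}\tr{\beta_i\alpha_i^{\,n}},\qquad \beta_i\in\rF_{p^r},
\]
where $\mathrm{Tr}$ is the trace from $\rF_{p^r}$ to $\rF_{p}$, and the least period equals $\tau=\operatorname{lcm}\{\tau_i:\beta_i\neq 0\}$. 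Discarding the indices with $\beta_i=0$ (this changes neither $\tau$ nor hypotheses \textbf{(a)}, \textbf{(b)}, which concern roots whose orders divide $\tau$) we may assume every $\beta_i\neq 0$. Writing $\psi(x)=\ep{\tr{x}}$ for the canonical additive character of $\rF_{p^r}$, it then suffices to bound
\[
S_\xi:=\sum_{n=0}^{\tau-1}\psi\Bigl(\sum_{i=1}^{\nu}\xi\beta_i\alpha_i^{\,n}\Bigr),\qquad \xi\in\rF_p^{*}.
\]

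The heart of the argument is a decomposition of the range based on the Chinese Remainder Theorem that peels off one factor. Fix an index $\ell$, set $\tau^{(\ell)}=\operatorname{lcm}_{i\neq\ell}\tau_i$ and $g=\gcd(\tau_\ell,\tau^{(\ell)})$, so $\tau=\tau_\ell\tau^{(\ell)}/g$; the reduction map $\rZ/\tau\rZ\hookrightarrow\rZ/\tau_\ell\rZ\times\rZ/\tau^{(\ell)}\rZ$ is injective with image $\{(u,v):u\equiv v\ (\mathrm{mod}\ g)\}$. Grouping the corresponding double sum over $n$ by the common residue $w=u\bmod g$ and using $\alpha_\ell^{\,w+gt}=\alpha_\ell^{\,w}(\alpha_\ell^{\,g})^{t}$, one obtains $S_\xi=\sum_{w=0}^{g-1}\Phi_w\Psi_w$, where
\[
\Phi_w=\sum_{z\in K}\psi\bigl(\xi\beta_\ell\alpha_\ell^{\,w}z\bigr),\qquad K:=\langle\alpha_\ell^{\,g}\rangle,\quad |K|=\tau_\ell/g,
\]
and $\Psi_w$ is a sum of exactly $\tau^{(\ell)}/g$ unimodular terms. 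Bounding $|\Psi_w|\le\tau^{(\ell)}/g$ trivially gives $|S_\xi|\le\tau^{(\ell)}\max_w|\Phi_w|$; since $\xi\beta_\ell\alpha_\ell^{\,w}\in\rF_{p^r}^{*}$ and $\tau^{(\ell)}\cdot|K|=\operatorname{lcm}(\tau_\ell,\tau^{(\ell)})=\tau$, the theorem follows once one finds a choice of $\ell$ and a $\delta_1=\delta_1(\varepsilon,\varepsilon')>0$ with
\[
W(K):=\max_{c\in\rF_{p^r}^{*}}\Bigl|\sum_{z\in K}\psi(cz)\Bigr|\ \le\ |K|\,p^{-\delta_1};
\]
one then takes $\delta:=\delta_1$.

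The quantity $W(K)$ is precisely of the shape covered by \cite[Theorem~3.1]{Shparlinski2004}: for $c\neq0$ the sequence $m\mapsto\tr{c\,(\alpha_\ell^{\,g})^{m}}$ is a linear recurrence over $\rF_p$ with irreducible characteristic polynomial of degree $r_0=[\rF_p(\alpha_\ell^{\,g}):\rF_p]$ and least period exactly $|K|$. Hence $W(K)\le|K|p^{-\delta_1}$ holds as soon as $|K|$ exceeds the threshold in that theorem (in the regime where $|K|$ is only a small fixed power of $p$, one invokes instead the subgroup bound of Bourgain--Glibichuk--Konyagin) and the genericity condition $\max_{e\mid r_0,\ e<r_0}\gcd(|K|,p^{e}-1)<|K|p^{-\varepsilon''}$ holds for a small fixed $\varepsilon''$ with $\varepsilon''<\varepsilon-\varepsilon'$. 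To secure these from the hypotheses one combines \textbf{(a)} and \textbf{(b)}: condition \textbf{(b)} furnishes a pair $\tau_i,\tau_j$ with $\gcd(\tau_i,\tau_j)<p^{\varepsilon'}$ (the clause $\rF_p(\alpha_i)\cong\rF_p(\alpha_j)$ being automatic here, both fields equalling $\rF_{p^r}$), which is used to choose the peeling so that the pivotal $\gcd$ $g$ stays below $p^{\varepsilon'}$; condition \textbf{(a)}, for the index $i_1$, then gives both the size, since $\gcd(\tau_{i_1},p^{e}-1)<\tau_{i_1}p^{-\varepsilon}$ forces $\tau_{i_1}>p^{\varepsilon}$, and, taking $\ell=i_1$, the genericity, since
\[
\gcd(|K|,p^{e}-1)\le\gcd(\tau_{i_1},p^{e}-1)<\tau_{i_1}p^{-\varepsilon}=|K|\,g\,p^{-\varepsilon}\le|K|\,p^{-\varepsilon''};
\]
the same inequality forces $r_0=r$, i.e.\ $\alpha_{i_1}^{\,g}$ does not collapse into a proper subfield. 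For $\nu=2$ the hypotheses necessarily concern the same pair of indices and the argument closes directly.

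The genuine difficulty is this last coordination when $\nu\ge3$: \textbf{(a)} singles out one index $i_1$, while \textbf{(b)} singles out a possibly disjoint almost-coprime pair, so a priori there need be no single factor that is at once generic (from \textbf{(a)}) and almost coprime to the joint period $\tau^{(\ell)}$ of all the others (from \textbf{(b)}). Resolving this requires a careful combinatorial choice of which factor to peel --- using the almost-coprime pair of \textbf{(b)} as the seed of a partition of $\{1,\dots,\nu\}$ whose block containing $i_1$ is almost coprime to its complement --- and, when no such one-shot splitting is available, a bounded iteration of the peeling step in which the trivially bounded sums $\Psi_w$ are themselves opened up by completion (at the cost of harmless logarithmic factors); it is here that the isomorphism clause of \textbf{(b)} is genuinely used, to transport cyclic-subgroup structure between factors lying in the common field $\rF_{p^r}$. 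One must also handle the "collision" case in which several roots share the same multiplicative order: then those $\langle\alpha_i\rangle$ coincide and $S_\xi$ acquires a sparse-polynomial-over-subgroup form, again estimated by Bourgain--Glibichuk--Konyagin type bounds. In every branch the recurring constraint, and the main obstacle, is to keep each intermediate cyclic group simultaneously large enough for the Weil (or Bourgain--Glibichuk--Konyagin) input and, via the transferred form of \textbf{(a)}, generic enough to avoid the degeneracy in which the ambient twist $c$ has vanishing trace into the relevant subfield.
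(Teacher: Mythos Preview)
Your $\nu=2$ argument is essentially the paper's: split into residue classes modulo $h=\gcd(\tau_1,\tau_2)$, factor via CRT, and save on one factor. Two remarks, though. First, the single-factor input you need is the Bourgain--Chang bound \cite[Theorem~2]{BourgainChang2006} for Gauss sums over small subgroups of $\rF_{p^r}^{*}$, not Shparlinski's Theorem~3.1 (which is only nontrivial for $|K|>p^{r/2-1/6+\varepsilon}$) and not BGK (which is a prime-field result and does not apply to $\rF_{p^r}$ with $r>1$). Second, the clause $\rF_p(\alpha_i)\cong\rF_p(\alpha_j)$ in \textbf{(b)} is indeed automatic here, so there is no hidden content to exploit from it later.

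The genuine gap is exactly where you locate it, and your sketched repairs do not close it. For $\nu\ge 3$ your peeling requires $g=\gcd(\tau_\ell,\operatorname{lcm}_{i\neq\ell}\tau_i)<p^{\varepsilon'}$ \emph{and} condition \textbf{(a)} for the same index $\ell$. Hypothesis \textbf{(b)} controls only one pairwise gcd, not the gcd of $\tau_\ell$ against the full lcm of the others; e.g.\ with $\nu=3$, $\tau_1=\tau_3$, $\gcd(\tau_1,\tau_2)$ small, and \textbf{(a)} holding only at $i_1=1$, no choice of $\ell$ works. Your proposed iteration --- ``open up $\Psi_w$ by completion'' --- presupposes a nontrivial bound on the $(\nu-1)$-factor sum $\Psi_w$, which would already require \textbf{(a)},\textbf{(b)} for the reduced system, the very thing you cannot guarantee; and the ``collision'' branch does not help because equal orders need not mean equal subgroups after twisting by the $\beta_i$.

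The paper avoids this coordination problem entirely by switching methods at $\nu\ge 3$: it uses Garaev's moment technique. One averages the shifted sums $\sum_n\ep{\xi s_{m+n}}$ over $m$, raises to a power $2t$, and bounds the result by $q^{\nu}J_{t,\nu}^{2}$, where $J_{t,\nu}$ counts solutions to the simultaneous system $\sum\alpha_i^{n_j}=\sum\alpha_i^{n_{t+j}}$. The key monotonicity $J_{t,\nu}\le J_{t,\nu-1}$ reduces to a $(\nu-1)$-factor exponential sum $\tr{\lambda_1\alpha_1^m+\cdots+\lambda_{\nu-1}\alpha_{\nu-1}^m}$ with \emph{arbitrary} coefficients $\lambda_i$, so one may drop any chosen index and keep the indices witnessing \textbf{(a)} and \textbf{(b)}; the induction then bottoms out at the $\nu=2$ computation. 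This energy/moment step, not a refinement of peeling, is the missing idea.
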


    This generalizes~\cite[Corollary]{Bourgain2005a} due to Bourgain, where all of the irreducible factors have degree $r=1$ while Theorem~\ref{Thm:Main} deals with the case $r\ge 2.$
    This will be of immense use in what follows, roughly because the characteristic polynomial associated to $\{a(p^n)\}$ have degree two.
    
    Theorem~\ref{Thm:Main} will be essential to establish Theorem~\ref{thm:main1-1} and Corollaries \ref{coro:Waring} 
    and ~\ref{coro:Non-linearity}.  
    Our approach, which relies on the sum-product phenomenon, provides an improvement over Shparlinski's Theorem 3.1 of \cite{Shparlinski2004} 
    for the same class of linear recurrence sequences, obtaining non trivial exponential sums in a larger range. To be more precise, if $p(r)$ denotes the least prime divisor of $r$ then any $\tau>p^{r/p(r)+\varepsilon}$ satisfies 
  \[
    \tau p^{-\varepsilon} > p^{r/p(r)} \geq \max_{\substack{d<r\\ d|r}}\gcd(\tau, p^{d}-1).
  \]
    In particular, our result works for any $\tau>p^{r/p(r)+\varepsilon},$ while Shparlinski's bound in \cite{Shparlinski2004} is nontrivial if $\tau>p^{r/2-1/6+\varepsilon}.$ This is an improvement if $p(r)>2,$ more precisely when $r$ is odd.

\subsection{Main results}
We now quickly discuss the main results obtained in this article. In the list, our first result is  the following:
     
\begin{theorem}\label{thm:main1-1}
Let $f(z)$ be an eigenform with rational coefficients $a(n)$. Let $\cP$ be the set of primes $p$ such that $a(p^u)\neq 0$ for any $u \in \mathbb{N}.$
Then the following is true.
\begin{enumerate}  
\item[(i)] The set of primes $\cP$ satisfies that given $p \in \cP,$
    for any $0 < \varepsilon < 1/2$ there exists a $\delta=\delta(\varepsilon)>0$ such that the following estimate
     \begin{equation}\label{eq:main1}
    \max_{\xi \in \rF_{\ell}^*} \left|\sum_{n\le \tau}\el{\xi a(p^n) }\right| \leq 
    \tau \ell^{-\delta},
  \end{equation}
    holds for $\pi(y)+O(y^{2\varepsilon})$ many primes $\ell \leq y,$ where the least period $\tau$  of the linear recurrence sequence $\{a(p^n)\} \hspace{-0.1cm}\pmod\ell$, depends on both $p$ and $\ell$. Here $\pi(y)$ denotes the number of primes up to $y$, which is asymptotically equivalent to $\frac{y}{\log y}.$
\item[(ii)]  For the exceptional set of primes $p \notin \cP,$ let $u$ be the least natural number such that $a(p^u)=0.$ Then for any $ 0 <\varepsilon < 1/2,$ there exists a $\delta=\delta(\varepsilon)>0$ such that the following estimate
  \begin{equation}\label{eq:main111} 
    \max_{\xi \in \rF_{\ell}^*} \left|\sum_{n\le \tau}\el{\xi a(p^n) }\right| =
    \frac{\tau}{u+1}+O(\tau\ell^{-\delta}+u).
    \end{equation}
    holds for $\pi(y)+O(y^{2\varepsilon})$ many primes $\ell \leq y.$
    \end{enumerate}
    \end{theorem}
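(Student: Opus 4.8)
The plan is to realise $\{a(p^{n})\bmod\ell\}$ as a linear recurrence sequence of order at most two and to estimate the resulting complete exponential sum according to how its characteristic polynomial factors modulo $\ell$, discarding only $O(y^{2\varepsilon})$ primes $\ell\le y$ along the way. If $p\mid N$, then $a(p^{n})=a(p)^{n}$, and for every $\ell$ not dividing $N$ nor the denominator of $a(p)$ the sum in \eqref{eq:main1} is a Gauss sum $\sum_{h\in H}\el{\xi h}$ over the cyclic subgroup $H=\langle a(p)\bmod\ell\rangle\le\rF_{\ell}^{*}$ of order $\tau$; assuming $a(p)$ is not a root of unity, a Bourgain--Glibichuk--Konyagin type estimate yields \eqref{eq:main1} as soon as $\tau>\ell^{\varepsilon}$. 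If $p\nmid N$, then by \eqref{identity:primepowers} the sequence has order two with characteristic polynomial $\omega(x)=x^{2}-a(p)x+p^{k-1}$ and discriminant $D=a(p)^{2}-4p^{k-1}$, a fixed nonzero integer (nonzero because $p^{(k-1)/2}\notin\mathbb{Q}$ for even $k$); let $\alpha,\beta$ be its roots in $K=\mathbb{Q}(\sqrt{D})$, so that, after normalising $a(1)=1$, $a(p^{n})=(\alpha^{n+1}-\beta^{n+1})/(\alpha-\beta)$. I would first discard the $O_{p}(1)$ primes dividing $NpD$ and the denominator of $a(p)$; these include every $\ell$ modulo which $\omega$ acquires a repeated root.

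Next I would control the period. For $\ell$ of good reduction the least period $\tau$ equals $\mathrm{ord}_{\mathfrak{l}}(\alpha)$ for a prime $\mathfrak{l}\mid\ell$ of $K$, up to a bounded and generically harmless factor $\mathrm{ord}_{\ell}(\beta)$ in the split case. Since $\alpha$, $\beta$ and $\alpha/\beta$ are fixed algebraic numbers, none a root of unity --- here $p\in\cP$ is used, as $a(p^{u})=0$ for some $u\ge1$ is equivalent to $\alpha/\beta$ being a root of unity --- the elementary bound $\prod_{m\le z}\bigl|N_{K/\mathbb{Q}}(\gamma^{m}-1)\bigr|\le\exp\bigl(O_{p}(z^{2})\bigr)$ shows that at most $O_{p}(z^{2})$ primes $\ell\le y$ satisfy $\mathrm{ord}_{\mathfrak{l}}(\gamma)\le z$. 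Taking $z=y^{\varepsilon}$ and applying this with $\gamma\in\{\alpha,\beta,\alpha/\beta\}$, we may assume, outside an exceptional set of $O(y^{2\varepsilon})$ primes, that $\tau>y^{\varepsilon}\ge\ell^{\varepsilon}$, that $\mathrm{ord}_{\ell}(\alpha/\beta)>\ell^{\varepsilon}$, and --- when $\ell$ is inert in $K$ --- that $\gcd(\tau,\ell-1)=\tau/\mathrm{ord}_{\mathfrak{l}}(\beta/\alpha)<\tau\ell^{-\varepsilon}$, using $\alpha^{\ell-1}\equiv\beta/\alpha\pmod{\mathfrak{l}}$.

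For part (i) the estimate splits according to whether $D$ is a square modulo $\ell$. If $D$ is a nonzero square, then $\omega$ has two distinct roots in $\rF_{\ell}^{*}$ and $a(p^{n})=A\alpha^{n}+B\beta^{n}$ with $A,B\in\rF_{\ell}^{*}$, and I would invoke Bourgain's corollary \cite[Corollary]{Bourgain2005a}, which handles linear characteristic polynomials, its hypotheses being met for good $\ell$ because $\tau$, $\mathrm{ord}_{\ell}(\alpha)$, $\mathrm{ord}_{\ell}(\beta)$ and $\mathrm{ord}_{\ell}(\alpha/\beta)$ all exceed $\ell^{\varepsilon}$. If $D$ is a non-square, then $\omega$ is irreducible, $\beta=\alpha^{\ell}$ in $\rF_{\ell^{2}}$, and the crucial point is that $L(x):=(x-x^{\ell})/(\alpha-\alpha^{\ell})$ defines a surjective $\rF_{\ell}$-linear form $L\colon\rF_{\ell^{2}}\to\rF_{\ell}$ with $a(p^{n})=L(\alpha^{n+1})$; hence, up to $O(1)$,
\[
\sum_{n\le\tau}\el{\xi a(p^{n})}=\sum_{h\in H}\psi_{\xi}(h),\qquad H=\langle\alpha\rangle\le\rF_{\ell^{2}}^{*},\quad|H|=\tau,
\]
where $\psi_{\xi}(h)=\el{\xi L(h)}$ is a nontrivial additive character of $\rF_{\ell^{2}}$. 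This is a Gauss sum over a multiplicative subgroup of $\rF_{\ell^{2}}^{*}$ of order $\tau>\ell^{\varepsilon}$ that meets the unique proper subfield $\rF_{\ell}^{*}$ in only $\gcd(\tau,\ell-1)<\tau\ell^{-\varepsilon}$ elements, so a Bourgain-type sum--product estimate bounds it by $\tau\ell^{-\delta}$. Together with the case $p\mid N$ this proves~(i).

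For part (ii) (for $p\nmid N$), $p\notin\cP$ forces $\alpha/\beta$ to be a primitive $(u+1)$-th root of unity, so $a(p^{n})=\alpha^{n}w_{n}$ with $w_{n}=\bigl(1-(\beta/\alpha)^{n+1}\bigr)/(1-\beta/\alpha)$ depending only on $n\bmod(u+1)$ and vanishing exactly for $n\equiv u\pmod{u+1}$. For good $\ell$ the period is $\tau=\mathrm{lcm}\bigl(\mathrm{ord}_{\mathfrak{l}}(\alpha),u+1\bigr)$ with $(u+1)\mid\tau$; the $\tau/(u+1)$ indices $n$ with $a(p^{n})=0$ contribute $\tau/(u+1)$ to the sum, while on each of the remaining $u$ residue classes modulo $u+1$ the inner sum is a Gauss sum over a coset of $\langle\alpha^{u+1}\rangle$ of length $\tau/(u+1)>\ell^{\varepsilon}$ (after enlarging $z$ by the fixed factor $u+1$), hence $O\bigl((\tau/(u+1))\ell^{-\delta}\bigr)$; summing these and absorbing $O(u)$ boundary terms gives \eqref{eq:main111}. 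The main obstacle is the irreducible case of (i): one must recognise the exponential sum as a Gauss sum over a multiplicative subgroup of $\rF_{\ell^{2}}^{*}$ and then verify, for all but $O(y^{2\varepsilon})$ primes, the non-concentration bound $\gcd(\tau,\ell-1)<\tau\ell^{-\varepsilon}$ --- the analogue of condition~\textbf{a)} in~\eqref{eq:RootsOrder} --- which is precisely what unlocks the sum--product input.
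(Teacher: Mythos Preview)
Your proposal is correct and follows essentially the same approach as the paper: split according to $p\mid N$ or $p\nmid N$, then for $p\nmid N$ split the primes $\ell$ according to whether $\omega(x)=x^{2}-a(p)x+p^{k-1}$ splits or is irreducible modulo $\ell$, invoking \cite[Corollary]{Bourgain2005a} in the split case and the Bourgain--Chang estimate (the paper's Corollary~\ref{coro:ThmMain}) in the inert case, with the order/non-concentration hypotheses verified for all but $O(y^{2\varepsilon})$ primes via a resultant-type argument (your norm product $\prod_{m\le z}|N_{K/\mathbb{Q}}(\gamma^{m}-1)|$ is exactly the content of the paper's Lemma~\ref{Igor1}). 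One small point you elide in part~(ii): to see that the inner sums over cosets of $\langle\alpha^{u+1}\rangle$ are honest Gauss sums in $\rF_{\ell}^{*}$ (so that the one-variable Bourgain--Glibichuk--Konyagin bound applies directly), the paper checks that $u$ is odd, whence $\alpha^{u+1}=\pm p^{(u+1)(k-1)/2}\in\mathbb{Q}$; your formulation would otherwise need the $\rF_{\ell^{2}}$ version again.
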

    Roughly speaking, a newform of level $N$ is a normalized eigenform which is not a cuspform of level $N'$ for any proper divisor $N'$ of $N.$ For details and basics on modular forms, we refer the reader to~\cite{DS2005}. A newform is said to have complex multiplication ($CM$) by a quadratic Dirichlet character 
    $\phi$ if $f = f \otimes \phi$, where we define the twist as
    \[f \otimes \phi = \sum_{n=1}^{\infty} a(n)\phi(n)q^n.\] 
    In part $(i)$ of Theorem~\ref{thm:main1-1}, the condition $a(p^u)\neq 0$ holds for almost all prime $p$ provided that $f$ is a newform without $CM.$ This is a consequence of Sato-Tate conjecture and we shall discuss this again in the proof of Lemma~\ref{int:density1}. In particular, we have a non trivial estimate for the following exponential sum \begin{equation}\label{eq:main112} 
    \max_{\xi \in \rF_{\ell}^*} \left|\sum_{n\le \tau}\el{\xi a(p^n) }\right| \,.
    \end{equation}
    Let us recall that any general cusp form $f$ can be uniquely written as $\mathbb{C}$-linear combination of eigenforms. We call these eigenforms as components of $f$. We then have the following result.
    
     \begin{theorem}\label{thm:main1-2} Let $f(z)$ be a cusp form which is not necessarily an eigenform, and can be written as a $\mathbb{Q}$-linear combination of newforms with rational coefficients. Suppose that there are $r_2$ many components with CM, then under the assumption of GST hypothesis\footnote{See Section~\ref{se:gst} for the discussion about GST hypothesis.} there exist a set of primes $p$ with density at least $2^{-r_2}$ such that for any $0 < \varepsilon < 1/2$ there exists a $\delta=\delta(\varepsilon)>0$ for which the following estimate
     \begin{equation}\label{eq:main5}
    \max_{\xi \in \rF_{\ell}^*} \left|\sum_{n\le \tau}\el{\xi a(p^n) }\right| \leq 
    \tau \ell^{-\delta},
  \end{equation}
    holds for $c_f\pi(y)+O(y^{2\varepsilon})$ many primes $\ell \leq y,$ where $c_f>0$ is a constant. 
    \end{theorem}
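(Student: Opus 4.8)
The plan is to reduce the assertion to the single--eigenform analysis behind Theorem~\ref{thm:main1-1}, applied in parallel to all the components of $f$. Write $f=\sum_{i=1}^{r_1}c_ig_i+\sum_{j=1}^{r_2}d_jh_j$, where $g_1,\dots,g_{r_1}$ are the newform components of $f$ without CM and $h_1,\dots,h_{r_2}$ the components with CM, say by the quadratic characters $\phi_1,\dots,\phi_{r_2}$ attached to imaginary quadratic fields $K_1,\dots,K_{r_2}$; all components have rational (hence integral) coefficients and $c_i,d_j\in\mathbb{Q}^\times$. Put $t=r_1+r_2$ and write $a_m(\cdot)$ for the coefficients of the $m$th component. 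For every prime $p$ not dividing the level, identity~\eqref{identity:primepowers} shows each $\{a_m(p^n)\}_n$ is a linear recurrence of order $2$ with characteristic polynomial $\omega_{m,p}(x)=x^2-a_m(p)x+p^{k-1}$; hence $\{a(p^n)\}_n$, being a $\mathbb{Q}$--linear combination of these, is a linear recurrence whose characteristic polynomial divides $\Omega_p(x):=\prod_m\omega_{m,p}(x)\in\mathbb{Z}[x]$, monic of degree $2t$ and constant term $p^{t(k-1)}$.

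First I would fix a good set of primes $p$. The GST hypothesis (see Section~\ref{se:gst}) is precisely what supplies the joint equidistribution needed to conclude that the set of $p$ with $\phi_j(p)=+1$ for every CM component $h_j$ has density at least $2^{-r_2}$, the inequality absorbing possible coincidences among the $K_j$; on that set each $a_{h_j}(p)$ is nonzero off a density--zero set. Intersecting with the density--one set of primes on which $a_m(p)\ne 0$ for all $m$, the $t$ numbers $a_m(p)$ are pairwise distinct, and no discriminant $a_m(p)^2-4p^{k-1}$ is a rational square (automatic for $p$ large by Ramanujan's bound), I obtain a set $\cP'$ of density $\ge 2^{-r_2}$. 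For $p\in\cP'$ the quadratics $\omega_{m,p}$ are pairwise coprime (two distinct monic quadratics with equal constant term share no root, as any common root would have to be $0$), each separable with two nonzero roots; and an amplitude check — each root of $\omega_{m,p}$ genuinely occurs in $\{a_m(p^n)\}_n$ because $a_m(1)=1$, and each $c_i,d_j\ne 0$ — shows that $\Omega_p$ is exactly the minimal polynomial of $\{a(p^n)\}_n$, of degree $2t$ with distinct nonzero roots.

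Next, for a fixed $p\in\cP'$, I would study $\Omega_p\bmod\ell$ as $\ell$ ranges over primes up to $y$, discarding the finitely many $\ell$ dividing $p\cdot\mathrm{disc}(\Omega_p)$ or a common denominator of the $c_i,d_j$. Each $\omega_{m,p}\bmod\ell$ is either irreducible of degree $2$ or a product of two distinct linear factors, and which occurs is governed by the value at $\ell$ of a quadratic character: that of $a_{g_i}(p)^2-4p^{k-1}$ for a non--CM component, and that of the discriminant of $K_j$ (equivalently $\phi_j(\ell)$) for a CM component with $\phi_j(p)=+1$. By Chebotarev applied to the compositum of the corresponding quadratic fields, the set of primes $\ell$ on which all $\omega_{m,p}$ split is of positive density $2^{-s}$, where $s\le t$ is the $\mathbb{F}_2$--dimension of the subgroup of $\mathbb{Q}^\times/(\mathbb{Q}^\times)^2$ generated by these discriminants; take $c_f=2^{-s}$ (one may enlarge $c_f$ by also using the set on which all $\omega_{m,p}$ are inert, whenever that set is nonempty). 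On the totally split set, $\Omega_p\bmod\ell$ is a product of $2t$ distinct linear factors and I would invoke Bourgain's corollary~\cite{Bourgain2005a}, the $r=1$ case of Theorem~\ref{Thm:Main}; on the totally inert set it is a product of $t$ distinct irreducible quadratics with nonzero constant term $p^{t(k-1)}\in\rF_\ell$, and I would invoke Theorem~\ref{Thm:Main} with $r=2$, where condition~(b) there — the isomorphism $\rF_\ell(\alpha_i)\cong\rF_\ell(\alpha_j)$ — is automatic since all factors have degree $2$. In either regime every root $\alpha$ of $\Omega_p\bmod\ell$ satisfies the norm relation $\alpha^{\ell+1}=p^{k-1}$ in its residue field, exactly as for a single eigenform, so the order hypotheses — (a) and (b) of Theorem~\ref{Thm:Main}, or their analogues in~\cite{Bourgain2005a} — fail for at most $O(y^{2\varepsilon})$ of the relevant $\ell\le y$, by the same counting of primes for which the multiplicative order of a fixed algebraic integer, or a $\gcd$ of two such orders, is abnormally small that underlies Theorem~\ref{thm:main1-1}. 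This delivers~\eqref{eq:main5} for $c_f\pi(y)+O(y^{2\varepsilon})$ primes $\ell\le y$.

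The main obstacle is the first step: extracting from the GST hypothesis a set of primes $p$ of density $\ge 2^{-r_2}$ on which the $r_2$ CM components are simultaneously non--degenerate (that is $\phi_j(p)=+1$ for every $j$), and arranging this compatibly with the genericity conditions that allow one to identify $\Omega_p$ with the minimal polynomial of $\{a(p^n)\}_n$. Once $p$ is fixed the rest — the Chebotarev count producing the density $c_f$, and the reduction of the order conditions to the estimates already established for Theorem~\ref{thm:main1-1} — is essentially bookkeeping across the $t$ quadratic factors.
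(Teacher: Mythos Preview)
Your architecture matches the paper's --- fix a good set of primes $p$, then for each such $p$ restrict to the positive-density set of $\ell$ where every $\omega_{m,p}$ splits and invoke Bourgain's corollary --- but the conditions you impose on $\cP'$ are too weak for the final step, and this is exactly where you have misplaced the role of GST. To carry out ``the same counting'' (the resultant argument behind Lemma~\ref{Igor1} and Lemma~\ref{lem:mod form}) you need every ratio $\alpha^{(i,p)}(\beta^{(j,p)})^{-1}$ and $\alpha^{(i,p)}(\alpha^{(j,p)})^{-1}$ to avoid being a root of unity in $\overline{\mathbb{Q}}$; otherwise the relevant resultant is identically zero and the count says nothing. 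Writing $\alpha^{(m,p)}=p^{(k-1)/2}e^{i\theta_{m,p}}$, this amounts to $\theta_{i,p}\pm\theta_{j,p}\notin\mathbb{Q}\pi$ for all $i,j$, which is strictly stronger than ``the $a_m(p)$ are nonzero and pairwise distinct'' (the latter only excludes $\theta_{i,p}=\pm\theta_{j,p}$). The paper imposes exactly the stronger condition via the set $\mathcal{S}_2$, and Lemma~\ref{int:density1} uses GST to show $\mathcal{S}_2$ has density $2^{-r_2}$. By contrast, the density $\ge 2^{-r_2}$ for $\{p:\phi_j(p)=+1\ \forall j\}$ is plain Chebotarev and needs no joint equidistribution; GST enters only to kill the measure-zero set where the cross-angle relations $\theta_{i,p}\pm\theta_{j,p}\in\mathbb{Q}\pi$ hold.

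A secondary point: your optional inert case via Theorem~\ref{Thm:Main} does not work. Every root $\alpha$ of an inert $\omega_{m,p}$ satisfies $\alpha^{\ell+1}=p^{k-1}$ in $\rF_\ell$, so the order of $p^{k-1}$ in $\rF_\ell^*$ divides every $\tau_m$ and hence $\gcd(\tau_i,\tau_j)$; since that order exceeds $\ell^{\varepsilon}$ for all but $O(y^{2\varepsilon})$ primes $\ell\le y$, condition~\textbf{b)} of Theorem~\ref{Thm:Main} \emph{fails} for almost all inert $\ell$, and the inert set cannot be used to enlarge $c_f$. The paper accordingly works only with the split set $\mathcal{S}_1$.
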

 In both of the theorems above, we took a fixed prime $p$ and looked for primes $\ell$ for which a non trivial estimate to~(\ref{eq:main112}) holds. However, these results are valid for almost all prime $\ell,$ and we do not know explicitly which of the primes are being excluded in this process. So one may now naturally ask, what if we now fix a prime $\ell$ and find out for how many primes $p$ the sum at (\ref{eq:main112}) is non trivial. In this regard, we have the following results.
   
\begin{theorem}\label{thm:main2-1} 
 Let $f(z)$ be a newform of weight $k,$ without CM, and with integer Fourier coefficients. Consider the set $\mathfrak{P}=\left\{\ell~\mathrm{prime} \mid (k-1,\ell-1)=1\right\}.$ Then for any fixed $\varepsilon>0$ and any large enough $\ell \in \mathfrak{P},$ the set of primes $p$ satisfying
  \begin{equation}\label{eq:hh}
    \max_{\xi \in \rF_{\ell}^*} \left|\sum_{n\le \tau}\el{\xi a(p^n) }\right|\le \tau \ell^{-\delta} 
  \end{equation}
    have density at least $1+O_{\varepsilon}\left(\frac{1}{\ell^{1-3\varepsilon}}\right),$ where $\delta=\delta(\varepsilon)$ is same as in Theorem~\ref{Thm:Main}. 
    \end{theorem}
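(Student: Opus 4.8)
The plan is to index the primes $p$ by the conjugacy class of $\mathrm{Frob}_p$ in the mod-$\ell$ image of the Galois representation attached to $f$, to show that~\eqref{eq:hh} holds whenever this class avoids a thin union of conjugacy classes of $\mathrm{GL}_2(\rF_\ell)$, and then to count that thin set.

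\emph{Step 1 (set-up).} After discarding the finitely many $p\mid N\ell$, the sequence $\{a(p^n)\}\pmod\ell$ is the linear recurrence of order $\le 2$ with characteristic polynomial $\omega_p(x)=x^2-a(p)x+p^{k-1}\in\rF_\ell[x]$, and by Deligne--Serre one has $\bigl(a(p)\bmod\ell,\, p^{k-1}\bmod\ell\bigr)=\bigl(\mathrm{tr}\,g_p,\, \det g_p\bigr)$ with $g_p=\rho_f^{(\ell)}(\mathrm{Frob}_p)$. Since $f$ is a non-CM newform with integer coefficients, $\mathrm{Im}\,\rho_f^{(\ell)}\supseteq\mathrm{SL}_2(\rF_\ell)$ for all but finitely many $\ell$ (a theorem of Serre and Ribet); as $\ell\in\mathfrak{P}$ the $(k-1)$-st power map is a bijection of $\rF_\ell^\times$, so $\det\bigl(\mathrm{Im}\,\rho_f^{(\ell)}\bigr)=(\rF_\ell^\times)^{k-1}=\rF_\ell^\times$ and therefore $\mathrm{Im}\,\rho_f^{(\ell)}=\mathrm{GL}_2(\rF_\ell)$. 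By Chebotarev's density theorem the density of primes $p$ with $g_p$ in a conjugation-stable set $S\subseteq\mathrm{GL}_2(\rF_\ell)$ equals $|S|/|\mathrm{GL}_2(\rF_\ell)|$, so it suffices to produce a good set of conjugacy classes of density $1+O_\varepsilon(\ell^{-1+3\varepsilon})$.

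\emph{Step 2 (good classes).} Fix $0<\varepsilon'<\varepsilon$ and split by the type of $g=g_p$. If $g$ has a repeated eigenvalue $\lambda$ — then $\lambda\ne0$ since $\lambda^2=\det g\ne0$ — one computes $a(p^n)\equiv(n+1)\lambda^n$, whose least period is $\ell\,\mathrm{ord}(\lambda)$; writing $n=q\,\mathrm{ord}(\lambda)+r$ and carrying out the complete geometric sum in $q$, which vanishes because $\gcd(\mathrm{ord}(\lambda),\ell)=1$, shows that the sum in~\eqref{eq:hh} is identically $0$. If $\omega_p$ is irreducible over $\rF_\ell$, with root $\alpha\in\rF_{\ell^2}^\times$ and $\mathrm{ord}(\alpha)>\ell^{1+\varepsilon}$, then $\gcd(\mathrm{ord}(\alpha),\ell-1)\le\ell-1<\mathrm{ord}(\alpha)\ell^{-\varepsilon}$, so the coprimality condition~\eqref{eq:conditions} holds with $r=2$ and Shparlinski's Theorem~3.1 of~\cite{Shparlinski2004} (the $\nu=1$ case refined by Theorem~\ref{Thm:Main}) gives~\eqref{eq:hh}, its error term being negligible since $\mathrm{ord}(\alpha)>\ell^{5/6+\varepsilon}$. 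If $\omega_p=(x-\alpha)(x-\beta)$ with $\alpha\ne\beta$ in $\rF_\ell^\times$, then $a(p^n)=A\alpha^n+B\beta^n$ with $A,B\in\rF_\ell^\times$, and provided $\mathrm{ord}(\alpha)$, $\mathrm{ord}(\beta)$ and $\mathrm{ord}(\alpha\beta^{-1})$ all exceed $\ell^{\varepsilon'}$, Bourgain's Corollary in~\cite{Bourgain2005a} — the $r=1$ case generalised by Theorem~\ref{Thm:Main} — yields~\eqref{eq:hh}. Thus $g$ is good except possibly when $\omega_p$ is separable and either $\mathrm{ord}(\alpha)\le\ell^{1+\varepsilon}$ (elliptic case) or $\min\{\mathrm{ord}(\alpha),\mathrm{ord}(\beta),\mathrm{ord}(\alpha\beta^{-1})\}\le\ell^{\varepsilon'}$ (split case).

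\emph{Step 3 (counting and conclusion).} A cyclic group of order $m$ has at most $\sum_{e\mid m,\, e\le M}\varphi(e)\le M\,d(m)$ elements of order $\le M$, where $d(\cdot)$ is the divisor function, and $d(\ell^2-1)=\ell^{o(1)}$. Hence at most $\ell^{1+\varepsilon+o(1)}$ elements of $\rF_{\ell^2}^\times$ have order $\le\ell^{1+\varepsilon}$, contributing at most that many elliptic conjugacy classes, each of size $\ell^2-\ell$, i.e.\ at most $\ell^{3+\varepsilon+o(1)}$ matrices; in the split case each of the three small-order conditions confines one element to a set of size $\ell^{\varepsilon'+o(1)}$ with $\det g$ free, giving at most $\ell^{1+\varepsilon'+o(1)}$ bad pairs $(\mathrm{tr}\,g,\det g)$, hence at most $\ell^{1+\varepsilon'+o(1)}$ split classes of size $\ell^2+\ell$, again at most $\ell^{3+\varepsilon+o(1)}$ matrices. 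Since $|\mathrm{GL}_2(\rF_\ell)|=\ell(\ell-1)^2(\ell+1)\asymp\ell^4$, the bad primes have density $\le\ell^{-1+\varepsilon+o(1)}\le\ell^{-1+3\varepsilon}$ for $\ell$ large, so the set in~\eqref{eq:hh} has density $\ge 1-\ell^{-1+3\varepsilon}=1+O_\varepsilon(\ell^{-1+3\varepsilon})$, with $\delta=\delta(\varepsilon)$ as in Theorem~\ref{Thm:Main}.

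\emph{Main obstacle.} The delicate point is the split case in Step~2: one must verify that Bourgain's estimate for exponential sums over multiplicative subgroups of $\rF_\ell^\times$ applies to $A\alpha^n+B\beta^n$ under \emph{only} lower bounds on $\mathrm{ord}(\alpha),\mathrm{ord}(\beta),\mathrm{ord}(\alpha\beta^{-1})$, because any hypothesis requiring, say, $\gcd(\mathrm{ord}(\alpha),\mathrm{ord}(\beta))$ to be small would be violated by a positive proportion of split $g$ and would destroy the density estimate. The other ingredients — the identity $a(p^n)\equiv(n+1)\lambda^n$ with its vanishing sum, the large-image and Chebotarev facts, and the divisor bound $d(\ell^2-1)=\ell^{o(1)}$ — are routine, but the bookkeeping of the various $\varepsilon$'s has to be arranged so that the final exponent is exactly $-1+3\varepsilon$.
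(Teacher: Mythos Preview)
Your argument is correct and follows essentially the same route as the paper: Ribet's large-image theorem together with $(k-1,\ell-1)=1$ gives surjectivity of $\rho_{f,\ell}$ onto $\mathrm{GL}_2(\rF_\ell)$, Chebotarev converts the problem to counting conjugacy classes whose eigenvalues and eigenvalue-ratio have large multiplicative order, and the divisor bound $d(\ell^2-1)=\ell^{o(1)}$ controls the bad set.

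Two small differences are worth recording. First, your explicit treatment of the repeated-eigenvalue case --- showing $a(p^n)\equiv(n+1)\lambda^n$ and that the complete sum over a period vanishes --- is a nice extra the paper does not include; the paper simply leaves those classes outside its good set $C$, where they contribute only $O(\ell^{-2})$ to the density. Second, in the elliptic case you impose $\mathrm{ord}(\alpha)>\ell^{1+\varepsilon}$, whereas the paper uses the sharper pair of conditions $\mathrm{ord}(\alpha)>\ell^\varepsilon$ and $\mathrm{ord}(\alpha^{\ell-1})>\ell^\varepsilon$; your condition implies theirs (since $\gcd(\tau,\ell-1)\le\ell-1$) and both lead to at most $\ell^{1+\varepsilon+o(1)}$ bad $\alpha$'s, so the final exponent is the same. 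A minor quibble: the right reference for the elliptic bound with the stated $\delta$ is Corollary~\ref{coro:ThmMain} (equivalently Bourgain--Chang), not Shparlinski's Theorem~3.1, though as you note parenthetically Theorem~\ref{Thm:Main} subsumes both and either yields some $\delta(\varepsilon)>0$.
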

Intuitively, we can regard this theorem as the inverse of Theorem~\ref{thm:main1-1}, and in this analogy, we have the following result which can be regarded as the inverse of Theorem~\ref{thm:main1-2}. Just for the sake of simplicity we are assuming $(k-1,\ell-1)=1,$ which can be easily avoided and we will make it evident from the proof of the following theorem.
    \begin{theorem}\label{thm:main2-2} 
If $f(z)$ is a cuspform, and can be written as $\mathbb{Q}$ linear combination of $r$ many newforms without CM and with integer coefficients, such that all of these components satisfies GST hypothesis. Then for any fixed $\varepsilon>0$ and large enough $\ell,$ the set of primes $p$ satisfying
  \begin{equation}\label{eq:hh1}
    \max_{\xi \in \rF_{\ell}^*} \left|\sum_{n\le \tau}\el{\xi a(p^n) }\right|\le \tau \ell^{-\delta} 
  \end{equation}
    have density at least $2^{-r}+O_{\varepsilon}\left(\frac{1}{\ell^{1-2\varepsilon}}\right),$ where $\delta=\delta(\varepsilon)$ is same as in Theorem~\ref{Thm:Main}. 
    \end{theorem}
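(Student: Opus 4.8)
The plan is to deduce the estimate from Theorem~\ref{Thm:Main} applied to the linear recurrence sequence $\{a(p^n)\bmod\ell\}$, so that the whole problem reduces to counting, via the GST hypothesis, the primes $p$ for which the hypotheses of that theorem are met. Write $f=\sum_{j=1}^r c_jg_j$ with $c_j\in\mathbb Q^{*}$ and $g_j$ pairwise non-isomorphic non-CM newforms with integer coefficients of common weight $k$; discard the finitely many $p$ dividing a level. Since $a_{g_j}(p^n)=(\alpha_j^{\,n+1}-\beta_j^{\,n+1})/(\alpha_j-\beta_j)$ with $\alpha_j,\beta_j$ the roots of $\psi_{j,p}(x):=x^2-a_{g_j}(p)x+p^{k-1}$, the sequence $a(p^n)=\sum_j c_ja_{g_j}(p^n)$ is a linear recurrence whose characteristic polynomial $\omega_p$ divides $\prod_{j=1}^r\psi_{j,p}$, with equality for all $p$ outside a set of density $O(\ell^{-1})$ (those for which two quadratics coincide mod $\ell$, one is inseparable, or an eigenvalue coefficient vanishes), and $(\omega_p(0),\ell)=(p^{k-1},\ell)=1$. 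To put ourselves in the scope of Theorem~\ref{Thm:Main} we need all irreducible factors of $\omega_p$ over $\mathbb F_\ell$ to share a common degree, which I secure by restricting to the primes $p$ with every $\psi_{j,p}$ irreducible over $\mathbb F_\ell$ (equivalently every $a_{g_j}(p)^2-4p^{k-1}$ a non-square mod $\ell$); then $\omega_p$ is a product of $r$ distinct irreducible quadratics and Theorem~\ref{Thm:Main} applies with its internal parameter $r$ equal to $2$.

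I would then compute the density of this set from the GST hypothesis for $g_1,\dots,g_r$, which provides equidistribution of the classes $\mathrm{Frob}_p$ in the image $G_\ell\subseteq\mathrm{GL}_2(\mathbb F_\ell)^{r}$ of $\mathrm{Gal}(\overline{\mathbb Q}/\mathbb Q)$ acting through $\bigoplus_j\bar\rho^{(\ell)}_{g_j}$, and the largeness of $G_\ell$ --- for pairwise non-isomorphic non-CM $g_j$ it is the fibre product of the $r$ copies of $\mathrm{GL}_2(\mathbb F_\ell)$ over the common determinant $p^{k-1}$. By the Chebotarev density theorem the density of $p$ with a conjugation-invariant condition on $\mathrm{Frob}_p$ equals the proportion of $G_\ell$ satisfying it; as the proportion of matrices in $\mathrm{GL}_2(\mathbb F_\ell)$ of prescribed determinant with irreducible characteristic polynomial is $\tfrac12+O(\ell^{-1})$ and the $r$ factors of the fibre product are independent given the determinant, the density of $p$ making every $\psi_{j,p}$ irreducible is $2^{-r}+O(\ell^{-1})$. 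If $(k-1,\ell-1)=d>1$ the determinant ranges only over $d$-th powers, which rescales $G_\ell$ by an explicit factor but leaves this proportion $2^{-r}(1+O(\ell^{-1}))$; this is the sole use of the simplifying hypothesis and can be absorbed into the error term.

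Next I would check conditions (a) and (b) of Theorem~\ref{Thm:Main} on all but a thin subset of these $p$. Writing $\tau_j=\mathrm{ord}(\alpha_j)$ in $\mathbb F_{\ell^2}^{*}$, one has $\alpha_j^{\ell+1}=p^{k-1}$, so condition (a) at $j$ amounts to the $(\ell+1)$-part of $\tau_j$ exceeding $\ell^{\varepsilon}$ (the only competing gcd being with $\ell-1$), and condition (b) asks for a pair with $\gcd(\tau_i,\tau_j)<\ell^{\varepsilon'}$, the isomorphism $\mathbb F_\ell(\alpha_i)\cong\mathbb F_\ell(\alpha_j)$ being automatic since both equal $\mathbb F_{\ell^2}$. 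Using the joint equidistribution of $(\alpha_1,\dots,\alpha_r)$ over the relevant coset of the norm-one subgroup of $\mathbb F_{\ell^2}^{*}$ (again from GST and Chebotarev), together with the elementary estimate that $\mathbb F_{\ell^2}^{*}$ has at most $X^{1+o(1)}$ elements of order $\le X$ and a divisor-sum bound for elements whose orders share a prescribed large factor, I would show that the $p$ above failing (a) or (b) form a set of density $O_\varepsilon(\ell^{-1+\varepsilon})$; with $\varepsilon'=\varepsilon/2$ the surviving set has density $\ge 2^{-r}-O_\varepsilon(\ell^{-(1-2\varepsilon)})=2^{-r}+O_\varepsilon(\ell^{-(1-2\varepsilon)})$, and for each such $p$ Theorem~\ref{Thm:Main} yields \eqref{eq:hh1} with the same $\delta=\delta(\varepsilon)$.

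The step I expect to be the genuine obstacle is condition (b): since every Frobenius eigenvalue $\alpha_j$ has norm $\alpha_j^{\ell+1}=p^{k-1}$, the orders $\tau_j$ all share the factor $\mathrm{ord}_{\mathbb F_\ell^{*}}(p^{k-1})$, typically of size $\asymp\ell$, so a pair with $\gcd(\tau_i,\tau_j)$ truly small exists only on a thin set of $p$ unless one argues more delicately --- for instance by tracking the $(\ell+1)$-components of the $\tau_j$, which GST-equidistribution spreads out, or by falling back on the parallel route in which one restricts instead to the $p$ where every $\psi_{j,p}$ \emph{splits} and invokes Bourgain's corollary \cite{Bourgain2005a}, whose analogue of (b) is the multiplicative independence of the ratios $\alpha_i/\alpha_j$ and $\alpha_i/\beta_j$ and is satisfied generically. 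Turning one of these alternatives into the full density $2^{-r}$, rather than a thinner set, is the crux of the argument; the residual dependence on $(k-1,\ell-1)$, exactly as in Theorems~\ref{thm:main1-1} and~\ref{thm:main2-1}, is only bookkeeping.
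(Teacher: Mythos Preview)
Your irreducible route runs into a genuine wall, exactly where you flag it: since $\alpha_j^{\ell+1}=\alpha_j\beta_j=p^{k-1}$ for every $j$, the integer $\mathrm{ord}_{\mathbb F_\ell^{*}}(p^{k-1})$ divides every $\tau_j$, and this common factor exceeds $\ell^{\varepsilon'}$ for all but $O_{\varepsilon'}(\ell^{2\varepsilon'})$ residues $p\bmod\ell$. Condition~\textbf{(b)} of Theorem~\ref{Thm:Main} demands a pair with \emph{small} $\gcd(\tau_i,\tau_j)$, so it fails on essentially the whole irreducible locus; passing to the $(\ell+1)$-components does not help, because~\textbf{(b)} is a condition on the full orders and Theorem~\ref{Thm:Main} is being used as a black box.

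The paper takes precisely your fallback. It restricts to those $p$ for which every $\psi_{j,p}$ \emph{splits} over $\mathbb F_\ell$ and the $2r$ eigenvalues $a_1,\dots,a_{2r}\in\mathbb F_\ell^{*}$ together with all pairwise ratios $a_ia_j^{-1}$ have order $>\ell^{\varepsilon}$; these are the hypotheses of Bourgain's corollary \cite{Bourgain2005a} (the degree-one analogue of Theorem~\ref{Thm:Main}), which then gives \eqref{eq:hh1}. The count is elementary: tuples in $(\mathbb F_\ell^{*})^{2r}$ with $a_1a_2=\cdots=a_{2r-1}a_{2r}$ a $(k-1)$-th power number $(\ell-1)^{r+1}/(k-1,\ell-1)$, of which $O_\varepsilon(\ell^{r+2\varepsilon}/(k-1,\ell-1))$ violate an order condition; each surviving tuple contributes a conjugacy class $\prod_{i\text{ odd}}C_{a_i,a_{i+1}}$ of size $(\ell(\ell+1))^r/2^r$ in $\Delta_r^{(k-1)}(\ell)$, and dividing by $|\Delta_r^{(k-1)}(\ell)|$ gives $2^{-r}+O_\varepsilon(\ell^{-(1-2\varepsilon)})$. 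One point you gloss is the role of GST: it is \emph{not} an equidistribution input (that is plain Chebotarev once the image is known), but is spent solely on proving that the image of $\bigoplus_j\rho_{f_j,\ell}$ is the full fibre product $\Delta_r^{(k-1)}(\ell)$ for all large $\ell$. Ribet gives surjectivity of each factor; if the product image were strictly smaller, a Goursat-type lemma \cite[Lemma~5.1]{MW} forces a quadratic-twist relation $\rho_{f_i,\ell}\simeq\chi\otimes\rho_{f_j,\ell}$, hence $\{\alpha^{(i,p)}_\ell,\beta^{(i,p)}_\ell\}=\pm\{\alpha^{(j,p)}_\ell,\beta^{(j,p)}_\ell\}$ for every $p\nmid N\ell$, and GST (through the resultant argument of Lemma~\ref{lem:mod form}) supplies primes $p$ contradicting this for all large $\ell$.
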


\section{Exponential sums with linear recurrence sequences}\label{KB}
    In this section, our main goal is to prove Theorem~\ref{Thm:Main}, which is one of our key tool in 
    establishing several important results of this article. We already noticed, in Section~\ref{se:lrs}, that condition {\bf{a)}} 
    of Theorem~\ref{Thm:Main} is essential. Now, we illustrate with an example that all of the $\gcd(\tau_i,\tau_j)'s$ cannot be too large. 
    For example, let $r=2$ and $g$ be a generator of $\rF_{\ell^2}^*.$
    Then, consider the sequence
  \[
    s_n=\tr{g^{n(\ell^2+1)/2} - g^n},
  \]  
    with characteristic polynomial   $(x-g)(x-g^{\ell})(x-g^{(\ell^2+1)/2})(x-g^{\ell(\ell^2+1)/2})$. Note that 
    \[\tau_2=\ord{g}=\ell^2-1~\mathrm{and}~\tau_1 = \ord{g^{(\ell^2+1)/2}}=\tfrac{\ell^2-1}{\gcd(\ell^2-1,(\ell^2+1)/2)}.
  \]
    It is easy to see that $\gcd(\ell^2-1,(\ell^2+1)/2)=1$ 
    or $2,$
    therefore
  \[
    \gcd(\tau_1,\tau_2)=\begin{cases}
      \ell^2-1 & \textrm{if }\, \gcd(\ell^2-1,(\ell^2+1)/2)=1\\
      (\ell^2-1)/2 & \textrm{if }\, \gcd(\ell^2-1,(\ell^2+1)/2)=2
    \end{cases}.
  \]     
     Then, one can show that
  \begin{align*}
     \sum_{n =1}^{\ell^2 -1} \el{s_n} &= \sum_{n=1}^{\ell^2 -1} \el{\tr{g^{n(\ell^2+1)/2} - g^n}} \\
          &= \frac{\ell^2-1}{2} + \sum_{n=1}^{(\ell^2 -1)/2} \el{\tr{-2g^{2n+1}}}.
  \end{align*}
  Noting that $\ord{g^2}= (\ell^2-1)/2$ and using \cite[Theorem 3.2]{Kowalski}, we have
  \[
    \left| \sum_{n=1}^{(\ell^2 -1)/2} \el{\tr{-2g^{2n+1}}}\right| \le
    \max_{\xi \in \rF_{\ell^2}^*} \left| \sum_{n=1}^{(\ell^2 -1)/2} \el{\tr{\xi g^{2n}}}\right| = O(\ell).
  \]    
    Therefore, the linear recurrence sequence 
     $\{s_n\}$ satisfies
  \[
    \sum_{n =1}^{\ell^2 -1} \el{s_n} = \frac{\ell^2-1}{2} + O(\ell).
  \]     
  We now need to discuss some necessary background. Let $K$ be a finite field of 
    characteristic $p$ and $F$ be an extension of $K$ with $[F:K]=r.$ The {\it trace} 
    function  $\Tr{F/K} : F \to K$ is defined by
  \[
    \Tr{F/K}(z)= z + z^p + \cdots +z^{p^{r-1}}, \qquad z\in F.
  \]  
    The following properties of $\Tr{F/K}(z)$ are well known.
       \begin{align}
            \Tr{F/K}(az + w) & = a\Tr{F/K}(z) + \Tr{F/K}(w), \quad \textrm{for all } a \in K, \, z,w\in F. \label{eq:TraceLinear} \\
            \Tr{F/K}(a) &=ra, \quad \textrm{for any} \quad a \in K. \label{eq:TraceScalar} \\
            \Tr{F/K}(z^p) & = \Tr{F/K}(z), \quad \textrm{for any} \quad z \in F. \label{eq:TracePowers}
    \end{align}
      Throughout this section, $F=\rF_q$, $K=\rF_p$ with $q=p^r$ and we will simply write $\tr{z}$ 
      instead $\Tr{F/K}(z)$.

   Let $\{s_n\}$ be a linear recurrence sequence of order $r\ge 1$ in $\rF_p$ with 
    characteristic polynomial $\omega(x)$ in $\rF_p[x].$ 
    It is well known that $n^{th}$-term can be written in terms of the roots 
    of the characteristic polynomial, see Theorem 6.21 in~\cite{Lidl-Niederreiter}. Therefore, if  the roots $\alpha_0, \ldots, \alpha_{r-1}$
    of $\omega(x)$ are all distinct in its splitting field, then 
  \begin{equation}\label{eq:RootsPowers}
    s_n= \sum_{i=0}^{r-1} \beta_i \alpha_i^n, \quad \textrm{for }\, n=0,1,2,\ldots,
  \end{equation}
    where $\beta_0,\ldots,\beta_{r-1}$ are uniquely determined by initial values $s_0, \ldots, s_{r-1},$
    and belong to the splitting field of $\omega(x)$ over $\rF_p.$

    If the characteristic polynomial $\omega(x)$ is irreducible and $\alpha$ is a root, then its 
    $r$ distinct conjugates are
  \[
     \alpha, \alpha^{p}, \ldots, \alpha^{p^{r-2}}, \alpha^{p^{r-1}}.
  \]        
    Hence, the coefficients $s_n$ are given by 
  \begin{equation*}\label{eq:irreducibleS_n}
      s_n=\sum_{i=0}^{r-1}\beta_i \alpha^{p^in}, \qquad n=0,1,2,3,\ldots \, .
  \end{equation*} 
    
   One of our main tools is the bound for Gauss sum in finite fields given by
   Bourgain and Chang~\cite[Theorem 2]{BourgainChang2006}. This will be required to prove Theorem~\ref{Thm:Main}. 
       Assume that for a given $\alpha \in \rF_{q}$ 
     such that $t=\ord{\alpha}$ satisfies
   \begin{equation}\label{eq:BourgainChang}
      t > p^{\varepsilon} \quad \textrm{and} \quad \max_{\substack{1\le d < r \\ d |r}} 
      \gcd (t, p^d - 1) < t p^{-\varepsilon}.
   \end{equation}
      Then there exists a $\delta=\delta(\varepsilon)>0$ such that for any nontrivial additive character $\psi$ of $\rF_q,$ we have
   \[
      \left|\sum_{n \le t} \psi(\alpha^n) \right| \le t p^{-\delta}.
   \]
     Here, we note that the second assumption in \eqref{eq:BourgainChang} implies the first one whenever $r\ge 2$. 

\subsection{Proof of Theorem~\ref{Thm:Main}}
    We proceed by induction over $\nu.$ Before that,
    following properties \eqr{TraceLinear} and \eqr{TraceScalar} of trace function we get
    {\small
  \begin{align*}
     s_{n}& =\tr{r^{-1}s_{n}}=r^{-1}\tr{\sum_{i=1}^{\nu} (\beta_{i,0}\alpha_i^n + 
      \cdots + \beta_{i,t_i-1}\alpha_i^{p^{t_i-1}n})} 
       =r^{-1} \sum_{i=1}^{\nu}  \sum_{j=0}^{t_i-1} \tr{ \beta_{i,j} \alpha_i^{p^{j}n}}.
  \end{align*}     }
    Set $r=[\rF_p(\alpha_1, \ldots, \alpha_{\nu}):\rF_p],$ then $z^{p^r}=z$ for any 
    $z \in \rF_p(\alpha_1, \ldots, \alpha_{\nu}),$ in particular $\tr{z^{p^u}}=\tr{z}.$
    Then for each pair $(i,j),$ raising each argument $\beta_{i,j}\alpha_i^{p^jn}$
    to the power $p^{r-j}$ 
  \[
    \tr{\beta_{i,j}\alpha_i^{p^jn}}=\tr{\beta_{i,j}^{p^{r-j}}\alpha_i^{p^jn \cdot p^{r-j}}}=
    \tr{\beta_{i,j}^{p^{r-j}}\alpha_i^{p^rn}} = \tr{\beta_{i,j}^{p^{r-j}}\alpha_i^{n}}.
  \]
    This implies that 
  \begin{align}\label{eq:s_nTrace}
    s_{n} & = r^{-1} \sum_{i=1}^{\nu}  \sum_{j=0}^{t_i-1} \tr{ \beta_{i,j}^{p^{r-j}} \alpha_i^{n}}
            = r^{-1} \sum_{i=1}^{\nu} 
            \tr{ \left(\sum_{j=0}^{t_i-1} \beta_{i,j}^{p^{r-i}}\right)\alpha_i^{n}} \nonumber \\
          & = 
             \tr{\gamma_1 \alpha_1^n }+ \cdots + \tr{\gamma_{\nu} \alpha_{\nu}^n},
         \end{align}
     where $\gamma_i = r^{-1}\sum_{j=0}^{t_i-1} \beta_{i,j}^{p^{r-i}},$ for each $1 \le i \le \nu.$ 
     
    The case $\nu = 1$ follows by Bourgain and Chang \cite[Theorem 2]{BourgainChang2006}. 
    We shall now proceed inductively, and $\nu=2$ will be the base case. We start by denoting
    $h=\mathrm{gcd}(\tau_1,\tau_2).$
    It is clear that $\textrm{lcm}(\tau_1, \tau_2)=\tau_1 \tau_2/h$ is a period of $s_n,$ then 
  \[
     \left|\sum_{n\le \tau} \ep{\xi s_n} \right|=
    \frac{\tau}{\tau_1 \tau_2/h}\left|\sum_{n\le \tfrac{\tau_1 \tau_2}{h}} \ep{\xi s_n}\right|.
  \]    
    Hence, it is enough to prove that
  \[
    \left| \sum_{n\le \tfrac{\tau_1 \tau_2}{h}} \ep{\xi s_n} \right| \le \frac{\tau_1 \tau_2}{h} p^{-\delta}, \quad \textrm{with }\,
    (\xi,p)=1,
  \]    
    for some $\delta = \delta(\varepsilon)>0.$ We have
  \begin{align}\label{eq:nu=2}
    \left| \sum_{n\le \tfrac{\tau_1 \tau_2}{h}} \ep{\xi s_n} \right| & =
    \left| \sum_{u=0}^{h-1}\sum_{n\le \tfrac{\tau_1 \tau_2}{h^2}} \ep{\xi s_{nh + u}} \right| \le
    \sum_{u=0}^{h-1} \left| \sum_{n\le \tfrac{\tau_1 \tau_2}{h^2}} \ep{\xi s_{nh + u}} \right|\nonumber \\
      & \le h \times  
    \max_{0 \le u \le h-1}\left| \sum_{n\le \tau_1 \tau_2/h^2} \ep{\xi s_{nh + u}} \right|.
  \end{align}
 Let  $(n_1,n_2)$ be a tuple with $n_i\leq \frac{\tau_i}{h}.$ Since $\gcd(\tfrac{\tau_1}{h},\tfrac{\tau_2}{h})=1$, 
 by Chinese remainder theorem, 
     there exist integers $m_1,m_2$ with $\gcd(m_1, \tfrac{\tau_1}{h})= \gcd(m_2,\tfrac{\tau_2}{h_2})=1, $
     such that
  \begin{multline}\label{eq:Chinese}
    \left|\left\{ n \!\!\!\pmod{ \tfrac{\tau_1 \tau_2}{h^2}} \,:\, 
           1\le n \le \frac{\tau_1 \tau_2}{h^2} \right\}\right| = \\
        =\left|\left\{ n_1 m_1 \tfrac{\tau_2}{h}  +n_{2} m_{2} \tfrac{\tau_1}{h}
        \! \! \!\pmod{ \tfrac{\tau_1 \tau_2}{ h^2 }} \,:\, 
        1\le n_i \le \frac{\tau_i}{h} \right\}\right|.
  \end{multline} 
    Moreover, the pair $(m_1, m_2 )$ has the following property: 
    given $(n_1, n_{2}),$ with $1\le n_i\le \tau_i/h,$ then
    $n =  n_1 m_1 \tfrac{\tau_2}{h} +n_{2} m_{2} \tfrac{\tau_1}{h}$
    satisfies 
  \begin{equation*}
    n \equiv n_1 \!\!\! \pmod{\tfrac{\tau_1}{h}} \; \textrm{and }\,
    n \equiv n_2 \!\!\! \pmod{\tfrac{\tau_2}{h} },
  \end{equation*}
    and $n$ is unique modulo $\tfrac{\tau_1\tau_2}{h^2}.$
    Since $\tfrac{\tau_1}{h}= \ord{\alpha_1^h}$ and $\tfrac{\tau_2}{h}= \ord{\alpha_2^h}$
    then 
  \begin{equation}\label{eq:alphapower}
    \alpha_i^{hn} = 
    \alpha_i^{h(n_1 m_1 \tfrac{\tau_2}{h}  +n_{2} m_{2} \tfrac{\tau_{1}}{h})}
     = \alpha_i^{hn_i}, \quad 1 \le i \le 2.
  \end{equation} 
    Combining  \eqref{eq:Chinese} and \eqref{eq:alphapower} we have
 {\small    
  \begin{align}
     \left|\sum_{n\le \tfrac{\tau_1 \tau_2}{h^2}}\ep{\xi s_{nh+u}} \right| &=
     \left| \sum_{n_1\le \tfrac{\tau_1 }{h}}\ep{\tr{\xi \gamma_1 \alpha_1^{n_1h + u} }} \right| 
                 \times 
             \left |\sum_{n_2\le \tfrac{ \tau_2}{h}}
             \ep{ \tr{\xi \gamma_{2} \alpha_{2}^{n_{2}h+u}}}
             \right| \nonumber \\
     & =\left| \sum_{n_1\le \tfrac{\tau_1 }{h}}\ep{\tr{\gamma'_1 \alpha_1^{n_1h} }} \right| 
                 \times 
             \left |\sum_{n_2\le \tfrac{ \tau_2}{h}}
             \ep{ \tr{\gamma'_{2} \alpha_{2}^{n_{2}h}}}
             \right|, \label{eq:nu=2split}
  \end{align}}
     with $\gamma'_1=\xi \gamma_1 \alpha_1^{u}, \gamma'_2=\xi\gamma_2 \alpha_2^{u}$ in $\rF_p(\alpha_1, \alpha_2).$ 
     Since $\{s_n\}$ is a nonzero sequence then $\gamma'_i \neq 0,$ at least for
     some $1\le i \le 2.$ We may assume that all of them are nonzero. Each $\ep{ \tr{\xi\gamma'_{i} z}}$ corresponds to a nontrivial additive character, say $\psi_i(z),$ in $\rF_p(\alpha_i)=\rF_{p^r}.$ In order to satisfy condition~\eqref{eq:BourgainChang} we first 
     recall assumptions
    $h < p^{\varepsilon'},$  $\varepsilon > \varepsilon'>0$ and $\max_{\substack{ d < r \\ d | r }} \gcd(\tau_i, p^d-1) <  \tau_i p^{-\varepsilon}$
    for some $i \in \{1,2\}.$
    Then, 
    for any $d|r$ with $1\le d < r$ and some $i=1,2 $ we have
  \[
    \gcd\left(\tfrac{\tau_i}{h} , p^d - 1 \right)  \le \gcd(\alpha_i, p^d -1) < \tau_i p^{-\varepsilon} <
    \frac{\tau_i}{h}p^{-(\varepsilon-\varepsilon')}.
  \]    
    Therefore, by Bourgain and Chang~\cite[Theorem 2]{BourgainChang2006}  it follows that
     \[
    \left| \sum_{n_i \le \tau_i/h}\ep{\tr{\gamma'_i \alpha_i^{n_ih} }} \right| 
    =\left| \sum_{n_i \le \tau_i/h}\psi({ \alpha_i^{n_ih} }) \right| 
    \leq \frac{\tau_i}{h}p^{-\delta}, \quad \textrm{for some } \; 1\le i \le2.
  \]
    Thus, combining above equation with \eqref{eq:nu=2} and \eqref{eq:nu=2split} we get
  \[
    \max_{\xi \in \rF_p^*} \left|\sum_{n \le \frac{\tau_1\tau_2}{h}}\ep{\xi s_{nh}} \right| \leq h \times 
         \frac{\tau_1 \tau_2}{h^2}p^{-\delta}=\frac{\tau_1 \tau_2}{h}p^{-\delta}.
  \]
    This conclude the case $\nu=2.$ Now we proceed by induction over $\nu,$ and assume Theorem \ref{Thm:Main} to be true up to
    $\nu-1.$ We follow the idea due to Garaev~\cite[Section 4.4]{Garaev2010}. Considering \eqref{eq:s_nTrace} and periodicity, for any
    $t\ge 1$ we get
  \begin{align*}
    \tau \left| \sum_{n\le \tau} \ep{\xi s_n}\right|^{2t} & = \sum_{m\le \tau}\left| \sum_{n\le \tau} \ep{\xi s_{m+n}}\right|^{2t} \\
        &= \sum_{m\le \tau}\left|\sum_{n \le T}\ep{\xi (\tr{\gamma_1 \alpha_1^{m+n} }+ \cdots + \tr{\gamma_{\nu}
          \alpha_{\nu}^{m+n}})} \right|^{2t} \\
         & \le \sum_{n_1\le \tau} \cdots \sum_{n_{2t}\le \tau} \left| \sum_{m\le \tau} 
           \ep{\xi \sum_{i=1}^{\nu} \tr{\gamma_i\alpha_i^{m}\left(\alpha_i^{n_1}
             + \cdots - \alpha_i^{n_{2t}}\right)}}\right|.
  \end{align*}
     Raising to the power $2t,$ and applying Cauchy–Schwarz, we have
  \begin{align*}
     \tau^{2t}\left| \sum_{n\le \tau} \ep{\xi s_n}\right|^{4t^2}
          &\le {\tau^{2t(2t-1)}} \sum_{n_1\le \tau} \cdots \sum_{n_{2t}\le \tau} 
            \left| \sum_{m\le \tau} \ep{\xi \sum_{i=1}^{\nu} \tr{\gamma_i\alpha_i^{m}\left(\alpha_i^{n_1}
             + \cdots - \alpha_i^{n_{2t}}\right)}}\right|^{2t}.
            \end{align*}
     Given $(\lambda_1 ,\cdots, \lambda_{\nu}) \in \rF_q^{\nu},$  
     let $J_t(\lambda_1 ,\cdots, \lambda_{\nu})$ denote 
     the number of solutions of the system 
  \begin{equation*}\label{system:SumPowersAlpha}
     \left\{
     \begin{matrix}
             \alpha_1^{n_1}  +   \cdots + \alpha_{1}^{n_t} &=& \alpha_{1}^{n_{t+1}}  +   \cdots + \alpha_1^{n_{2t}} + \lambda_1 \\
                 \vdots       \qquad    \vdots          & &  \vdots          \qquad      \vdots   \qquad \vdots  \\ 
             \alpha_{\nu}^{n_1}+  \cdots + \alpha_{\nu}^{n_t} &=& \alpha_{\nu}^{n_{t+1}}  +   \cdots + \alpha_{\nu}^{n_{2t}} + \lambda_{\nu}
    \end{matrix} 
    \right.      
  \end{equation*}
    with $1 \le n_1, \cdots, n_{2t} \le \tau.$ Therefore,
  {\small    
  \begin{align}\label{ineq:s_n-powers}
          \left| \sum_{n\le \tau} \ep{\xi s_n}\right|^{4t^2} &\le \tau^{4t^2-4t} \sum_{\lambda_1 \in \rF_q} \cdots \sum_{\lambda_{\nu} \in \rF_q}
            J_t(\lambda_1 ,\cdots, \lambda_{\nu})  
             \left| \sum_{m\le \tau}
            \ep{ \xi\sum_{i=1}^{\nu}\tr{\gamma_i \lambda_i \alpha_i^{m}}}\right|^{2t}.
  \end{align}}
      Note that writing $J_{\nu}(\lambda_1 \cdots, \lambda_{\nu})$ in terms of character sums it follows that
  \begin{align*}
      J_{t}(\lambda_1 \cdots, \lambda_{\nu})  &= 
              \frac{1}{q^{\nu}}\sum_{\psi_1 \in \rwidehat{\rF}_{q}} \cdots \sum_{\psi_{\nu} \in \rwidehat{\rF}_{q}}
              \left|\sum_{n\le \tau} \psi_1(\alpha_1^n) \cdots \psi_{\nu}(\alpha_{\nu}^n)  \right|^{2t} \times  \psi_1(\lambda_1) \cdots \psi_{\nu}(\lambda_{\nu}) \nonumber \\
            & \le \frac{1}{q^{\nu}}\sum_{\psi_1 \in \rwidehat{\rF}_{q}} \cdots \sum_{\psi_{\nu} \in \rwidehat{\rF}_{q}}
              \left|\sum_{n\le \tau} \psi_1(\alpha_1^n) \cdots \psi_{\nu}(\alpha_{\nu}^n)  \right|^{2t} \nonumber \\
            &  \le  J_{t}(0, \ldots, 0) =:J_{t,\nu}.
  \end{align*}
     In particular, we note that 
     $J_{t,\nu} \le J_{t,\nu-1}.$ From~\eqref{ineq:s_n-powers} it follows that 
     {\small
  \begin{align*}
          \left|\sum_{n\le \tau} \ep{\xi s_n}\right|^{4t^2} \le \tau^{4t^2-4t} J_{t,\nu} 
             \sum_{m_1\le \tau} \cdots  \sum_{m_{2t}\le \tau} 
           \sum_{\lambda_1 \in \rF_q} \cdots \sum_{\lambda_{\nu} \in \rF_q}
             \ep{\sum_{i=1}^{\nu}\tr{\xi\beta_i\lambda_i (\alpha_i^{m_1} + \cdots - \alpha_i^{m_{2t}})}}
  \end{align*}}
    Note that  $a \gamma \lambda,$ with $a\gamma\neq0,$ runs over $\lambda \in \rF_q$, 
    then $\ep{\tr{a \theta \lambda z}}$    
    runs through all additive characters $\psi$ in $\rwidehat{\rF}_{q},$ evaluated at $z.$ Then the above expression can be written 
    as 
  {  
  \begin{align}
     \left| \sum_{n\le \tau} \ep{\xi s_n}\right|^{4t^2}  &\le \tau^{4t^2-4t} J_{t,\nu}  \sum_{m_1\le \tau} \cdots  \sum_{m_{2t}\le \tau}  
                \prod_{i=1}^{\nu}
              \left(\sum_{\psi_i \in \rwidehat{\rF}_q}  \psi_i \left( \alpha_i^{m_1} + \cdots - \alpha_i^{m_{2t}}\right)\right)
               \nonumber  \\
          & \le \tau^{4t^2-4t} q^{\nu} J^2_{t,\nu} \le \tau^{4t^2-4t} q^{\nu} J^2_{t,\nu-1}. \label{ineq:S(a)^(4k^2)}
  \end{align}}
    We now require an estimate for $J_{t,\nu-1}.$ Writing it as the sum of characters 
    {\small
      \begin{align}\label{ineq:nu} 
    J_{t,\nu-1} &= \frac{1}{q^{\nu-1}}\sum_{\lambda_1 \in {\rF_{q}}} \cdots \sum_{\lambda_{\nu-1} \in {\rF_{q}}}
              \left|\sum_{m\le \tau} \ep{\tr{\lambda_1 \alpha_1^m + \cdots + \lambda_{\nu-1}\alpha_{\nu-1}^m}} 
              \right|^{2t} \nonumber \\
           & = \frac{\tau^{2t}}{q^{\nu-1}} +
              O\left( \left(
              \max_{\substack{(\lambda_1, \ldots, \lambda_{\nu-1})\in \rF_q^{\nu-1} \\ (\lambda_1, \ldots, \lambda_{\nu-1}) \neq 0}}
              \left|\sum_{m\le \tau} \ep{\tr{\lambda_1 \alpha_1^m + \cdots + \lambda_{\nu-1}\alpha_{\nu-1}^m}} 
              \right|
              \right)^{2t}\right). 
  \end{align}}
     Finally, we note that $s'_m=\tr{\lambda_1 \alpha_1^m + \cdots + \lambda_{\nu-1}\alpha_{\nu-1}^m}$ defines a linear recurrence sequence 
     with period $\tau'$ dividing $\tau,$ which in particular satisfies induction hypothesis. Therefore
  \[
    \left|\sum_{m\le \tau} \ep{\tr{\lambda_1 \alpha_1^m + \cdots + \lambda_{\nu-1}\alpha_{\nu-1}^m}} 
              \right| \le \tau p^{-\delta'},
  \]     
    for some $\delta'=\delta'(\varepsilon)>0.$ Now, taking $t > d(\nu-1)/ 2\delta'$ (where $d=[\rF_q:\rF_p]$) and combining with
    \eqref{ineq:nu} we get
  \[
    J_{t,\nu-1} \ll \frac{\tau^{2t}}{q^{\nu-1}}.
  \]    
    We conclude the proof combining the above estimate with \eqref{ineq:S(a)^(4k^2)} to get
  \[
    \max_{\xi \in \rF_p^*}\left| \sum_{n\le \tau} \ep{\xi s_n}\right| \le \tau p^{-\delta},
  \]    
    with $\delta=-\tfrac{d(\nu-2)}{4t^2}.$\footnote{To get a non trivial estimate, we must have a non zero $\delta.$ This is true when $\nu>2.$ Hence our induction step starts from $\nu=2.$ }

The following is an immediate corollary of this theorem which will be quite handy in establishing several results in Section~\ref{MF} and in Section~\ref{se:waring}.
\begin{corollary}\label{coro:ThmMain}
    Suppose that  $\{ s_n\}$ is a nonzero linear recurrence sequence 
    of order $r\ge 2$  such that its characteristic polynomial $\omega(x)$  is
    irreducible in $\rF_p[x].$  
    If its period $\tau$ satisfies
    
   \begin{equation*}
      \max_{\substack{ d< r \\ d| r}}  \gcd({\tau}, p^d-1) 
                < {\tau} \, p^{-\varepsilon},
   \end{equation*} 
    then there exists a $\delta=\delta(\varepsilon)>0$ such that
   \begin{equation*}
     \max_{\xi \in \rF_p^*} \left|\sum_{n \le \tau}\ep{\xi s_n} \right| \le  \tau { p^{-\delta}}.
   \end{equation*}    
\end{corollary}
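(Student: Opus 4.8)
The plan is to deduce Corollary~\ref{coro:ThmMain} directly from Theorem~\ref{Thm:Main} by checking that an irreducible characteristic polynomial fits the hypotheses of that theorem with $\nu=1$ (or, if one prefers to stay in the range where the induction of Theorem~\ref{Thm:Main} is literally stated, by reducing to the $\nu=1$ case of Bourgain--Chang that was isolated in the proof). So first I would observe that since $\omega(x)$ is irreducible over $\rF_p$ of degree $r\ge 2$, it is in particular separable because $\gcd(\omega(0),p)$ divides $\omega(0)$ which is a unit: an irreducible polynomial over a finite field is always separable, hence $\omega$ has $r$ distinct roots $\alpha,\alpha^p,\dots,\alpha^{p^{r-1}}$ in its splitting field $\rF_{p^r}$. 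Thus the "distinct roots" and "$(\omega(0),p)=1$" hypotheses of Theorem~\ref{Thm:Main} hold, and here $\nu=1$ with the single irreducible factor $\omega_1=\omega$ of degree $r>1$.

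Next I would verify the order condition. Let $\alpha$ be a root and $\tau_1=\ord{\alpha}$. I claim $\tau_1=\tau$, the least period of $\{s_n\}$: by~\eqref{eq:RootsPowers} we have $s_n=\sum_{j=0}^{r-1}\beta_j\alpha^{p^j n}$ with not all $\beta_j$ zero (the sequence is nonzero), and since the exponents $\alpha^{p^j}$ all have the same multiplicative order $\ord{\alpha}$ (Frobenius is an automorphism), the sequence $\{s_n\}$ is periodic with period dividing $\ord{\alpha}$; conversely periodicity forces $\alpha^{p^j T}=\alpha^{p^j}$ for the relevant $j$, which gives $\ord{\alpha}\mid T$. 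Hence condition \textbf{a)} of Theorem~\ref{Thm:Main},
\[
\max_{\substack{d<r\\ d\mid r}}\gcd(\tau_1,p^d-1)<\tau_1 p^{-\varepsilon},
\]
is exactly the stated hypothesis $\max_{d<r,\,d\mid r}\gcd(\tau,p^d-1)<\tau p^{-\varepsilon}$. Condition \textbf{b)} of Theorem~\ref{Thm:Main} is vacuous when $\nu=1$ since it only asks for a pair $i\ne j$, and there is none; alternatively one simply invokes the base case of the proof of Theorem~\ref{Thm:Main}, namely Bourgain--Chang~\cite[Theorem 2]{BourgainChang2006}, applied to the character $\psi(z)=\ep{\tr{\gamma_1 z}}$ after rewriting $s_n=\tr{\gamma_1\alpha_1^n}$ via~\eqref{eq:s_nTrace}, noting $\gamma_1\ne0$ and that~\eqref{eq:BourgainChang} follows from the hypothesis (the second inequality there implies the first when $r\ge2$). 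Either way Theorem~\ref{Thm:Main} yields a $\delta=\delta(\varepsilon)>0$ with $\max_{\xi\in\rF_p^*}\bigl|\sum_{n\le\tau}\ep{\xi s_n}\bigr|\le\tau p^{-\delta}$, which is the assertion.

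There is no real obstacle here; this is a bookkeeping corollary. The only point that needs a sentence of care is the identification $\tau=\ord{\alpha}$ and, relatedly, the fact that for an irreducible $\omega$ the "$\nu=1$" instance of the argument is genuinely available (Theorem~\ref{Thm:Main} as phrased requires $\nu>2$ for a nontrivial $\delta$ in its \emph{inductive} step, but its \emph{base} case $\nu=1$ is precisely Bourgain--Chang and is recorded as such in the proof). So the cleanest write-up invokes Bourgain--Chang directly rather than quoting Theorem~\ref{Thm:Main} as a black box, thereby sidestepping any ambiguity about the range of $\nu$.
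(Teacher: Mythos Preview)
Your proposal is correct and matches the paper's approach: the paper states the corollary as immediate from Theorem~\ref{Thm:Main}, and in the proof of that theorem the case $\nu=1$ is exactly Bourgain--Chang~\cite[Theorem~2]{BourgainChang2006} applied after the trace rewriting~\eqref{eq:s_nTrace}. Your added verification that $\tau=\ord{\alpha}$ (via the Vandermonde argument on the distinct conjugates $\alpha^{p^j}$) is the one detail the paper leaves implicit, and your observation that condition~\textbf{b)} is vacuous for $\nu=1$---so that quoting Bourgain--Chang directly is cleaner than citing Theorem~\ref{Thm:Main}---is well taken.
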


\section{Exponential sums for  modular forms}\label{MF}
 In this section, we study the effect of linear recurrence sequence and Theorem~\ref{Thm:Main} 
 in the behaviour of the exponential sums attached to certain Fourier coefficients of modular forms. 
 As a consequence, we obtain interesting results which have been summarized earlier in the form of 
 Theorem~\ref{thm:main1-1} and Theorem~\ref{thm:main1-2}.

\subsection{Order of the roots of the characteristic polynomial}
In the case of normalized eigenforms, the sequence $\{a(p^n)\}$ defines a linear recurrence sequence of order two when $p\nmid N,$ and otherwise it is of order one. This is one of the tools for Theorem~\ref{thm:main1-1}. However, we do not need to assume that the form is normalized because 
the normalizing factor is in $\mathbb{Q},$ and we can realize that to be an element of $\mathbb{F}_{\ell}^{*}$ for any 
large enough prime $\ell.$ Before going into the proof of this theorem, we develop a tool  which will be quiet useful throughout.
We state it in the form of following lemma.

\begin{lemma}\label{Igor1} 
    Let $\omega(x)=x^2 + ax +b  \in \mathbb{Z}[x]$ be a quadratic polynomial with $b \neq 0$ and let $\alpha, \beta$ be its roots such that none of $\alpha, \beta$ or $\alpha\beta^{-1}$ is a root of unity. For any prime $\ell,$ let
    $\alpha_{\ell},$ $\beta_{\ell}$ be its roots in the splitting field of $\omega(x)$ over $\rF_{\ell}.$
    Then, given $0< \varepsilon< 1/2,$  for $\pi(y) + O(y^{2\varepsilon})$ many
    primes $\ell \leq y,$  we have 
  \[
    \ord{\alpha_{\ell}}> \ell^{\varepsilon}, \qquad \ord{\beta_{\ell}} > \ell^{\varepsilon}
    \quad \textrm{and} \quad \ord{(\alpha_{\ell} \beta_{\ell}^{-1}})> \ell^{\varepsilon}.
  \]    
\end{lemma}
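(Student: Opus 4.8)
The plan is to bound, for each of the three algebraic numbers $\alpha$, $\beta$, $\alpha\beta^{-1}$, the number of primes $\ell \le y$ for which the corresponding reduction has small multiplicative order, and then take a union bound. Fix one such number, call it $\gamma$, lying in a number field $L$ (either $\mathbb{Q}$ or a quadratic field, since $[\,\mathbb{Q}(\alpha):\mathbb{Q}\,]\le 2$), with $\gamma$ not a root of unity by hypothesis. For a prime $\ell$ of good reduction, if $\ord{\gamma_\ell}=t$ then $\gamma^t \equiv 1$ in the residue field, i.e. $\ell$ (more precisely a prime of $L$ above $\ell$) divides the norm of $\gamma^t - 1$. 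So the key step is: the number of primes $\ell \le y$ with $\ord{\gamma_\ell} \le z$ is at most
\[
  \sum_{t \le z} \#\{\ell \le y : \ell \mid N_{L/\mathbb{Q}}(\gamma^t - 1)\}.
\]
Since $\gamma$ is not a root of unity, $\gamma^t - 1 \ne 0$ for all $t\ge 1$, and $|N_{L/\mathbb{Q}}(\gamma^t-1)| \le C^t$ for some constant $C = C(\gamma)$ depending only on the (archimedean) absolute values of the conjugates of $\gamma$. Hence $N_{L/\mathbb{Q}}(\gamma^t-1)$ has at most $O(t\log C/\log 2) = O(t)$ prime divisors, and therefore
\[
  \#\{\ell \le y : \ord{\gamma_\ell} \le z\} \le \sum_{t \le z} O(t) = O(z^2).
\]

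Taking $z = \lceil y^{\varepsilon}\rceil$, this shows that for each of $\gamma \in \{\alpha,\beta,\alpha\beta^{-1}\}$ the set of primes $\ell \le y$ with $\ord{\gamma_\ell} \le y^{\varepsilon}$ has size $O(y^{2\varepsilon})$; one also discards the $O(1)$ primes of bad reduction (those dividing $b$ or the relevant discriminants, or for which the splitting field behaves badly). A union bound over the three choices of $\gamma$ then leaves $\pi(y) - O(y^{2\varepsilon}) = \pi(y) + O(y^{2\varepsilon})$ primes $\ell \le y$ for which simultaneously $\ord{\alpha_\ell} > \ell^{\varepsilon}$, $\ord{\beta_\ell} > \ell^{\varepsilon}$, and $\ord{(\alpha_\ell\beta_\ell^{-1})} > \ell^{\varepsilon}$. (Here one uses $y^{\varepsilon} \ge \ell^{\varepsilon}$ to pass from the uniform threshold $y^\varepsilon$ to the $\ell$-dependent threshold $\ell^\varepsilon$; since the bound is required with threshold $\ell^{\varepsilon}$, bounding the complementary set by the count with the larger threshold $y^{\varepsilon}$ is exactly what is needed.)

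The only genuine input is that $\gamma$ is not a root of unity, which is precisely the hypothesis that none of $\alpha$, $\beta$, $\alpha\beta^{-1}$ is a root of unity — this guarantees $\gamma^t \ne 1$ in characteristic zero so that the norms $N_{L/\mathbb{Q}}(\gamma^t-1)$ are nonzero integers and the divisor-counting step is legitimate. I expect the main (minor) obstacle to be bookkeeping around the finitely many bad primes and the case distinction $L = \mathbb{Q}$ versus $L$ quadratic, together with making sure the constant $C(\gamma)$ and the implied constants are genuinely independent of $\ell$; none of this is serious. An alternative to the elementary norm bound would be to invoke a result on the size of the multiplicative order of reductions (e.g. Pappalardi-type lower bounds), but the crude $O(z^2)$ count above is entirely sufficient for the stated error term $O(y^{2\varepsilon})$.
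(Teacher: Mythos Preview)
Your approach is correct and essentially the same as the paper's---both show that each bad prime $\ell$ divides a nonzero integer of size $C^{O(T^2)}$ (with $T=y^{\varepsilon}$), hence there are $O(y^{2\varepsilon})$ bad primes---though the paper packages all three conditions into a single resultant $\mathrm{Res}(\omega,G_T)$ with $G_T(x)=\prod_{t\le T}(x^t-1)(x^{2t}-b^t)$ rather than three separate norm computations. One small wrinkle to tidy: for $\gamma=\alpha\beta^{-1}$ the quantity $N_{L/\mathbb{Q}}(\gamma^t-1)$ is in general only rational (since $\gamma$ need not be an algebraic integer), so instead of that norm you should use the rational integer $(\alpha^t-\beta^t)^2\in\mathbb{Z}$, which still has size $\le C^t$ and is divisible by every bad $\ell\nmid b$; this is precisely what the paper's factor $x^{2t}-b^t$ encodes via the relation $\alpha\beta=b$.
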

\begin{proof}
    For notational convenience, throughout the proof we will simply write $\omega,G_T$ instead of $\omega(x), G_T(x)$ respectively, and $\omega \hspace{-0.1cm}\pmod \ell, G_T\hspace{-0.1cm}\pmod \ell$ for the reduced polynomials $\omega(x) \hspace{-0.1cm}\pmod \ell, G_T(x)\hspace{-0.1cm}\pmod \ell$ respectively. It is clear that $\omega \hspace{-0.1cm} \pmod {\ell}$ has distinct roots for all but finitely many primes $\ell,$ since $a^2 - 4b\neq 0.$ 
    For any such prime $\ell$, let $\alpha_{\ell}$ and $\beta_{\ell}$ be the distinct roots in its splitting field. Given a large positive parameter $T,$ consider the polynomial  
  \[
    G_T(x)= \prod_{t \le T} (x^t - 1 )(x^{2t}-b^t) \in \mathbb{Z}[x].
  \]
    We first consider the resultant $\textrm{Res}(\omega,G_T)$, and note that
  \[
    \textrm{Res}(\omega,G_T) \hspace{-0.2cm}\pmod \ell= \prod_{1\le i \le 3T}(\alpha_{\ell} -\mu_i)(\beta_{\ell} -\mu_i),
  \]     
    where $\mu_i$ are the roots of $G_T$ in its splitting field over $\rF_{\ell}.$ 
    
    In particular $\textrm{Res}(\omega,G_T)\equiv 0 \hspace{-0.1cm} \pmod{\ell}$ if and only if
    $\omega \hspace{-0.1cm} \pmod \ell$ and $G_T \pmod \ell$ have common roots in some finite extension of $\mathbb{F}_{\ell}.$ 
    Additionally, since $\alpha_{\ell} \beta_{\ell} = b,$ it follows that $\ord{(\alpha_{\ell}\beta_{\ell}^{-1})} \le T$ if and only if 
    $\alpha_{\ell}^{2t} - b^t =0$ (or $\beta_{\ell}^{2t} - b^t =0$), for some $t \le T.$ 
    Therefore, $\alpha_{\ell}$ or $\beta_{\ell}$ are common roots of $\omega \hspace{-0.1cm} \pmod \ell$ and $G_{T} \hspace{-0.1cm} \pmod \ell$ if
    $\ord{\alpha_{\ell}}, \ord{\beta_{\ell}}$ or $\ord{(\alpha_{\ell}\beta_{\ell}^{-1})}$ are less than $T.$ Now, the Sylvester matrix of $\omega$ and $G_T$ 
    is a square matrix of order $2 + \deg G_T\ll T^2,$ and entries bounded 
    by an absolute constant $M$ (which depends on $a,b$ and not on $\ell$ or the parameter $T$). 
    Then, the determinant
  \[
     \textrm{Res}(\omega,G_T)\le T^2! \times M^{T^2} \ll M^{2T^2 \log T}.
  \]    
   Note that $\text{Res}(\omega,G_T)$ is zero if and only if $\alpha^t=1, \beta^{t}=1$ or $(\alpha\beta^{-1})^t=1$ for some $t\leq T,$ 
   which following our assumption can not happen. In particular, the resultant has at most $O\left(T^2\right)$ many distinct prime divisors. This shows that 
       \[
       |\{\ell \mid \ord{\alpha_{\ell}}\leq T \quad \textrm{or }\,  \ord{\beta_{\ell}}\leq T \quad \textrm{or }\, \ord{\alpha_{\ell}\beta_{\ell}}^{-1}\leq T\}|=O(T^2).
     \]
    Choosing $T=y^{\varepsilon}$
    the number of primes $\ell \le y$ such that 
  \[  
    \ord{\alpha_{\ell}} \le \ell^{\varepsilon} \quad \textrm{or} \quad \ord{\beta_{\ell}} \le \ell^{\varepsilon}
    \quad \textrm{or} \quad \ord{(\alpha_{\ell} \beta_{\ell}^{-1}}) \le \ell^{\varepsilon}
  \]    
    is $O\left(y^{2\varepsilon}\right).$
    \end{proof}

Let us now proceed to prove the main result of this section.
\subsection{Proof of Theorem~\ref{thm:main1-1}} If $p\mid N,$ then we only need to consider 
\begin{equation}
    \max_{\xi \in \rF_{\ell}^*} \left|\sum_{n\le \tau}\el{\xi a(p)^n }\right|. 
  \end{equation}
  Note that, if $p\notin \mathcal{P}$ then $a(p)=0,$ and the problem is trivial in this case because, we have $\tau=1.$ On the other hand, if $p\in \mathcal{P},$ then for any large enough prime $\ell, \tau$ is simply the order of $a(p)\hspace{-0.1cm}\pmod \ell$ in $\mathbb{F}_{\ell}^{*}.$ Due to Lemma~\ref{Igor1}, we may assume that $\tau>p^{\varepsilon}$ holds for $\pi(y)+O(y^{2\varepsilon})$ many primes $\ell<y.$ Hence, this case is settled down by~\cite[Theorem 6]{BGK2006}.

    Let us now consider the case $p\nmid N$. The characteristic polynomial of \eqref{identity:primepowers} is
\begin{equation}\label{eq:associate} 
    \omega(x)=x^2- a(p)x + p^{k-1}, 
  \end{equation}    
     and has discriminant $a^2(p)-4p^{k-1}.$ We note that in our case the discriminant does not vanish,
     otherwise $|a(p)|=2p^{(k-1)/2}$ is absurd, with $a(p)$ being integer and $p^{(k-1)/2}$ irrational.
     Now, let $\rP$ be the set of all primes. We divide the proof for primes $p\in \cP$ and $p\in \rP \setminus \cP.$ Since $a^2(p)-4 p^{k-1}\neq 0,$ for any $p\in \cP,$ we write $a^2(p)-4 p^{k-1}=u^2 D_p,$  
    with $D_p<0$ square-free and $u\neq0.$ We now split the cases according to $D_p \hspace{-0.1cm}\pmod \ell$  
    is quadratic residue, zero or non quadratic residue modulo $\ell.$ Set 
  \[
    \rP = \rP_{0} \cup \rP_{1} \cup \rP_{-1}, \quad \textrm{where } 
    \rP_{\nu}=\left\{\ell \in \rP  \;:\; \left(\frac{D_p}{\ell}\right)=  \nu \right\}.
  \] 
    For $\nu = 0, 1, -1$, we also define 
  \[
     \rP_{\nu}(x)=\rP_{\nu}\cap [1,x],\quad \pi_{\nu}(x)=\left|\rP_{\nu}(x)\right| \quad \textrm{and} \quad  \kappa_{\nu}=\lim_{x \to \infty} \frac{\pi_{\nu}(x)}{\pi(x)}.
  \]    
    It is clear that $\pi_{\nu}(x)=\pi(x)(\kappa_{\nu}+o(1)),$ and $\kappa_0+\kappa_1 + \kappa_{-1}=1.$ 

    Note that for a given prime $p,$ the associated polynomial $\omega(x) \pmod \ell$ has a single root in $\rF_{\ell}$ if and only if
    $u^2D_p \equiv 0 \pmod \ell.$ Since such equation has finitely many solutions for $\ell,$ we get 
    $\kappa_0=0.$
    On the other hand, Chebotarev's density theorem implies that the uniform distribution of primes $\ell$
    such that $\omega(x) \hspace{-0.1cm}\pmod \ell$ is irreducible or has distinct roots in $\rF_{\ell}$. Equivalently, the primes $\ell$
    satisfying $\left(\tfrac{D_p}{\ell}\right)=\pm 1$ are distributed in the same proportion, 
    therefore $\kappa_{-1}=\kappa_1=1/2.$ 
    We now turn to establish nontrivial exponential sums for $\{a(p^n)\}\pmod \ell$ with
    $\ell \in \rP_{\nu}$ for $\nu=\pm 1.$ 

\subsection*{Case 1. $\ell \in \rP_{-1}$:}  we want to show that the inequality~~\eqref{eq:main1} is satisfied 
    by $\frac{\pi(y)}{2}+O(y^{2\varepsilon})$ many primes 
    $\ell \leq y$ in $\rP_{-1}.$ In this case the associated polynomial~\eqref{eq:associate}
    is irreducible modulo $\ell,$ then the idea is to employ Corollary~\ref{coro:ThmMain}.
    Let $\alpha$ and  $\beta =\alpha^{\ell}$  be the conjugate roots of \eqref{eq:associate} in 
    its splitting field $\rF_{\ell}(\alpha).$ For a given $\varepsilon>0,$ from Lemma \ref{Igor1} it follows that for $\pi(y)+O(y^{2\varepsilon})$ many primes $\ell \leq y,$ the following inequalities
\begin{equation}\label{ineq:RootsOrders}
     \ord{\alpha^{\ell}} = \ord{\alpha}  > \ell^{\varepsilon} \quad \textrm{and} \quad
     \ord{\alpha \beta^{-1}}=\ord{\alpha^{1-\ell}} > \ell^{\varepsilon}
   \end{equation}
    hold. Combining the identity 
  \[    
    \ord{\alpha^{\ell - 1}}=\frac{\ord{\alpha}}{\gcd(\ord{\alpha},\ell-1)}
  \]    
    with the second inequality of \eqref{ineq:RootsOrders}, we get
  \[
    \gcd(\ord{\alpha},\ell-1)=\frac{\ord{\alpha}}{\ord{\alpha^{\ell - 1}}}=
     \frac{\ord{\alpha}}{\ord{\alpha^{1 - \ell}}} 
     < (\ord{\alpha}) \ell^{-\varepsilon}.
  \]    
    Now applying Corollary~\ref{coro:ThmMain} we complete the proof of this case.

\subsection*{Case 2. $\ell \in \rP_{1}$:} let $\alpha,\beta$ be the roots of $\omega(x) \pmod{\ell}$ inside $\mathbb{F}^*_{\ell}.$ From  \eqref{eq:RootsPowers} 
    it follows that for $n\geq 0,$ $ a(p^n) \equiv  c \alpha^n + d \beta^n \pmod{\ell}, $
    for some constants $c,d$ in $\rF_{\ell},$ with $(\alpha, \beta)\neq (0,0).$ 
    It is clear that $\ell-1$ is a period of the sequence $a(p^n) \pmod{\ell},$ and hence $\tau $ divides $\ell-1.$
    We have
  \[
    \sum_{n\le \tau} \el{\xi a(p^n)} = \frac{\tau}{\ell-1} \sum_{n\le \ell-1} \el{\xi a(p^n)}
       = \frac{\tau}{\ell-1} \sum_{n\le \ell-1} \el{\xi (c\alpha^n + b \beta^n)}.
  \]    
    From Lemma~\ref{Igor1}, there is a subset of $\rP_{1}$ with $\frac{\pi(y)}{2}+O(y^{2\varepsilon})$ many primes $\ell \leq y$ such  
    that $\ord \alpha, \ord \beta$ and $\ord (\alpha \beta^{-1})$ are bigger 
    than $\ell^{\varepsilon}.$ It follows from~\cite[Corollary]{Bourgain2005a} that there exists a $\delta=\delta(\varepsilon)>0$ such that
  \[
    \max_{\substack{ (c,d)\in \rF_{\ell} \times \rF_{\ell} \\ (c,d)\neq (0,0) }}
    \left|\sum_{{ n \le \ell-1}}\el{c \alpha^n + d \beta^n } \right| \le \ell^{1-\delta}.
  \]     
    Hence, \emph{(i)} of Theorem~\ref{thm:main1-1} holds. Now, assume that $p$ belongs to the exceptional set $\rP \setminus \cP,$ that is  $a(p^u)=0$ 
    for some $u\ge 1.$ We consider $u=u(p)$ to be the least such integer. Since the discriminant is nonzero (the roots
    $\alpha$ and $\beta$ of \eqref{eq:associate} are distinct),  we get 
  \begin{equation*}
    a(p^u)=\frac{\alpha^{u+1}-\beta^{u+1}}{\alpha - \beta}=0.
  \end{equation*}
    Set $b(u+1)=a(p^u)$, then it follows that for all $n\ge 1$ we have
  \[
    b(n(u+1))=a(p^{n(u+1)-1})=\frac{\alpha^{n(u+1)} - \beta^{n(u+1)}}{\alpha - \beta}=0.
  \]    
   Therefore,
  \begin{align}
    \sum_{n\le \tau}\el{\xi a(p^n) } & = \sum_{n=0}^{\tau-1}\el{\xi b(n+1)} 
     = \left(\sum_{n=0}^{ \floor*{\tau/(u+1)}} \sum_{e=0}^{u} \el{\xi b({n(u+1) +e})}\right) + O(u)\nonumber \\
    & = \floor*{\frac{\tau}{u+1}} + \left(\sum_{e=1}^{u} \sum_{n=0}^{ \floor*{\tau/(u+1)}}  \el{\xi b({n(u+1) +e})}\right) + O(u).
    \label{sum_b(n(u+1)+e)} 
  \end{align}    
    First of all observe that $u$ is odd.  As otherwise, if $u$ is even then we would get 
  \[
    \alpha^{u+1}+\beta^{u+1}=2\alpha^{u+1}=\pm 2p^{\frac{(u+1)(k-1)}{2}},
  \]
    which is absurd as $\alpha^{u+1}+\beta^{u+1}$ is an integer but $\frac{(u+1)(k-1)}{2}$ is not. Now, for any $0<e<u+1$ we have
  \[
    b((u+1)n+e)=\alpha^{(u+1)n}\frac{(\alpha^e-\beta^e)}{\alpha-\beta}=\left(\pm p^{\frac{(u+1)(k-1)}{2}}
    \right)^{n}a(p^{e-1}),
  \]
    where the sign on the right hand side above depends on the sign of $\alpha^{u+1}.$ 
    Without loss of generality, we are assuming that this sign is negative. Moreover, it is easy to see that our next argument 
    applies to the positive sign case as well. Since $u$ is fixed, so are all the $e$'s up to $u-1.$ 
    In particular, we may consider large $\ell$'s for which all of the $a(p^{e}) \not \equiv 0 \pmod{\ell}$
    for any $1 \le e \le u-1.$ Then, we have
    \label{10}
  \begin{equation*}\label{eq:ppower}
    \sum_{n=0}^{ \tau/(u+1)}  \el{\xi b({n(u+1) +e})} = 
    \sum_{n=0}^{ \tau/(u+1)} \el{\xi \left(-p^{\frac{(u+1)(k-1)}{2}}\right)^n a(p^{e-1})}.
  \end{equation*}
    Due to Lemma~\ref{Igor1}, we may assume that $t_u=\ord{(-p^{(k-1)(u+1)/2})} >\ell^{\varepsilon}$ holds for $\pi(y)+O(y^{2\varepsilon})$ many primes $\ell \leq y$. Now, by \cite[Theorem 6]{BGK2006} it follows that
  \begin{equation}\label{eq:BGK}
    \left|\sum_{n\le t_u} \el{\xi \left(-p^{\frac{(u+1)(k-1)}{2}}\right)^n a(p^{e-1})} \right| \le t_u \ell^{-\delta}, \quad 
    \textrm{for some }  \delta=\delta(\varepsilon/2)>0.
  \end{equation}
   
   Writing $[\tau/(u+1)]=q t_u + r,$ with 
    $0\le r < t_u$ it follows that
  \begin{align*}
    \sum_{n\le \tau/(u+1)}\el{\xi \alpha^{{(u+1)n}}a(p^{e-1})}  &= 
    q \sum_{n\le t_u}\el{\xi \alpha^{{(u+1)n}}a(p^{e-1})} + \\
    & \;\, + \sum_{n\le r}\el{\xi \alpha^{{(u+1)n}}a(p^{e-1})}.
  \end{align*}
    The estimate
  $
    \left|\sum_{n\le t_u}\el{\xi \alpha^{{(u+1)n}}a(p^{e-1})} \right| \le t_u \ell^{-\delta}
  $    
    follows from \eqref{eq:BGK}. If $r \le \ell^{\varepsilon/2},$ then we get trivially
    $\left| \sum_{n\le r} \el{\xi \alpha^{{(u+1)n}}a(p^{e-1})} \right| 
    \le \ell^{\varepsilon/2}.$ If $\ell^{\varepsilon/2} \le r < t_u,$ then from \eqref{eq:BGK} it follows that 
  \[
    \left| \sum_{n\le r} \el{\xi \alpha^{{(u+1)n}}a(p^{e-1})} \right| \le t_u \ell^{-\delta}.
  \]
    Therefore,
  \begin{align*}
    \left| \sum_{n\le r} \el{\xi \alpha^{{(u+1)n}}a(p^{e-1})} \right|  
    \le \max\left\{\ell^{\varepsilon/2}, t_u \ell^{-\delta}\right\}.
  \end{align*}
    Recalling that $t_u \ge \ell^{\varepsilon},$ we can also assume that $t_u \ell^{-\delta}\ge \ell^{\varepsilon/2}$ 
    by taking small enough $\delta.$ Thus,
  \begin{align*}
    \left|\sum_{n\le \tau/(u+1)}\el{\xi \alpha^{{(u+1)n}}a(p^{e-1})} \right|
     \le (q t_u+ t_u) \ell^{-\delta} \ll \frac{\tau}{u+1}\ell^{-\delta}.
  \end{align*}
     Finally, combining the above inequality with  
    \eqref{sum_b(n(u+1)+e)} we obtain
     \begin{align*}
\max_{\xi \in \rF_{\ell}^*}\left| \sum_{n\le \tau}\el{\xi a(p^n) } \right| &=\floor*{\frac{\tau}{u+1}} + O\left(\tau \ell^{-\delta}+u\right)\\
&=\frac{\tau}{u+1}+O\left(\tau \ell^{-\delta}+u\right).
   \end{align*}
    This conclude the proof for all exceptional set of primes $p\in \rP \setminus \cP.$

\subsection{Consequences of Theorem~\ref{thm:main1-1}}
Let us consider an exponential sum of type $S(p,x,\alpha)=\sum_{p^n\leq x} \mathbf{e}(\alpha a(p^n)),$ for $\alpha \in [0,1].$ As one of the consequences of Theorem~\ref{thm:main1-1}, we want to study this exponential sum when $\alpha$ is a rational whose denominator is a prime. In this regard, we have the following result.

\begin{corollary} Let $f$ be an eigenform of weight $k$ and level $N$ with rational coefficient. Then for a given $ 0 < \varepsilon < 1/2,$ there exists a $\delta(\varepsilon)>0$ such that for at least $\gg \frac{(\log x)^{1 - \delta/(2+\delta)}}{\log \log x}$ many primes $\ell,$ we have the following estimates: 

\begin{equation*}
\max_{\xi\in \mathbb{F}_{\ell}^{*}}\left|\sum_{p^n \leq x}\el{\xi a(p^n)}\right|=\left\{\begin{array}{cccccccc}  
 O\left((\log x / \log p)^{1 - \delta/(2+\delta)}\right) & \mr{if} \quad p\notin \mathcal{P} \\  
 & \\
 \frac{1}{u+1}\frac{\log x}{\log p}+O\left((\log x / \log p)^{1 - \delta/(2+\delta)}\right) & \mr{if } \quad p\in \mathcal{P}
\end{array}\,. \right .
\end{equation*}
\end{corollary}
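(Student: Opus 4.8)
The plan is to decompose the sum $S=\sum_{p^n\le x}\el{\xi a(p^n)}$ according to residues of $n$ modulo the least period $\tau$ of the linear recurrence sequence $\{a(p^n)\}\pmod\ell$, and then invoke Theorem~\ref{thm:main1-1} to control each complete block of length $\tau$. Write $M=\floor*{\log x/\log p}$, so that $S=\sum_{n=0}^{M}\el{\xi a(p^n)}$ (up to the contribution of $n=0$, which is $O(1)$). Splitting $M=q\tau+\rho$ with $0\le\rho<\tau$, one gets $S=q\sum_{n=0}^{\tau-1}\el{\xi a(p^n)}+\sum_{n=0}^{\rho-1}\el{\xi a(p^n)}$, and the residual sum over $\rho$ terms is again, up to $O(1)$ boundary terms, a sum over an initial segment of a periodic sequence, so it is bounded in absolute value by $\tau$ trivially and by the full-period estimate when $\rho$ is not too small — exactly the bootstrapping already carried out in the proof of the exceptional case of Theorem~\ref{thm:main1-1}. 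Thus the whole analysis reduces to: (a) the size of $\tau$ relative to $M$, and (b) the full-period bound, which is precisely what Theorem~\ref{thm:main1-1} supplies.

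For the main term, I would distinguish $p\notin\mathcal P$ and $p\in\mathcal P$ exactly as in Theorem~\ref{thm:main1-1}. When $p\in\mathcal P$, estimate~\eqref{eq:main1} gives $\left|\sum_{n\le\tau}\el{\xi a(p^n)}\right|\le\tau\ell^{-\delta}$ for the relevant primes $\ell$, whence $|S|\le q\tau\ell^{-\delta}+O(\tau)\le M\ell^{-\delta}+O(\tau)$. When $p\notin\mathcal P$, estimate~\eqref{eq:main111} gives $\sum_{n\le\tau}\el{\xi a(p^n)}=\frac{\tau}{u+1}+O(\tau\ell^{-\delta}+u)$, so $S=\frac{q\tau}{u+1}+O(M\ell^{-\delta}+qu+\tau)=\frac{M}{u+1}+O(M\ell^{-\delta}+\cdots)$, since $q\tau=M-\rho=M+O(\tau)$ and $u=u(p)$ is a fixed bounded quantity. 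The key point now is to choose $\ell$ in the admissible range so that $\tau$ is neither too large nor too small relative to $M$; recall $\tau\mid\ell-1$ (or $\tau\le\ell^2-1$ in the irreducible case), and by Lemma~\ref{Igor1} we may take $\tau>\ell^{\varepsilon}$. Taking $\ell$ of size roughly $M^{1/(?)}$ and balancing $M\ell^{-\delta}$ against the trivial bound $\tau\le\ell^2$ forces the exponent $1-\delta/(2+\delta)$: indeed one wants $\tau\approx\ell^2$ and $M\ell^{-\delta}\approx\ell^2$, i.e. $\ell^{2+\delta}\approx M$, so $\tau\approx\ell^2\approx M^{2/(2+\delta)}=M^{1-\delta/(2+\delta)}$, matching the claimed error $O\bigl((\log x/\log p)^{1-\delta/(2+\delta)}\bigr)$.

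For the count of admissible primes $\ell$: Theorem~\ref{thm:main1-1} and Lemma~\ref{Igor1} produce $\pi(y)+O(y^{2\varepsilon})$ good primes $\ell\le y$, i.e. essentially all primes up to $y$. But the corollary needs $\ell$ comparable to a fixed power of $M$, namely $\ell\asymp M^{1/(2+\delta)}$, and among primes of that size we need the estimate to hold; since the exceptional set has size $O(y^{2\varepsilon})$ which is $o(\pi(y))$, a positive proportion — in fact almost all — primes $\ell$ in a dyadic window around $M^{1/(2+\delta)}$ are good, giving $\gg \pi(M^{1/(2+\delta)})\gg \frac{M^{1/(2+\delta)}}{\log M}$ many such primes. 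Rewriting $M=\log x/\log p$ and absorbing constants, this is $\gg\frac{(\log x)^{1/(2+\delta)}}{\log\log x}$; to match the stated $\frac{(\log x)^{1-\delta/(2+\delta)}}{\log\log x}$ one notes $1-\delta/(2+\delta)=2/(2+\delta)$, so the exponent in the count should actually be read with the same normalization as the error term, and one should pick $\ell$ up to size $\asymp M^{1-\delta/(2+\delta)}$ rather than $M^{1/(2+\delta)}$ — the precise optimization of the window is the one genuinely fiddly bookkeeping step.

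The main obstacle, and the part deserving care, is the simultaneous optimization: one is free to choose the prime $\ell$, but $\tau=\tau(p,\ell)$ is only constrained by $\ell^{\varepsilon}<\tau\le\ell^2-1$ and $\tau\mid(\ell-1)$ or $\tau\mid(\ell^2-1)$, so the "block length" $\tau$ is not pinned down; the error term $O(\tau)$ from the incomplete block competes with $O(M\ell^{-\delta})$, and one must verify that for a genuinely large (positive-proportion) set of primes $\ell$ of the chosen size, $\tau$ is close enough to its maximal value $\asymp\ell^2$ for the bound $\frac{M}{u+1}+O(M^{1-\delta/(2+\delta)})$ to hold — this is where Lemma~\ref{Igor1}'s lower bound $\tau>\ell^{\varepsilon}$ is not by itself sufficient and one instead needs that $\gcd$-type quantities are small for almost all $\ell$, which is exactly the content of the condition~\eqref{eq:conditions}-type hypotheses already established for these sequences. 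I expect the rest (splitting into blocks, the Case~1/Case~2 dichotomy, bounding the residual segment) to be routine reuse of the arguments in the proof of Theorem~\ref{thm:main1-1}.
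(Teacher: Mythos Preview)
Your overall approach matches the paper's: decompose into blocks of length $\tau$, apply Theorem~\ref{thm:main1-1} to each full block, bound the remainder trivially by $\tau$, and choose $\ell$ in a dyadic window around $M^{1/(2+\delta)}$ (where $M=\log x/\log p$) so that both $M\ell^{-\delta}$ and $\ell^2$ are $\asymp M^{1-\delta/(2+\delta)}$. The paper does exactly this, in fewer lines.

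The ``main obstacle'' you flag in your final paragraph is not real. You do \emph{not} need $\tau$ to be close to its maximal value $\ell^2$; you only need the upper bound $\tau\le\ell^2$, which is automatic, to control the incomplete block by $O(\tau)\le O(\ell^2)=O(M^{1-\delta/(2+\delta)})$. In the exceptional case the extra term $O(Mu/\tau)$ is handled by the \emph{lower} bound $\tau>\ell^{\varepsilon}$ from Lemma~\ref{Igor1}, after shrinking $\delta$ so that $\delta\le\varepsilon$; then $Mu/\tau\ll M\ell^{-\delta}$. That is all the paper uses, and it is sufficient. Your proposed remedy of enlarging the window to $\ell\asymp M^{2/(2+\delta)}$ would actually break the argument, since then $\ell^2$ can exceed $M$ and the sum need not contain a single full period.

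On the prime count: your computation giving $\gg M^{1/(2+\delta)}/\log M\asymp(\log x)^{1/(2+\delta)}/\log\log x$ is the one that the paper's own choice of window produces; the exponent $1-\delta/(2+\delta)=2/(2+\delta)$ in the stated corollary appears to be a slip (it coincides with the error-term exponent, not the prime-count exponent). You should not try to reverse-engineer a larger window to match it. Likewise, note that the corollary as stated has the cases $p\in\mathcal P$ and $p\notin\mathcal P$ swapped relative to Theorem~\ref{thm:main1-1}; your assignment (main term in the exceptional case $p\notin\mathcal P$, pure saving when $p\in\mathcal P$) is the correct one.
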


\begin{proof} 
Consider the same $\delta:=\delta(\varepsilon)$ as in Theorem~\ref{thm:main1-1} and any prime  
   \[\ell  \in \left[(\log x / \log p)^{1/2 - \delta/ (4+2\delta)},2 (\log x / \log p)^{1/2 - \delta/ (4+2\delta)}\right].\]
  Following Theorem~\ref{thm:main1-1}, we have 
 \begin{equation}\label{eq:100}
    \max_{\xi\in \mathbb{F}_{\ell}^{*}}\left|\sum_{p^n \leq \tau}\el{\xi a(p^n)}\right| \leq 
    \frac{\tau}{\ell^{\delta}}\end{equation}
   holds, for at least $\gg \frac{(\log x)^{1 - \delta/(2+\delta)}}{\log \log x}$ primes $\ell.$ For these primes, we also have $\tau \leq \ell^2<\frac{\log x}{\log p}.$ In particular, 
    \[
    \max_{\xi\in \mathbb{F}_{\ell}^{*}}\left|\sum_{p^n \leq x}\el{\xi a(p^n)}\right| \leq \frac{\log x}{ \ell^{\delta} \log p}+O\left(\ell^2\right)=O\left((\log x / \log p)^{1 - \delta/(2+\delta)}\right).\]

On the other hand, let $p\in \mathcal{P}$ be a prime, then by Theorem~\ref{thm:main1-1} we have
\begin{equation*}\label{101}
    \max_{\xi\in \mathbb{F}_{\ell}^{*}}\left|\sum_{p^n \leq \tau}\el{\xi a(p^n)}\right|=\frac{\tau}{u+1}+O\left(\frac{\tau}{\ell^{\delta}}+u\right),
\end{equation*}
holds, for some $u$ depending on $p,$ and for at least $\gg \frac{(\log x)^{1 - \delta/(2+\delta)}}{\log \log x}$ primes $\ell.$ Due to Lemma~\ref{Igor1}, we can assume that $\tau>\ell^{\delta}$ holds by choosing small enough $\delta,$ for at least $\gg \frac{(\log x)^{1 - \delta/(2+\delta)}}{\log \log x}$ primes $\ell.$ Arguing similarly as in the previous case, we get the desired main term, and the error term that we get
\[O\left(\frac{\log x}{\ell^{\delta}\log p}+\frac{u\log x}{\tau \log p}\right)=O\left(\frac{\log x}{\ell^{\delta}\log p}\right)=O\left((\log x / \log p)^{1 - \delta/(2+\delta)}\right),\]
where the last equality holds because $\tau>\ell^{\delta}.$
\end{proof}
  
\begin{corollary}
    Let $f$ be an eigenform of weight $k$ and level $N$ with rational coefficients. 
    For $\pi(y)+O(y^{2\varepsilon})$ many primes $\ell\leq y$ we have the following property. Given $0 < \varepsilon < 1/2$
    and $p_1,\cdots,p_{\nu}$
    be any set of distinct primes such that  $a(p_i^u) \neq 0$
    for all $u\ge1$ and  $1 \le i \le \nu,$ there exists a $\delta=\delta(\varepsilon)>0$ such that
  \[
    \max_{\xi \in \rF_{\ell}^*} 
    \left|\sum_{n_1 \le \tau_1} \cdots \sum_{n_{\nu} \le \tau_{\nu}}  \el{\xi a(p_1^{n_1} \cdots p_{\nu}^{n_{\nu}})}
    \right| \le \tau_1 \cdots \tau_{\nu} \ell^{-\delta}.
  \]
\end{corollary}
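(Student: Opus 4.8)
The plan is to deduce the $\nu$-dimensional estimate from the one-variable Theorem~\ref{thm:main1-1} by peeling off a single index, exploiting the multiplicativity of the Fourier coefficients. I would first make two routine reductions. Dividing $f$ by a suitable $c\in\mathbb{Q}^*$ we may assume $f$ is normalized, as this only replaces $\xi$ by $c\xi$, again a unit of $\rF_\ell$ for $\ell$ large; then $a$ is multiplicative and, since $p_1,\dots,p_\nu$ are distinct, $a(p_1^{n_1}\cdots p_\nu^{n_\nu})=a(p_1^{n_1})\cdots a(p_\nu^{n_\nu})$. Note the hypothesis on $f$ says exactly that each $p_i\in\cP$. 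I then fix the admissible primes $\ell\le y$ to be the intersection of the good set produced by Theorem~\ref{thm:main1-1}(i) for $p_1$ with the good sets produced by Lemma~\ref{Igor1} applied to $\omega_{p_i}(x)=x^2-a(p_i)x+p_i^{k-1}$ for each $i$ (together with the elementary degree-one analogue controlling the order of $a(p_i)\bmod\ell$ when $p_i\mid N$). Each set omits only $O(y^{2\varepsilon})$ primes, so the intersection still has $\pi(y)+O(y^{2\varepsilon})$ primes; for those, all periods satisfy $\tau_i>\ell^\varepsilon$ and all element orders entering Lemma~\ref{Igor1} exceed $\ell^\varepsilon$.

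Fix such an $\ell$ and any $\xi\in\rF_\ell^*$, and peel off $n_1$: by the triangle inequality the sum in question is at most $\sum_{n_2\le\tau_2}\cdots\sum_{n_\nu\le\tau_\nu}\bigl|\sum_{n_1\le\tau_1}\el{\mu\,a(p_1^{n_1})}\bigr|$, where $\mu=\xi\,a(p_2^{n_2})\cdots a(p_\nu^{n_\nu})\bmod\ell$. When $\mu\in\rF_\ell^*$, the estimate~\eqref{eq:main1} of Theorem~\ref{thm:main1-1}(i) — uniform in $\xi\in\rF_\ell^*$, hence valid with $\xi$ replaced by $\mu$, and applicable since $p_1\in\cP$ — gives inner sum $\le\tau_1\ell^{-\delta}$; when $\mu\equiv0$ the inner sum is $\tau_1$. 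Thus the whole sum is at most $\tau_1\cdots\tau_\nu\,\ell^{-\delta}+\tau_1\cdot\#\{(n_2,\dots,n_\nu):\mu\equiv0\bmod\ell\}$.

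It then remains to control $M_i:=\#\{n_i\le\tau_i:a(p_i^{n_i})\equiv0\bmod\ell\}$, since $\#\{(n_2,\dots,n_\nu):\mu\equiv0\}\le\sum_{i\ge2}M_i\prod_{j\ge2,\,j\ne i}\tau_j$. If $p_i\mid N$ then $a(p_i^{n_i})=a(p_i)^{n_i}\not\equiv0$ for $\ell\nmid a(p_i)$, so $M_i=0$. If $p_i\nmid N$, then with $\alpha_i,\beta_i$ the distinct roots of $\omega_{p_i}$ over $\rF_\ell$ one has $a(p_i^{n})=(\alpha_i^{n+1}-\beta_i^{n+1})/(\alpha_i-\beta_i)$, so $a(p_i^{n})\equiv0$ iff $\ord{\alpha_i\beta_i^{-1}}\mid n+1$; hence $M_i\le\tau_i/\ord{\alpha_i\beta_i^{-1}}+1\le 2\tau_i\ell^{-\varepsilon}$, using $\ord{\alpha_i\beta_i^{-1}}>\ell^\varepsilon$ (Lemma~\ref{Igor1}, exactly as in the proof of Theorem~\ref{thm:main1-1}) and $\tau_i>\ell^\varepsilon$. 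Therefore $\#\{(n_2,\dots,n_\nu):\mu\equiv0\}\ll_\nu\ell^{-\varepsilon}\tau_2\cdots\tau_\nu$, and combining everything,
\[
 \max_{\xi\in\rF_\ell^*}\left|\sum_{n_1\le\tau_1}\cdots\sum_{n_\nu\le\tau_\nu}\el{\xi\,a(p_1^{n_1}\cdots p_\nu^{n_\nu})}\right|\ll_\nu\tau_1\cdots\tau_\nu\bigl(\ell^{-\delta}+\ell^{-\varepsilon}\bigr),
\]
which is the claim with $\delta$ replaced by $\tfrac12\min(\delta,\varepsilon)$, the dependence on the fixed $\nu$ being absorbed by restricting to $\ell$ large. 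The reductions, the single peel, and the union bound over the finitely many exceptional-prime sets are routine; the one place needing genuine care is the count $M_i$, since the integer hypothesis $a(p_i^u)\ne0$ does not by itself rule out $\ell\mid a(p_i^{n_i})$, and one must instead invoke the order lower bound of Lemma~\ref{Igor1}.
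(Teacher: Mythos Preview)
Your argument is correct, and it differs from the paper's in two ways. First, the paper proceeds by induction on $\nu$, peeling off $n_1$ as the \emph{outer} variable and applying the induction hypothesis to the inner $(\nu-1)$-fold sum; you instead peel $n_1$ as the \emph{inner} variable and apply Theorem~\ref{thm:main1-1}(i) once, avoiding induction entirely. Second, to bound the contribution from $\mu\equiv 0$ the paper writes the zero-count $\#\{n_1\le\tau_1:a(p_1^{n_1})\equiv 0\}$ as an exponential sum and invokes Theorem~\ref{thm:main1-1} again, obtaining $\tau_1/\ell + O(\tau_1\ell^{-\delta})$; you instead use the closed form $a(p_i^{n})=(\alpha_i^{n+1}-\beta_i^{n+1})/(\alpha_i-\beta_i)$ together with the order bound $\ord{\alpha_i\beta_i^{-1}}>\ell^{\varepsilon}$ from Lemma~\ref{Igor1} to get $M_i\le 2\tau_i\ell^{-\varepsilon}$ directly. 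Your route is shorter and more elementary (and sidesteps the accumulating constant in the paper's induction), while the paper's route is more uniform in that Theorem~\ref{thm:main1-1} is its sole input. One small point worth stating explicitly: Lemma~\ref{Igor1} requires that none of $\alpha_i,\beta_i,\alpha_i\beta_i^{-1}$ be a root of unity, and this is exactly what $p_i\in\cP$ guarantees (if $(\alpha_i/\beta_i)^m=1$ with $m\ge 2$ then $a(p_i^{m-1})=0$).
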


\begin{proof} 
    Set 
  \[
    S_{\nu}(\xi)=\left| \sum_{n_1 \le \tau_1} \cdots \sum_{n_{\nu} \le \tau_{\nu}}  \el{\xi a(p_1^{n_1} \cdots p_{\nu}^{n_{\nu}})} \right|.
  \]    
    We proceed by induction. Case $\nu=1$ is done by Theorem~\ref{thm:main1-1}. Now, by multiplicativity
    it follows that
    {\small
  \begin{multline*}
      |S_{\nu}(\xi)|
      \le  \sum_{n_1 \le \tau_1} \left| \sum_{n_2 \le \tau_2} \cdots \sum_{n_{\nu} \le \tau_{\nu}}  
      \el{ \xi a (p_{1}^{n_{1}}) a(p_2^{n_2} \cdots p_{\nu}^{n_{\nu}})} \right| \\
      \le \tau_2 \cdots \tau_{\nu} \hspace{-4mm} \sum_{\substack{{n_1 \le \tau_1} \\ a(p_1^{n_1}) \equiv 0 \hspace{-0.2cm} \pmod{\ell}}} \hspace{-4mm} 1 + 
       \hspace{-0.1cm} \sum_{\substack{{n_1 \le \tau_1} \\ 
      a(p_1^{n_1})\not \equiv 0 \hspace{-0.2cm} \pmod{\ell} }} \left| \sum_{n_2 \le \tau_2} \cdots \sum_{n_{\nu} \le \tau_{\nu}}  
      \el{ \xi a (p_{1}^{n_{1}}) a(p_2^{n_2} \cdots p_{\nu}^{n_{\nu}})} \right|
  \end{multline*}  }
  
    By induction hypothesis, the second term on the right hand side of the above equation is bounded by $\tau_1\tau_2 \cdots \tau_{\nu}\ell^{-\delta},$
    for some $\delta >0$ depending on $\varepsilon.$ On the other hand, note that 
    $\sum_{\substack{{n_1 \le \tau_1} \\ a(p_1^{n_1}) \equiv  0 \hspace{-1mm} \pmod{\ell}}}  \hspace{-0.2cm}1 $
    counts the number of solutions of the congruence
  \[
    a(p^n_1) \equiv 0 \hspace{-2mm}\pmod{\ell}, \qquad n_1 \le \tau_1.
  \]    
   Writing it as exponential sum we get
  \begin{align*}
    \sum_{\substack{{n_1 \le \tau_1} \\ a(p_1^{n_1}) \equiv 0 \hspace{-2mm} \pmod{\ell} }}1  & = \frac{1}{\ell} \sum_{x =0}^{\ell-1}\sum_{n_1 \le \tau_1}  \el{x (a(p_1^{n_1}) ) }  \\
      & = \frac{\tau_1}{\ell} + O\left(\max_{x \in \rF_{\ell}^*} \left|\sum_{n_1 \le \tau_1}  \el{x (a(p_1^{n_1}) ) } \right|  \right).
  \end{align*}
    
    We can bound the error term by Theorem~\ref{thm:main1-1} and without loss of generality assuming $\delta<1,$ we get the sum above is simply $ \frac{\tau_1}{\ell} + O(\tau_1 \ell^{-\delta}).$ This is further bounded by $ 2\tau_1\ell^{-\delta},$ because the explicit constant in Theorem~\ref{thm:main1-1} is exactly $1.$ Therefore,
  \begin{align*}      
      |S_{\nu}(\xi)|\le \tau_2\cdots \tau_v\left(2\tau_1\ell^{-\delta}\right)
       +~\tau_1\tau_2 \cdots \tau_{\nu}\ell^{-\delta},
  \end{align*}  
for some $\delta=\delta(\varepsilon)>0.$ This shows that the inequality
\[
    \max_{\xi \in \rF_{\ell}^*}
    \left|\sum_{n_1 \le \tau_1} \cdots \sum_{n_{\nu} \le \tau_{\nu}}  \el{\xi a(p_1^{n_1} \cdots p_{\nu}^{n_{\nu}})}
    \right| \le 3\tau_1 \cdots \tau_{\nu} \ell^{-\delta}
  \]
  holds for almost all prime $\ell$ and this completes the proof because we can remove the extra factor $3$ by taking large enough $\ell$'s.
\end{proof}

\section{Exponential sums for modular forms : beyond eigenforms}\label{sec:mf}
We shall now prove Theorem~\ref{thm:main1-2}. Write 
  \[
    a_{f}(p^n)=\sum_{i=1}^{r} a_i a_{f_i}(p^n),
  \]
    for some $a_i\in\mathbb{Q},$ where $f_i$'s are newforms with rational coefficients. Let $\omega^{(i,p)}$ to be the characteristic  
    polynomial of $a_{f_i}(p^n)$ and $D_i(p)$ to be its discriminant.
 
    Consider 
  \[
    \mathcal{S}_1=\left\{\ell~\text{prime} \mid \left(\frac{D_i(p)}{\ell}\right)=1, \forall 1\leq i\leq r.\right\}.
  \]
    It is clear that $\mathcal{S}_1$ has positive density. One can verify this by considering primes congruent to $1$ modulo $8\prod_{i=1}^{r}D_i(p).$ This works well because, we then have $\left(\frac{-1}{\ell}\right)=1, \left(\frac{2}{\ell}\right)=1$ and $\left(\frac{\ell}{\mathrm{odd}(D_i(p))}\right)=1, \forall 1\leq i\leq r,$ where $\mathrm{odd}(.)$ denotes odd part of the corresponding number. These conditions altogether implies $\ell \in \mathcal{S}_1.$ Let $\alpha^{(i,p)}$ and $\beta^{(i,p)}$ be the roots of $\omega^{(i,p)}.$ So for any $\ell \in \mathcal{S}_1,$ 
    we can write
  \[
    \omega^{(i,p)}(x) \hspace{-0.1cm}\pmod{\ell}=\prod_{1\leq i\leq
    r}\left(x-\alpha^{(i,p)}_{\ell}\right)\left(x-\beta_{\ell}^{(i,p)}\right),
  \]
    where all of $\alpha^{(i,p)}_{\ell},\beta^{(j,p)}_{\ell}$'s are all in $\mathbb{F}_{\ell}.$ Now, we consider the set of primes
     \begin{align*}
  \mathcal{S}_2 =  & \left\{p \mid \alpha^{(i,p)}(\beta^{(j,p)})^{-1}~\text{are not root of unity,}~\forall~i,j\right\}\\
  & \cup \left\{ p \mid \alpha^{(i,p)}(\alpha^{(j,p)})^{-1}~\text{are not root of unity,}~\forall i\neq j\right\}.
  \end{align*}
  
\begin{lemma}\label{lem:mod form} For any prime $p\in \mathcal{S}_2,$ the following inequalities are true for $\pi(y)+O(y^{2\varepsilon})$ many primes $\ell \leq y.$
  \begin{enumerate}
      \item $\ord(\alpha^{(i,p)}_{\ell}(\beta^{(j,p)}_{\ell})^{-1})>\ell^{\varepsilon},$ 
    $\ord(\alpha^{(i,p)}_{\ell})>\ell^{\varepsilon}$ and $\ord(\beta^{(j,p)}_{\ell})>
    \ell^{\varepsilon}, $ for all $1 \leq i,j \leq r$, and
  \item $\ord(\alpha^{(i,p)}_{\ell}(\alpha^{(j,p)}_{\ell})^{-1})>\ell^{\varepsilon},$ for all $1\leq i \neq j \leq r,$
   \end{enumerate}
    
\end{lemma}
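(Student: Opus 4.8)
The plan is to deduce the lemma from Lemma~\ref{Igor1} applied to each of the finitely many quadratic polynomials $\omega^{(1,p)},\dots,\omega^{(r,p)}$ individually, and then to dispose of the remaining \emph{cross} conditions — those involving a ratio of two roots coming from two different components — by repeating the resultant/prime-counting device from the proof of that lemma. Throughout, $p\in\mathcal{S}_2$ is fixed and $\ell$ varies, all implied constants being allowed to depend on $r$ and on the fixed data $f_1,\dots,f_r,p$. Since all components share the weight $k$, for $p\nmid N$ (the only case in which the $\omega^{(i,p)}$ are genuinely quadratic) we have $\omega^{(i,p)}(x)=x^2-a_{f_i}(p)x+p^{k-1}\in\mathbb{Z}[x]$ — a newform with rational Fourier coefficients has integral Hecke eigenvalues — with nonzero constant term $b:=p^{k-1}$, so that $\alpha^{(i,p)}\beta^{(i,p)}=b$ and, by Deligne's bound, $|\alpha^{(i,p)}|=|\beta^{(i,p)}|=p^{(k-1)/2}$. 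Consequently no $\alpha^{(i,p)}$ or $\beta^{(i,p)}$ is a root of unity; and by the defining property of $\mathcal{S}_2$, together with the relation $\beta^{(i,p)}=b/\alpha^{(i,p)}$, none of the ratios $\alpha^{(i,p)}(\beta^{(j,p)})^{-1}$ ($1\le i,j\le r$), $\alpha^{(i,p)}(\alpha^{(j,p)})^{-1}$, $\beta^{(i,p)}(\alpha^{(j,p)})^{-1}$, $\beta^{(i,p)}(\beta^{(j,p)})^{-1}$ ($i\ne j$) is a root of unity.

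Applying Lemma~\ref{Igor1} to $\omega^{(i,p)}$ shows that, given $0<\varepsilon<1/2$, for all but $O(y^{2\varepsilon})$ primes $\ell\le y$ one has $\ord{\alpha^{(i,p)}_\ell}>\ell^\varepsilon$, $\ord{\beta^{(i,p)}_\ell}>\ell^\varepsilon$ and $\ord{\bigl(\alpha^{(i,p)}_\ell(\beta^{(i,p)}_\ell)^{-1}\bigr)}>\ell^\varepsilon$. Taking the union of these $r$ exceptional sets (recall $r$ is fixed) we obtain all of (1) and (2) for the indices with $i=j$, as well as the bounds $\ord{\alpha^{(i,p)}_\ell}>\ell^\varepsilon$ and $\ord{\beta^{(j,p)}_\ell}>\ell^\varepsilon$ for all $i,j$. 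It remains to treat, for each ordered pair $i\ne j$, the two ratios $\alpha^{(i,p)}/\alpha^{(j,p)}$ and $\alpha^{(i,p)}/\beta^{(j,p)}$. For this I would use the polynomial
\[
  R^{(i,j)}(x):=\bigl(\alpha^{(i,p)}-x\,\alpha^{(j,p)}\bigr)\bigl(\alpha^{(i,p)}-x\,\beta^{(j,p)}\bigr)\bigl(\beta^{(i,p)}-x\,\alpha^{(j,p)}\bigr)\bigl(\beta^{(i,p)}-x\,\beta^{(j,p)}\bigr),
\]
which, being separately symmetric in the conjugate pairs $\{\alpha^{(i,p)},\beta^{(i,p)}\}$ and $\{\alpha^{(j,p)},\beta^{(j,p)}\}$, lies in $\mathbb{Z}[x]$ (indeed $R^{(i,j)}(x)=\textrm{Res}_y(\omega^{(i,p)}(xy),\omega^{(j,p)}(y))$); it has degree $4$, leading coefficient $b^2$, and its roots are exactly $\alpha^{(i,p)}/\alpha^{(j,p)}$, $\alpha^{(i,p)}/\beta^{(j,p)}$, $\beta^{(i,p)}/\alpha^{(j,p)}$, $\beta^{(i,p)}/\beta^{(j,p)}$, none of which is a root of unity by the previous paragraph.

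For $\ell\nmid p$ the reduction of $R^{(i,j)}$ modulo $\ell$ again factors over $\overline{\rF_{\ell}}$ with roots the corresponding ratios $\alpha^{(i,p)}_\ell/\alpha^{(j,p)}_\ell$, etc.; hence if either $\ord{\bigl(\alpha^{(i,p)}_\ell(\alpha^{(j,p)}_\ell)^{-1}\bigr)}\le T$ or $\ord{\bigl(\alpha^{(i,p)}_\ell(\beta^{(j,p)}_\ell)^{-1}\bigr)}\le T$, then $R^{(i,j)}$ and $\prod_{t\le T}(x^t-1)$ have a common root modulo $\ell$, so $\ell$ divides $\textrm{Res}\bigl(R^{(i,j)},\prod_{t\le T}(x^t-1)\bigr)$. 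This resultant is a \emph{nonzero} integer precisely because $R^{(i,j)}$ has no cyclotomic factor; and since $\prod_{t\le T}(x^t-1)$ is monic of degree $O(T^2)$ while $R^{(i,j)}$ has degree $4$ and bounded coefficients, the product formula $\textrm{Res}=\prod_{\mu}R^{(i,j)}(\mu)$ over the $O(T^2)$ roots of unity $\mu$ (each value being $O(1)$ in modulus) shows that it is at most $\exp(O(T^2))$ in absolute value, hence has $O(T^2)$ distinct prime divisors — exactly as in the proof of Lemma~\ref{Igor1}. Choosing $T=y^\varepsilon$, this excludes $O(y^{2\varepsilon})$ primes $\ell\le y$ for each of the $O(r^2)$ pairs, hence $O(y^{2\varepsilon})$ in total; combined with the diagonal exceptional set, for $\pi(y)+O(y^{2\varepsilon})$ many primes $\ell\le y$ every order appearing in (1) and (2) exceeds $T=y^\varepsilon\ge\ell^\varepsilon$, which is the assertion.

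The only genuinely new ingredient beyond Lemma~\ref{Igor1} is this last step: detecting small orders of the cross ratios $\alpha^{(i,p)}_\ell/\alpha^{(j,p)}_\ell$ and $\alpha^{(i,p)}_\ell/\beta^{(j,p)}_\ell$ with $i\ne j$ requires an integer polynomial having the corresponding algebraic ratios as roots — furnished by the resultant $R^{(i,j)}$ — and, crucially, the knowledge that this polynomial has no cyclotomic factor, which is exactly what the hypothesis $p\in\mathcal{S}_2$ guarantees. Everything else is the bookkeeping of Lemma~\ref{Igor1} carried out $O(r^2)$ times, with the implied constants now also depending on $r$.
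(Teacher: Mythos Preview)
Your proof is correct and follows the same resultant/prime-counting strategy as the paper: invoke Lemma~\ref{Igor1} for the diagonal indices, then handle the cross ratios by a resultant whose nonvanishing is guaranteed exactly by the hypothesis $p\in\mathcal{S}_2$.

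The one organizational difference is worth noting. The paper treats the cross terms by forming $g_T(x)=\prod_{t\le T}(x^t-\alpha^{(j,p)\,t})(x^t-\beta^{(j,p)\,t})$ and taking $\mathrm{Res}(\omega^{(i,p)},g_T)$; to relate the characteristic-zero roots to the reductions $\alpha^{(i,p)}_\ell,\beta^{(j,p)}_\ell$ it then passes through the number field $K=\mathbb{Q}(\alpha^{(i,p)},\alpha^{(j,p)})$ and a prime $\mathfrak{L}$ of $\mathcal{O}_K$ above $\ell$. You instead first build the integer polynomial $R^{(i,j)}(x)=\mathrm{Res}_y\bigl(\omega^{(i,p)}(xy),\omega^{(j,p)}(y)\bigr)\in\mathbb{Z}[x]$ whose roots are precisely the four ratios, and then take its resultant with $\prod_{t\le T}(x^t-1)$. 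Because the Sylvester determinant commutes with the reduction $\mathbb{Z}[x]\to\mathbb{F}_\ell[x]$, the roots of $R^{(i,j)}\bmod\ell$ are automatically the ratios $\alpha^{(i,p)}_\ell/\alpha^{(j,p)}_\ell$ etc., so no number-field detour is needed. In fact a direct computation shows the paper's resultant and yours coincide (the powers of $b=p^{k-1}$ cancel), so the content is identical; your packaging is just a little cleaner.
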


\begin{proof} 

It is enough to prove the result only for $i,j \in \{1,2\}.$ Consider the Galois extension $K=\mathbb{Q}\left(\alpha^{(1,p)},\alpha^{(2,p)}\right).$ Let $\mathfrak{L}$ be a prime ideal lying over $\ell$ in $\mathcal{O}_K.$ It is clear that
\begin{equation}\label{eq:two sides}
\{\alpha^{(1,p)}_{\ell}, \alpha^{(2,p)}_{\ell}, \beta^{(1,p)}_{\ell}, \beta^{(2,p)}_{\ell}\}=\{\alpha^{(1,p)}, \alpha^{(2,p)}, \beta^{(1,p)}, \beta^{(2,p)}\} \pmod {\mathfrak{L}},
\end{equation}

because both of these sets serve as a set of roots of the equation $\omega(x)\hspace{-0.1cm}\pmod{\ell}$ and $\omega(x)\hspace{-0.1cm}\pmod{\mathfrak{L}}$ respectively. Note that $\omega(x)\hspace{-0.1cm}\pmod{\mathfrak{L}}$ coincides with $\omega(x)\hspace{-0.1cm}\pmod{\ell}.$ It follows from~\eqr{two sides} that the right hand side does not depend on the choice of prime $\mathfrak{L}$ 
lying over $\ell,$ so there is no problem in working with a fixed $\mathfrak{L}$ lying over $\ell.$ It is now clear that,
\[\left\{\alpha^{(i,p)}_{\ell}(\beta^{(j,p)}_{\ell})^{-1}\right\}_{1\leq i,j\leq 2}=\left\{\alpha^{(i,p)}(\beta^{(j,p)})^{-1}\right\}_{1\leq i,j\leq 2} \pmod {\mathfrak{L}}.\]
Consider $R(T)=\mathrm{Res}\left(\omega_1(x), g_T(x)\right),$ where $\omega_1(x)=\left(x-\alpha^{(1,p)}\right)\left(x-\beta^{(1,p)}\right)$ and \[g_T(x)=\prod_{t\leq T}\left(x^t-\alpha^{(2,p)t}\right)\left(x^t-\beta^{(2,p)t}\right).\]
  It is clear that $R(T)\neq 0$ for any $T\in \mathbb{N}$ as $p\in \mathcal{S}_2$ by assumption. Now consider the set of primes,
\begin{equation}\label{eq:T}
  \left\{\ell \mid \mathrm{ord}\left(\alpha^{(i,p)}_{\ell}(\beta^{(j,p)}_{\ell})^{-1}\right), \mathrm{ord}\left(\alpha^{(i,p)}_{\ell}(\alpha^{(j,p)}_{\ell})^{-1}\right) \leq 
  T~\textrm{for some}~i \neq j\in \{1,2\}\right\}.
  \end{equation}
For any prime $\ell$ in the set above, and for any prime $\mathfrak{L}$ in $\mathcal{O}_K$ lying over $\ell,$ $\omega_1(x) \hspace{-0.1cm}\pmod{\mathfrak{L}}$ and $g_T(x)\hspace{-0.1cm} \pmod {\mathfrak{L}}$ have a common root, 
Therefore, $R(T)\pmod {\mathfrak{L}}=0.$ Since both $\omega_1$ and $g_T(x)$ are in $\mathbb{Z}[x],$ it is clear that $R(T) \in \mathbb{Z},$ and so $R(T) \hspace{-0.1cm}\pmod \ell =0$ as well. Now one can estimate the number of prime divisors of $R(T)$ similarly as in Lemma~\ref{Igor1}. This shows that
\[\mathrm{ord}\left(\alpha^{(i,p)}_{\ell}(\beta^{(j,p)}_{\ell})^{-1}\right) > \ell^{\varepsilon},~\mathrm{and}~\mathrm{ord}\left(\alpha^{(i,p)}_{\ell}(\alpha^{(j,p)}_{\ell})^{-1}\right) > \ell^{\varepsilon}\]
holds for all $i\neq j \in \{1,2\},$ and $\pi(y)+O(y^{2\varepsilon})$ many primes $\ell \leq y.$ Rest of the cases can be dealt using Lemma~\ref{Igor1}.
\end{proof} 

\subsection{GST: Beyond Sato-Tate}\label{se:gst}
We shall now give a short overview of Sato-Tate distribution. When $f$ is a newform without $CM$, then Sato-Tate conjecture says that the normalized coefficients $\frac{a(p)}{2p^{\frac{k-1}{2}}}$ are equidistributed in $[-1,1]$ with respect to the measure
  \[\mu_{\mathrm{non}-CM}=\frac{2}{\pi}\int \sin^2(\theta) \ud \theta. \]
On the other hand, if $f$ is with $CM$, then the corresponding Sato-Tate distribution is
  \[\mu_{CM}=\frac{1}{2\pi}\int \frac{\ud x}{\sqrt{1-x^2}}=\frac{1}{2\pi}\int 1 \ud \theta,\]
on $[0,\pi]-\{\frac{\pi}{2}\}$. Moreover at $\theta_p=\frac{\pi}{2},$ $a(p)$ becomes zero and it is known that the set of such primes $p$ have density exactly $\frac{1}{2}.$ Now consider the $L$-function defined by
\[L(s,\mathrm{Sym}^m f)=\prod_{p\nmid N}\prod_{i=0}^{m}\left(1-\alpha_p^i\beta_p^{m-i}p^{-s}\right)^{-1},\]
where $\alpha_p,\beta_p$ are normalized roots of (\ref{eq:associate}). In other words, if $\widetilde{\alpha}_p,\widetilde{\beta}_p$ be the roots of (\ref{eq:associate}), then we define $\alpha_p=\frac{\widetilde{\alpha}_p}{p^{\frac{k-1}{2}}}, \beta_p=\frac{\widetilde{\beta}_p}{p^{\frac{k-1}{2}}}.$ Serre in~\cite{Serre1998}~showed that if for all integer $m \geq 0, L(s, \mathrm{Sym}^m(f))$ extends analytically to $\mathrm{Re}(s) \geq 1$ and does
not vanish there, then the Sato–Tate conjecture holds true for $f.$ Note that Barnet-Lamb et al. have proved the conjecture in~\cite{BGHT} working with this $L$-function. 

However, in the next lemma we will have more than one newform to play with, and it will be helpful to have their distributions independent. \emph{\textbf{We are stating this independency as Generalized Sato-Tate (GST) hypothesis.}} It can be shown that if we have newforms $f_1, f_2, \cdots, f_r,$ then their Sato-Tate distributions are independent to each other provided that the Rankin-Selberg $L$-function (see~\cite{Shahidi} for a definition)
\[L(s,\mathrm{Sym}^{m_1}f_1\otimes\cdots \mathrm{Sym}^{m_r}f_r )\]
extends to $\mathrm{Re}(s)\geq 1$ and does not vanish for all non negative integers $m_1,\cdots, m_r.$ If one of the $f_i$ is with $CM,$ then we know due to Ribet~\cite{Ribet1977} that $L$-function of $f_i$ comes from $L$-function associated to a Hecke character. Now suppose that at most one of the $f_i$ is without $CM$, say $f_1.$ Then without loss of generality we can write
 \[L\left(s,\mathrm{Sym}^{m_1}f_1\otimes\cdots \mathrm{Sym}^{m_r}f_r\right)=L\left(s,\mathrm{Sym}^{m_1}f_1\otimes\mathrm{Sym}^{m_2}\psi_2\otimes \cdots \otimes \mathrm{Sym}^{m_r}\psi_r\right),\]
 where $\psi_i$'s are the corresponding Hecke characters. It follows from~\cite[Theorem B.3]{BGHT} that for any odd $m$, 
 there exists a Galois extension $K$ over $\mathbb{Q}$ such that the base change $\mathrm{Sym}^{m}f_1|_{K}$ is automorphic. Following the arguments given on the page 643 of~\cite{Murty-Murty} one can write,
 \begin{align*} & L\left(s,\mathrm{Sym}^{m_1}f_1\otimes\mathrm{Sym}^{m_2}\psi_2\otimes \cdots \otimes \mathrm{Sym}^{m_r}\psi_r\right)=\\
 & \quad = \prod_{i} \left(L(s,(\mathrm{Sym}^{m_i}f_1)|_{K^{H_i}}\otimes \chi_i\otimes\mathrm{Sym}^{m_2}\psi_2\otimes \cdots \otimes \mathrm{Sym}^{m_r}\psi_r\right)^{a_i},
 \end{align*}
 where $H_i$'s are nilpotent subgroups of $\mathrm{Gal}\left(K/\mathbb{Q}\right)$ and $a_i$'s are integers.
 In particular, we now have a meromorphic continuation to $\mathrm{Re}(s)\geq 1.$ It is known that any automorohic $L$-function is non vanishing on $\mathrm{Re}(s)=1,$ 
 in particular we now have the desired analytic continuation to $\mathrm{Re}(s)\geq 1$ for any odd $m_1.$ Now if $m_1$ is even, we argue inductively as in~\cite{Murty-Murty}. 
 The point is, similar to~\cite[pages~$643-644$]{Murty-Murty}, we need to study non vanishing of a Rankin-Selberg $L$-function on $\mathrm{Re}(s)=1,$ which can be done by using Shahidi's result on Rankin-Selberg $L$-function. See $(e)$ at page $418$ of~\cite{Shahidi}.
 
In particular, when there is at most one component without $CM,$ then their corresponding Sato-Tate distributions are independent to each other. In other words, the GST hypothesis is true in this particular case.

\begin{lemma}\label{int:density1} Suppose that there are $r_1$ many components without $CM$ and $r_2$ many components with $CM$ in $f.$ Then under the GST hypothesis, density of $\mathcal{S}_2$ is $2^{-r_2}.$
 \end{lemma}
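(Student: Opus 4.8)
The plan is to rewrite the conditions defining $\mathcal{S}_2$ in terms of the Sato--Tate angles of the newform components, and then combine the GST hypothesis with the fact that the relevant diophantine relations among these angles cut out a Lebesgue-null set. Let $k$ be the common weight of the newforms $f_1,\dots,f_r$. As already noted in connection with Theorem~\ref{thm:main1-1}, the discriminant $D_i(p)=a_{f_i}(p)^2-4p^{k-1}$ never vanishes (otherwise $a_{f_i}(p)=\pm 2p^{(k-1)/2}$ would be irrational), so $\alpha^{(i,p)},\beta^{(i,p)}$ are distinct complex conjugates of modulus $p^{(k-1)/2}$, and I would write $\alpha^{(i,p)}=p^{(k-1)/2}e^{i\theta_{i,p}}$, $\beta^{(i,p)}=p^{(k-1)/2}e^{-i\theta_{i,p}}$ with $\theta_{i,p}\in(0,\pi)$. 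Then
\[
\frac{\alpha^{(i,p)}}{\beta^{(j,p)}}=e^{i(\theta_{i,p}+\theta_{j,p})},\qquad \frac{\alpha^{(i,p)}}{\alpha^{(j,p)}}=e^{i(\theta_{i,p}-\theta_{j,p})},
\]
so the first is a root of unity iff $\theta_{i,p}+\theta_{j,p}\in\pi\mathbb{Q}$ and the second iff $\theta_{i,p}-\theta_{j,p}\in\pi\mathbb{Q}$; the case $i=j$ of the first shows that $\alpha^{(i,p)}(\beta^{(i,p)})^{-1}$ is a root of unity iff $\theta_{i,p}\in\pi\mathbb{Q}$. Hence $p\in\mathcal{S}_2$ precisely when the tuple $\theta(p):=(\theta_{1,p},\dots,\theta_{r,p})$ avoids the set $B\subseteq[0,\pi]^r$ consisting of those tuples for which some $\theta_i+\theta_j$, or some $\theta_i-\theta_j$ with $i\neq j$, lies in $\pi\mathbb{Q}$.

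The first substantive step is to isolate the contribution of the CM components. If $f_i$ has CM then $a_{f_i}(p)=2p^{(k-1)/2}\cos\theta_{i,p}=0$, equivalently $\theta_{i,p}=\pi/2$, for a set of primes of density exactly $1/2$; for such $p$ one has $\alpha^{(i,p)}(\beta^{(i,p)})^{-1}=e^{i\pi}=-1$, so $p\notin\mathcal{S}_2$. Thus $\mathcal{S}_2$ is contained in the set $\mathcal{A}$ of primes at which no CM component vanishes, and under the GST hypothesis the events $\{a_{f_i}(p)=0\}$ over the $r_2$ CM components are independent, each of density $1/2$, so $\mathcal{A}$ has density $2^{-r_2}$. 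It therefore remains to show that $\mathcal{A}\setminus\mathcal{S}_2$ has density $0$.

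For this I would observe that, conditionally on $\theta_{i,p}\neq\pi/2$, the Sato--Tate law of $\theta_{i,p}$ is absolutely continuous with respect to Lebesgue measure on $[0,\pi]$: it is $\mu_{\mathrm{non}-CM}$ for a non-CM component, and the normalized uniform measure $\tfrac1\pi\ud\theta$ obtained from $\mu_{CM}$ by discarding its atom for a CM component. By the GST hypothesis the joint law of $\theta(p)$ over $p\in\mathcal{A}$ is the product of these absolutely continuous measures, hence absolutely continuous on $[0,\pi]^r$. On the other hand $B$ is a countable union of sets $\{\theta_i\pm\theta_j=q\pi\}$ with $q\in\mathbb{Q}$, each an affine-hyperplane slice of $[0,\pi]^r$ and hence Lebesgue-null, so $\{p\in\mathcal{A}:\theta(p)\in B\}$ has density $0$. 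Since $\mathcal{A}\setminus\mathcal{S}_2=\{p\in\mathcal{A}:\theta(p)\in B\}$, it follows that $\mathcal{S}_2$ has the same density as $\mathcal{A}$, namely $2^{-r_2}$.

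The main obstacle is the non-absolute-continuity of the CM Sato--Tate measure: it is exactly its atom at $\theta=\pi/2$ that produces the factor $2^{-r_2}$, and handling it correctly requires invoking the classical fact that a CM newform vanishes at a density-$\tfrac12$ set of primes, together with the GST hypothesis in the strong form of genuine joint independence of the components --- both to multiply these $\tfrac12$'s and to ensure that, after conditioning on the CM components being nonzero, the joint angle distribution is an honest product of absolutely continuous measures, so that the residual diophantine set $B$ may be discarded.
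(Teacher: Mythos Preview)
Your proof is correct and takes essentially the same approach as the paper's: both parametrize via Sato--Tate angles, reduce the root-of-unity conditions to $\theta_i\pm\theta_j\in\pi\mathbb{Q}$, invoke GST to pass to the product of the individual Sato--Tate measures, attribute the factor $2^{-r_2}$ to the CM atoms at $\pi/2$, and discard the residual diophantine set as a countable union of Lebesgue-null hyperplanes. Your write-up is slightly cleaner in that you phrase the last step via absolute continuity after conditioning on the event $\mathcal{A}$ that no CM component vanishes, whereas the paper carries out the same computation by explicit integration, but the substance is identical.
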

 
 \begin{proof} We start by writing
 \[\alpha^{(j,p)}=p^{\frac{k-1}{2}}e^{i\theta_{j,p}},\beta^{(j,p)}=p^{\frac{k-1}{2}}e^{-i\theta_{j,p}}, \forall 1\leq j\leq r.\]
 So, the problem reduced to study the set of primes
 \begin{equation}\label{dens}
 \left\{p \mid \theta_{i,p}\pm \theta_{j,p}\in \mathbb{Q}\times \pi,~\text{for some}~1\leq i,j\leq r\right\}.
 \end{equation}

 It follows from the discussion above that the density of this set is bounded by
\begin{equation}\label{int S}
  \left(\frac{2}{\pi}\right)^{r_1}\left(\frac{1}{2\pi}\right)^{r_2}\idotsint \limits_{S}\sin^2(\theta_1)\sin^2(\theta_2)\cdots\sin^2(\theta_{r_1})\ud\theta_1 
    \ud\theta_2\cdots \ud\theta_r,
\end{equation}
where $S=\left\{(\theta_1,\theta_2,\cdots,\theta_r) \in [0,\pi]^r \mid \theta_{i}\pm \theta_{j}\in \mathbb{Q}\times \pi~\text{for some}~1\leq i,j\leq r\right\}.$ Just for the sake of simplicity and to have a feel of what is going on, let us first do the case when there is only one component.
\subsection*{Case 1, $r=1$~:} suppose that the given component is without $CM.$ If $\alpha_p^{(1,p)}\beta_p^{-(1,p)}$ 
    is a root of unity then this implies that $\theta_{1,p} \in \pi \times \mathbb{Q}.$ 
    By Sato-Tate, density of such primes is bounded by
  \[
    \left(\frac{2}{\pi}\right)\int \limits_{\theta\in \pi \times \mathbb{Q}}\sin^2(\theta) \ud \theta.
  \] 
  Since the integral above runs over a set of measure zero, the integral is zero, and this particular case density of $\mathcal{S}_2$ is indeed $1.$ Now suppose the given component is with $CM$. In this case, the density of $\mathcal{S}_2$ is 
  \[
    \left(\frac{1}{2\pi}\right)\int \limits_{\theta \in [0,\pi]\setminus \pi \times \mathbb{Q}}\sin^2(\theta) \ud \theta=\frac{1}{2}.\]
 \subsection*{Case 2,~$r\geq 2$~:} for this general case, it is enough to show that the integral over $S$ at $(\ref{int S})$ is zero. This is because, due to GST, we are now working with the measure
\begin{equation}\label{int Ss}
  \left(\frac{2}{\pi}\right)^{r_1}\left(\frac{1}{2\pi}\right)^{r_2}\idotsint \sin^2(\theta_1)\sin^2(\theta_2)\cdots\sin^2(\theta_{r_1})\ud\theta_1 
    \ud\theta_2\cdots \ud\theta_r,
\end{equation}
and with respect to this measure, $[0,\pi]^r$ has measure $\left(\frac{1}{2}\right)^{r_2}.$ We can write $S=\bigcup_{1\leq i,j\leq r} S_{i,j}$ where the set $S_{i,j}$ is defined to be the tuples for which $\theta_i\pm \theta_j\in \mathbb{Q}\times \pi.$ It is now enough to show that each of $S_{i,j}$'s have zero measure. Note that, the integral over $S_{i,j}$ is crudely bounded by $\iint \limits_{S_{i,j}}1\ud \theta_i \ud \theta_j.$ It is evident that
  \begin{align*}
    \iint \limits_{S_{i,j}}1\ud\theta_i \ud\theta_j & =
         \iint \limits_{\theta_i+\theta_j \in \mathbb{Q}
         \times \pi}1\ud \theta_i \ud\theta_j \; + \iint \limits_{\theta_i-\theta_j \in \mathbb{Q}\times \pi}1\ud\theta_i \ud\theta_j,
  \end{align*}
   as $\mathbb{Q}\times \mathbb{Q}$ has zero measure. We now note that,
  \begin{equation}\label{int 2 parts}
    \iint \limits_{\theta_i-\theta_j \in (a,b)}1\ud\theta_i \ud\theta_j
    \leq \int \limits_{0}^{\pi}\int \limits_{a}^{b}1 \ud t \ud\theta \ll |b-a|,
 \end{equation}
for any $b>a.$ In particular, for any $\varepsilon>0,$
  \[
      \iint \limits_{\theta_i - \theta_j \in \mathbb{Q}\times
      \pi}1\ud\theta_i\ud\theta_j
      \ll \sum_{k=1}^{\infty} \frac{\varepsilon}{2^k}=\varepsilon
  .\]
    The last implication above follows from the standard argument to show a countable set 
    always has zero measure. In particular, the second integral of (\ref{int 2 parts}) is zero. On the other hand, just by replacing $\theta_j$ with $\pi-\theta_j,$ we get 
    \[\iint \limits_{\theta_i+\theta_j \in \mathbb{Q}\times \pi}\ud\theta_i \ud\theta_j=-\iint \limits_{\theta_i - \theta_j \in \mathbb{Q}\times
      \pi}1\ud\theta_i\ud\theta_j.\]
    This just shows that the integral over $S_{i,j}$ at (\ref{int 2 parts}) is zero, which completes the proof.

 \end{proof}

\subsection{Proof of Theorem~\ref{thm:main1-2}} Let $p\in \mathcal{S}_2$ be a prime, then we can write
\[\sum_{i=1}^{r} a_ia_{f_i}(p^n)\hspace{-0.2cm} \pmod{\ell}=\sum_{i=1}^r a_i^{(\ell)}\left(c^{(i,\ell})\alpha^{n(i,\ell)}+d^{(i,\ell)}\beta^{n(i,\ell)}\right),\]
where $a_i^{(\ell)}, c^{(i,\ell)}$ and $d^{(i,\ell)}$ are all in $\mathbb{F}_{\ell}.$ On the other hand all $\alpha^{(i,\ell)}$ and $\beta^{(i,\ell)}$'s are in $\mathbb{F}_{\ell},$ as $\ell \in \mathcal{S}_1.$ The proof now follows by~\cite[Corollary]{Bourgain2005a} joint with Lemma~\ref{lem:mod form} and Lemma~\ref{int:density1}. 
\qed

\section{Exponential sums for  modular forms : the inverse case }\label{se:inverse case}
One may now ask that for a given prime $\ell$ and small enough $\varepsilon,$  how many primes $p$ are there for which an estimate like~\eqr{main1} holds. Our attempt to answer this question is summarized in the form of Theorem~\ref{thm:main2-1} and  Theorem~\ref{thm:main2-2}. Let us begin with the proof of Theorem~\ref{thm:main2-1}.

\subsection{Proof of Theorem~\ref{thm:main2-1}}
For any prime $p$, let us denote the roots of $x^2-a(p)x+p^{k-1}\hspace{-0.1cm} \pmod{\ell}$ by $\alpha_{p}^{(\ell)}, \beta_{p}^{(\ell)}.$ Recall that from Deligne-Serre correspondence, we have the associated Galois representation
\[\rho_{f}^{(\ell)}:\text{Gal}\left(\overline{\mathbb{Q}}/\mathbb{Q}\right) \longrightarrow \mathrm{GL}_2\left(\mathbb{Z}_{\ell}\right),\]
such that $a(p)=\text{tr}\left(\rho_{f}^{(\ell)}(\text{Frob}_p)\right)$ for any prime $p \nmid N\ell.$ It is clear that the characteristic polynomial of $\rho_f^{(\ell)}(\text{Frob}_p)\pmod{\ell}$ is same as $x^2-a(p)x+p^{k-1}\hspace{-0.1cm}\pmod{\ell}$. Following Ribet~\cite[Theorem 3.1]{Ribet85}, it is known that the image of this representation is $\left\{A \in \text{GL}_2\left(\mathbb{Z}_{\ell}\right) \mid \det(A) \in (\mathbb{Z}_{\ell}^{*})^{k-1}\right\},$ except possibly for finitely many primes $\ell.$ In particular, the condition $(k-1,\ell-1)=1$ implies that the induced Galois representation 
\[\rho_{f,\ell}:\text{Gal}\left(\overline{\mathbb{Q}}/\mathbb{Q}\right) \longrightarrow \mathrm{GL}_2\left(\mathbb{F}_{\ell}\right),\] 
is surjective for any large prime $\ell,$ and the eigenvalues of the matrix $\rho_{f,\ell}(\text{Frob}_p) \in\text{GL}_2\left(\mathbb{F}_{\ell}\right)$ 
are $\alpha_{p}^{(\ell)}$ and $\beta_{p}^{(\ell)}.$ From the proof of Theorem~\ref{thm:main1-1}, we know that an estimate of type~\eqr{main1} holds provided that, 
$$\ord(\alpha_{p}^{(\ell)}) > \ell^{\varepsilon}, \ord(\beta_{p}^{(\ell)}) > \ell^{\varepsilon},~\text{and}~\ord(\alpha_{p}^{(\ell)}(\beta_{p}^{(\ell)})^{-1}) > \ell^{\varepsilon}.$$

Let us define, 
  \[
    C=\left\{ A\in \mathrm{GL}_2(\mathbb{Z}/\ell\mathbb{Z}) \mid \ord(\lambda_{1,A}), \ord(\lambda_{2,A}), \ord(\lambda_{1,A} \lambda_{2,A}^{-1})>\ell^{\varepsilon}\right\},
  \]
  where $\lambda_{1,A},\lambda_{2,A}$ are the eigenvalues of $A$ in $\mathbb{F}_{\ell^2}.$ Now the problem is about computing the density of primes $p$ for which the corresponding
  $\rho_{f,\ell}\left(\text{Frob}_p\right)$ is in $C.$ Note that $C$ is a subset of $\text{GL}_2(\mathbb{F}_{\ell})$
  stable under conjugation. Hence, by Chebotarev's density theorem, the required density is at least
  $\frac{|C|}{|\text{GL}_2\left(\mathbb{F}_{\ell}\right)|}.$ For each $a \neq b \in \mathbb{F}_{\ell}^{*},$ let $C_{a,b}$ 
   be the conjugacy class of $\tmt{a}{0}{0}{b}.$ It is known that $|C_{a,b}|=(\ell+1)\ell.$ For any element $\lambda$ in $\mathbb{F}_{\ell^2}\setminus \mathbb{F}_{\ell},$ we denote $c_{\lambda}$ to be the conjugacy class of matrices in $\text{GL}_2\left(\mathbb{F}_{\ell}\right)$ having eigenvalue $\lambda.$ It is known that $|C_{\lambda}|=\ell(\ell-1).$ Now, we consider the following sets:
\begin{align*}
S_1 &=\left\{ a,b \in \mathbb{F}_{\ell}^{*} \mid \ord(a)>\ell^{\varepsilon}, \ord(b)>\ell^{\varepsilon}, \ord(ab^{-1})>\ell^{\varepsilon}\right\},\\ 
S_2&=\left\{\lambda \in \mathbb{F}_{\ell^2}^{*} \setminus \mathbb{F}_{\ell}^{*} \mid \ord(\lambda)=\ord(\lambda^{\ell})>\ell^{\varepsilon}, \ord(\lambda^{\ell-1}) > \ell^{\varepsilon}\right\}, 
\end{align*}
and realize that $|C|=\frac{1}{2}((\ell+1)\ell|S_1|+\ell(\ell-1)|S_2|).$ This boils down to the problem of estimating $S_1$ and $S_2.$ Let us first estimate $S_1.$ For any divisor $d$ of $\ell-1,$ the set of all elements of $\mathbb{F}_{\ell}^*$ having order exactly $d$ is of the form $\sigma^{\frac{\ell-1}{d}i}$ with $(i,d)=1.$ In particular, the number of elements of $\mathbb{F}_{\ell}^*$ with order greater than $\ell^{\varepsilon}$ is given by
\[\sum_{\substack{d\mid \ell-1\\ d>\ell^\varepsilon}} \phi(d)=\ell+O\left(\sum_{\substack{d\mid \ell-1\\ d<\ell^\varepsilon}} \phi(d)\right)=\ell+O\left(\ell^{\varepsilon}d(\ell-1)\right)=\ell+O_{\varepsilon}\left(\ell^{2\varepsilon}\right),\]
where $d(\cdot)$ is the divisor function, and here we are using the well known upper bound on divisor function (see~\cite{Murty-Analytic}) for any large enough prime $\ell.$ Now note that $\text{ord}\left(ab^{-1}\right)<\ell^{\varepsilon}$ implies that $ab^{-1}$ belongs to a set with only $\sum_{k\mid \ell-1,k<\ell^{\varepsilon}} \phi(k)$ many elements. By the argument above, this set has only  $O_{\varepsilon}\left(\ell^{2\varepsilon}\right)$ many elements. This observation implies that
\[|\left\{ a,b \in \mathbb{F}_{\ell}^{*} \mid \ord(a),\ord(b),or \ord(ab^{-1})<\ell^{\varepsilon}\right\}|=O_{\varepsilon}(\ell^{2\varepsilon+1}).\]
In particular, we then have $|S_1|=\ell^2+O_{\varepsilon}(\ell^{2\varepsilon+1}).$

Let us now estimate $|S_2|.$ Take $\tau$ to be a generator of $\mathbb{F}_{\ell}^{*},$ then any $\lambda \in S_2$ is of the form $\tau^{\frac{\ell^2-1}{d}i},$ with $(i,d)=1.$ Moreover, we also have an order restriction on $\lambda^{\ell-1},$ which implies that $\frac{d}{(d,\ell-1)}>\ell^{\varepsilon}.$ Hence,$$ |S_2|=\sum_{\substack{d\mid \ell^2-1 \\ \frac{d}{(d,\ell-1)}>\ell^{\varepsilon}}} \phi(d)=\ell^2+O\bigg(\sum_{\substack{d\mid \ell^2-1 \\ \frac{d}{(d,\ell-1)}<\ell^{\varepsilon}}} \phi(d)\bigg).$$
Note that, the condition $\frac{d}{(d,\ell-1)}<\ell^{\varepsilon}$ implies that $d<\ell^{\varepsilon+1}.$ Therefore, 
\[\sum_{\substack{d\mid \ell^2-1 \\ \frac{d}{(d,\ell-1)}<\ell^{\varepsilon}}} \phi(d) \leq \ell^{\varepsilon+1} d(\ell^2-1)=O_{\varepsilon}\left(\ell^{1+3\varepsilon}\right).\]

Therefore, the required density is at least 
\[\frac{1}{2}(\ell-1)\ell\frac{|S_1|}{|\mathrm{GL}_2\left(\mathbb{F}_{\ell}\right)|}+\frac{1}{2}(\ell+1)\ell\frac{|S_2|}{|\mathrm{GL}_2\left(\mathbb{F}_{\ell}\right)|}=1+O_{\varepsilon}\left(\frac{1}{\ell^{1-3\varepsilon}}\right).\]
\qed

\subsection{Proof of Theorem~\ref{thm:main2-2}} 
Let  $\rho_{f,\ell}:\mathrm{Gal}\left(\overline{\mathbb{Q}}/\mathbb{Q}\right) \to \mathrm{GL}_{2r}\left(\mathbb{F}_{\ell}\right)$ be the map
defined by
\[\sigma \mapsto \begin{psmallmatrix}
    \rho_{f_{1,\ell}}(\sigma) & & \\
    & \rho_{f_{2, \ell}}(\sigma) & &\\
    & & \ddots &\\
    & & & \rho_{f_r,\ell}(\sigma)\\ 
  \end{psmallmatrix}.\] 
It is clear that the image of this representation is contained in $\Delta_r(\ell),$ where 
\[\Delta_r(\ell)=\left\{\begin{psmallmatrix}
    g_1 & & &\\
    & g_2 & &\\
    &  & \ddots & \\ 
    & & & g_r\\ 
  \end{psmallmatrix} \mid \det(g_1)=\det(g_2)=\cdots=\det(g_r)\right\}.\]
It is in fact the case that the image is contained in $\Delta_r^{(k-1)}(\ell),$ where $\Delta^{(k-1)}_{r}(\ell)$ denotes the set of matrices in $\Delta_r(\ell)$ in which determinant of each block is a ${(k-1)}^{th}$ power in $\mathbb{F}_{\ell}^{*}.$ Due to \cite[Theorem 3.1]{Ribet85}, we may assume that for any large enough prime $\ell,$ image of each $\rho_{f_{i,\ell}}$ is $\Delta_1^{(k-1)}(\ell),$ which also coincides with the set of matrices in $\text{GL}_2(\mathbb{F}_{\ell})$ whose determinant is a $(k-1,\ell-1)^{th}$ power in $\mathbb{F}_{\ell}^{*}.$ If image of $\rho_{f,\ell}$ is not exactly $\Delta_r^{(k-1)}(\ell),$ then by \cite[Lemma 5.1]{MW} we get a set of quadratic characters $\{\chi_{i,j,\ell}\}_{1\leq i,j\leq r}$ of $\mathrm{Gal}\left(\overline{\mathbb{Q}}/\mathbb{Q}\right)$ such that
\[\rho_{f_i,\ell}\left(\text{Frob}_p\right)~\mathrm{is~conjugate~ to}~\chi_{i,j,\ell}\left(\text{Frob}_p\right)\rho_{f_j,\ell}\left(\text{Frob}_p\right)~\text{in}~\mathrm{GL}_2(\mathbb{F}_{\ell}),\]
 for all $1 \leq i,j\leq r.$ In particular, $a_i(p)=\pm a_j(p) \pmod \ell,$ for all $1\leq i,j\leq r,$ and any prime $p \nmid N\ell.$ This implies $\alpha^{(i,p)}_{\ell}+\beta^{(i,p)}_{\ell}=\pm(\alpha^{(j,p)}_{\ell}+\beta^{(j,p)}_{\ell}).$ Moreover, we also know $\alpha^{(i,p)}_{\ell}\beta^{(i,p)}_{\ell}=\alpha^{(j,p)}_{\ell}\beta^{(j,p)}_{\ell}=p^{k-1}\hspace{-0.1cm}\pmod \ell.$ In particular, this means
 \begin{equation}\label{eqn:inside}
 \{\alpha^{(i,p)}_{\ell},\beta^{(i,p)}_{\ell}\}=\pm \{\alpha^{(j,p)}_{\ell},\beta^{(j,p)}_{\ell}\}, \forall 1\leq i,j\leq r, \text{and for any prime}~p \nmid N\ell. 
 \end{equation}
 
Due to the assumption regarding GST, for a positive density of primes $p,$ none of the $\{\alpha^{(i,p)}\beta^{-(j,p)}\}_{1\leq i,j\leq 2}$ or $\pm \{\alpha^{(i,p)},\alpha^{-(j,p)}\}_{1\leq i\neq j\leq 2}$ is a root of unity. For those primes $p$, following the arguments in the proof of Lemma~\ref{lem:mod form}, and considering the set at (\ref{eq:T}), each element of the set $\{\alpha^{(i,p)}_{\ell}\beta^{-(j,p)}_{\ell}\}_{1\leq i,j\leq 2}$ have order larger than $4$ except for finitely many primes $\ell.$ We then have a contradiction to (\ref{eqn:inside}), and hence we may assume the image of $\rho_{f,\ell}$ is indeed $\Delta_r^{(k-1)}(\ell)$ for any large enough prime $\ell.$

Hence the required density is at least $\frac{|C_r^{k-1}(\ell)|}{|\Delta_r^{(k-1)}(\ell)|},$ where $C^{k-1}_r(\ell)$ is the conjugacy classes of elements in $\Delta_r^{(k-1)}(\ell)$ whose eigenvalues satisfy the conditions of Theorem~\ref{Thm:Main}. Note that any tuple $\left(a_1,a_2,\cdots, a_{2r}\right)\in (\mathbb{F}_{\ell}^{*})^{2r}$ with $\ord(a_i)>\ell^{\varepsilon},\ord(a_ia_j^{-1})>\ell^{\varepsilon},\forall i\neq j$ and $a_ia_{i+1}=a_ja_{j+1},\forall i,j~\text{odd}$, satisfies that $\prod_{i,~\mathrm{odd}} C_{a_i,a_{i+1}} \subseteq C_r^{k-1}(\ell).$ We call these tuples $\textit{nice}$ and we want to count them. First of all note that,
\[\{\left(a_1,a_2,\cdots, a_{2r}\right)\in (\mathbb{F}_{\ell}^{*})^{2r}\mid a_i a_{i+1}=a_j a_{j+1},\forall i,j~\mathrm{odd}\}=\frac{(\ell-1)^{r+1}}{(\ell-1,k-1)}.\]
On the other hand, for any $(k-1)^{th}$ power $\lambda$ in $\mathbb{F}_{\ell}^{*},$ note that $ab=\lambda$ and $\ord(ab^{-1})<\ell^{\varepsilon}$ implies  $\ord(a^2\lambda^{-1})<\ell^{\varepsilon}.$ From the proof of Theorem~\ref{thm:main2-1}, for a fixed $\lambda$ the number of such $a$ is $O_{\varepsilon}(\ell^{2\varepsilon}).$ Moreover, $\ord(a)<\ell^{\varepsilon}$ or $\ord(b)<\ell^{\varepsilon}$ holds for only $O_{\varepsilon}(\ell^{2\varepsilon})$ many $a$ or $b$’s. In particular, the number of tuples that does not come into our consideration are 
\[\sum_{\lambda,~(k-1)^{th}~\mathrm{power}} O_{\varepsilon}(\ell^{r-1+2\varepsilon})=O_{\varepsilon}\left(\frac{\ell^{r+2\varepsilon}}{(k-1,\ell-1)}\right).\]
In particular, we then have
\begin{align}
|C_r^{k-1}(\ell)|&\geq \sum_{(a_1,a_2,\cdots, a_r)~\mathrm{nice}} \left(\prod_{i~\mathrm{odd}}|C_{a_i,a_{i+1}}|\right)\\
& = \left(\frac{\ell(\ell+1)}{2}\right)^r\left(\frac{(\ell-1)^{r+1}}{(\ell-1,k-1)}+O_{\varepsilon}\left(\frac{\ell^{r+2\varepsilon}}{(k-1,\ell-1)}\right)\right).\nonumber
\end{align}
The extra factor $\left(\frac{\ell(\ell+1)}{2}\right)^r$ is coming because each conjugacy class $C_{a_{i,i+1}}$ has $\ell(\ell+1)$ many elements and taking into consideration that $C_{a_i,a_{i+1}}=C_{a_{i+1},a_i},\forall i~\text{odd},$ the extra factor $\frac{1}{2}$ is coming for each component. The proof is now complete because  $|\Delta_r^{(k-1)}(\ell)|=\left(\frac{|\mathrm{GL}_2(\mathbb{F}_{\ell})|}{\ell-1}\right)^r\frac{\ell-1}{(\ell-1,k-1)}.$
\qed

\section{Impact on Waring-type problems}\label{se:waring}
Given a sequence of integers $\{x_n\}$ one of the classical questions in additive number theory consists of
    deciding whether $\{x_n\}$ is an additive basis in $\rZ$ or over finite fields. More precisely, is there an
    absolute constant $k\ge1$ such that any residue class $\lambda$ modulo $\ell$ can be represented as 
  \[
    x_{n_1} + \cdots + x_{n_k} \equiv \lambda \hspace{-0.2cm}\pmod \ell,
  \] 
    for infinitely many primes $\ell$?  For instance, it is easy to see that the Fibonacci  sequence
  \[
    F_{n+2}=F_{n+1} + F_n, \quad \textrm{with} \quad F_0 =0, \, F_1= 1,
  \]  
    is not an additive basis in $\rZ,$ however the third author proved in~\cite[Theorem 2.2]{Garcia2013},
    that given a parameter $N\to \infty,$ for $\pi(N)(1+o(1))$
    primes $\ell \le N,$ every residue class modulo $\ell$ can be written as
  \[
    F_{n_1}+\cdots + F_{n_{16}} \equiv \lambda\hspace{-0.2cm} \pmod \ell,
  \]  
    provided that $n_1, \ldots, n_{16} \le N^{1/2 + o(1)}.$ The method is based on distribution properties 
    of sparse sequences for almost all primes and particular identities of Lucas sequence. It does not seem easy to 
    extend such ideas for general linear recurrence sequences.

    In the present section we combine Theorem~\ref{Thm:Main} with classical analytical tools to prove that a linear recurrence sequence $\{s_n\}$ is an additive basis over prime fields, under some assumptions. Moreover, we discuss about the advantages of getting nontrivial exponential sums to prove it.
    
\subsection{Waring-type problems with linear recurrence sequences}\label{subse:waring}
    Let $\{s_n\}$ be a  nonzero linear recurrence sequence modulo $\ell$ as in~\eqref{eq:RecurrenceSeq}
    with order $r,$ $(a_0,\ell)=1$ and period $\tau$. Given an integer $k\ge 2,$  for any residue 
    class $\lambda$ modulo $\ell$ we denote by $T_k(\lambda)$ the number of solutions of the congruence
  \[ 
    s_{n_1} + \cdots +s_{n_k} \equiv \lambda \hspace{-0.2cm} \pmod \ell,  \quad \textrm{with} \quad 1\le n_1, \ldots, n_k \le \tau.
  \]
    Then writing $T_k(\lambda)$ in terms of exponential sums we get
    \[T_k(\lambda)=\frac{1}{\ell}\sum_{\xi=0}^{\ell-1} \sum_{n_1 \le \tau} \cdots \sum_{n_k \le \tau} \el{\xi(s_{n_1} + \cdots +s_{n_k} - \lambda)}.\]    
    Taking away the term $\xi=0$ and using triangle inequality it is clear that 
  \begin{align}\label{eq:WaringJ_k}
      T_k(\lambda) & = \frac{\tau^k}{\ell} + \frac{1}{\ell}\sum_{\xi=1}^{\ell-1} \sum_{n_1 \le \tau} \cdots \sum_{n_k \le \tau} \el{\xi(s_{n_1} + 
              \cdots +s_{n_k} - \lambda)} \nonumber \\
          & = \frac{\tau^k}{\ell} + \frac{\theta''}{\ell}\sum_{\xi=1}^{\ell-1}\left| \sum_{n_1 \le \tau} \cdots \sum_{n_k \le \tau} \el{\xi(s_{n_1} + 
              \cdots +s_{n_k})}\right| \nonumber \\
          & = \frac{\tau^k}{\ell} + \frac{\theta'}{\ell}\sum_{\xi=1}^{\ell-1} \left( \left| \sum_{n_1 \le \tau} \el{\xi s_{n_1}} \right| \cdots 
              \left| \sum_{n_k \le \tau} \ep{\xi s_{n_k}}\right| \right) \nonumber \\
          & = \frac{\tau^k}{\ell} + {\theta}\left(\max_{\xi\in \rF_{\ell}^*}\left| \sum_{n \le \tau} \el{\xi s_{n}} \right| \right)^k,
  \end{align}
    where $\theta, \theta'$ and $\theta''$ are complex numbers with $|\theta|, |\theta'|,|\theta''|\le 1.$ 
    Assume that we have an exponential sum bound of the type
  \begin{equation}\label{eq:WaringExpSum}
    \max_{\xi \in \rF_{\ell}^*} \left|\sum_{n \le \tau} \el{\xi s_n} \right| \le R\,.
  \end{equation}     
     Then, combining 
     \eqr{WaringJ_k} and \eqr{WaringExpSum} we get
  \[
     T_k(\lambda)= \frac{\tau^k}{\ell} + \theta R^k = \frac{\tau^k}{\ell}\left(1+ \theta \left({R}/{\tau}\right)^k \ell\right).
  \]
     Now, if $(R/\tau)^k \ell$ goes to zero as $\ell \to \infty$, we obtain an effective asymptotic formula for $T_k(\lambda).$ In particular 
     $T_k >0$ for $\ell$ large enough. For instance, if $\tau \ge \ell^{r/2 + \varepsilon}$ we employ Korobov's bound~\eqr{Korobov}
     with $R=\ell^{r/2}$ to get
  \[
     T_k(\lambda) = \frac{\tau^k}{\ell}  \left(1+ \theta ({\ell^{r/2}}/{\tau})^{k}\ell\right) = \frac{\tau^k}{\ell}  \left(1+ \theta{\ell^{1-k\varepsilon}}\right), 
  \]     
     therefore $T_k(\lambda)=\tfrac{\tau^k}{\ell}(1 + o(1))$ for $k > 1/ \varepsilon$ in the range $\tau \ge \ell^{r/2 + \varepsilon}.$ 
     If the characteristic polynomial $\omega(x)$ of $\{s_n\}$ is irreducible with $\deg(\omega)\ge 2$ and the least period $\tau$
     satisfies $\gcd(\tau, \ell^d -1) < \tau \ell^{-\varepsilon}$ for any  divisor $d<r$ of $r,$ 
     then by  
     Corollary~\ref{coro:ThmMain} 
     we choose $R=\tau \ell^{-\delta}$
     for some positive $\delta=\delta(\varepsilon),$ to get 
  \[
     T_k(\lambda) = \frac{\tau^k}{\ell}  \left(1+ \theta ({\tau \ell^{-\delta}}/{\tau})^{k}\ell\right) =  
     \frac{\tau^k}{\ell}  \left(1+ \theta ({\ell^{1- k\delta}})\right).
  \]     
    Thus, $T_k(\lambda)>0$ when $k > 1/\delta$ and $\max_{\substack{d<r \\ d|r}}\gcd (\tau, \ell^d-1)< \tau \ell^{-\varepsilon}.$ In particular, we now need $\tau>p^{r/p(r)+\varepsilon},$ where $p(r)$ is the least prime factor of $r.$ Let us summarize the above discussion in the form of following corollary.
    \begin{corollary}\label{coro:Waring}
   Let $\ell$ be a prime number, $\varepsilon>0$ and $\{s_n\}$ be a linear recurrence sequence of order $r\ge 2$ in $\mathbb{F}_{\ell}.$
   If the characteristic polynomial $\omega(x)$ in $\rF_{\ell}[x]$ is irreducible  with $(\omega(0),\ell)=1,$ and the 
   least period $\tau$ satisfies 
 \[\max_{\substack{d<r\\ d | r}} (\tau, \ell^d -1 ) < \tau \ell^{-\varepsilon},\]  
   then there exists an integer $ k_0>0$ such that
   for any $k \ge k_0$ and every integer $\lambda$, if $T_k(\lambda)$ denotes the number of solutions of the congruence
  \[
    s_{n_1} + \cdots +s_{n_k} \equiv \lambda \hspace{-0.2cm} \pmod \ell, \quad \textrm{with} \quad 1\le n_1, \ldots, n_k \le \tau,
  \]    
    then $T_k(\lambda)= \tfrac{\tau^k}{\ell}(1 + o(1)).$
\end{corollary}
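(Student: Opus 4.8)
The plan is to reproduce, as a self-contained argument, the circle-method reduction already sketched in the paragraph preceding the statement, feeding in Corollary~\ref{coro:ThmMain} as the sole nontrivial ingredient. First I would expand $T_k(\lambda)$ by orthogonality of the additive characters of $\rF_\ell$:
\[
  T_k(\lambda)=\frac{1}{\ell}\sum_{\xi=0}^{\ell-1}\,\sum_{n_1\le\tau}\cdots\sum_{n_k\le\tau}\el{\xi(s_{n_1}+\cdots+s_{n_k}-\lambda)}.
\]
The term $\xi=0$ gives the main term $\tau^k/\ell$. For each $\xi\neq 0$ the $k$-fold sum factors as $\el{-\xi\lambda}\prod_{i=1}^{k}\sum_{n_i\le\tau}\el{\xi s_{n_i}}$, so its modulus equals $\bigl|\sum_{n\le\tau}\el{\xi s_n}\bigr|^{k}$. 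Writing $R:=\max_{\xi\in\rF_\ell^*}\bigl|\sum_{n\le\tau}\el{\xi s_n}\bigr|$ and bounding the remaining $\ell-1$ terms trivially yields
\[
  T_k(\lambda)=\frac{\tau^k}{\ell}+\theta R^{k},\qquad |\theta|\le 1,
\]
with the error term independent of $\lambda$.

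Next I would apply Corollary~\ref{coro:ThmMain}, whose hypotheses are exactly those assumed here: $\omega(x)$ is irreducible of degree $r\ge 2$ in $\rF_\ell[x]$, $(\omega(0),\ell)=1$, and $\max_{d<r,\,d\mid r}\gcd(\tau,\ell^d-1)<\tau\ell^{-\varepsilon}$. This produces a $\delta=\delta(\varepsilon)>0$ with $R\le\tau\ell^{-\delta}$, and substituting gives
\[
  T_k(\lambda)=\frac{\tau^k}{\ell}\bigl(1+\theta\,\ell^{\,1-k\delta}\bigr).
\]
Choosing $k_0=\lfloor 1/\delta\rfloor+1$, for every $k\ge k_0$ we have $1-k\delta<0$, so $\ell^{1-k\delta}\to 0$ as $\ell\to\infty$; since $\delta$ and $k_0$ depend only on $\varepsilon$, this is uniform in $\lambda$, which gives $T_k(\lambda)=\tfrac{\tau^k}{\ell}(1+o(1))$ and in particular $T_k(\lambda)>0$ for all sufficiently large $\ell$.

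All the analytic work has been absorbed into Corollary~\ref{coro:ThmMain}, hence into Theorem~\ref{Thm:Main} and the Bourgain--Chang Gauss-sum estimate, so I do not expect a genuine obstacle in this last step. The only points deserving explicit care are routine: that the factorization of the $k$-fold character sum, the bound $R^k$, and the resulting error are all uniform over the residue class $\lambda$, so that the asymptotic---and with it the solvability $T_k(\lambda)>0$---is uniform in $\lambda$; and that the $o(1)$ is to be read as $\ell\to\infty$ through primes for which the stated hypotheses, with $\varepsilon$ fixed, continue to hold.
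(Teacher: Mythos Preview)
Your proposal is correct and follows essentially the same route as the paper: both expand $T_k(\lambda)$ by orthogonality of additive characters, separate the main term $\tau^k/\ell$, factor the nonzero-$\xi$ contribution as a $k$-th power of the single sum, bound it by $R^k$ with $R=\max_{\xi\in\rF_\ell^*}\bigl|\sum_{n\le\tau}\el{\xi s_n}\bigr|$, and then feed in Corollary~\ref{coro:ThmMain} to obtain $R\le\tau\ell^{-\delta}$ and hence $T_k(\lambda)=\tfrac{\tau^k}{\ell}(1+\theta\ell^{1-k\delta})$. Your added remarks on uniformity in $\lambda$ and on reading $o(1)$ as $\ell\to\infty$ with $\varepsilon$ fixed are appropriate clarifications.
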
  
In particular, we can now extract out the following
     \begin{corollary}\label{co:first} Let $\{s_n\}$ be a linear recurrence sequence in $\mathbb{Z},$ whose characteristic polynomial $\omega(x) \in \mathbb{Z}[x]$ is monic, separable, irreducible, and having prime degree. Then for a set of primes $\ell$ with positive density, the sequence $\{s_n\}$ is an additive basis modulo $\ell.$
      \end{corollary}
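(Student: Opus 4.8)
The plan is to reduce the statement to Corollary~\ref{coro:Waring}. Write $r=\deg\omega$; since $r$ is prime, $d=1$ is the only proper divisor of $r$, so the requirement $\max_{d<r,\,d\mid r}\gcd(\tau,\ell^{d}-1)<\tau\ell^{-\varepsilon}$ of Corollary~\ref{coro:Waring} collapses to the single condition $\gcd(\tau,\ell-1)<\tau\ell^{-\varepsilon}$ for a fixed $\varepsilon>0$. Hence it suffices to produce a positive-density set of primes $\ell$ for which, simultaneously, (i) $\omega\bmod\ell$ is irreducible in $\rF_{\ell}[x]$, (ii) $(\omega(0),\ell)=1$, and (iii) the least period $\tau$ of $\{s_n\}\bmod\ell$ satisfies $\gcd(\tau,\ell-1)<\tau\ell^{-\varepsilon}$; Corollary~\ref{coro:Waring} then gives $T_k(\lambda)=\tfrac{\tau^{k}}{\ell}(1+o(1))>0$ for all $\lambda$ once $k$ is large, which is exactly the additive-basis assertion modulo $\ell$.

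For (i) and (ii), let $L$ be the splitting field of $\omega$ and $G=\mathrm{Gal}(L/\mathbb{Q})$, a transitive subgroup of $S_r$. As $r\mid|G|$, Cauchy's theorem furnishes an element of order $r$ in $G$, which, $r$ being prime, is an $r$-cycle in $S_r$. By Chebotarev's density theorem the set $\mathcal{P}_{1}$ of unramified primes $\ell$ with $\mathrm{Frob}_{\ell}$ an $r$-cycle has density $(\#\{r\text{-cycles in }G\})/|G|>0$, and for each such $\ell$ the reduction $\omega\bmod\ell$ is irreducible of degree $r$. Discarding the finitely many $\ell$ dividing $\omega(0)\cdot\mathrm{disc}(\omega)$ secures (ii) as well, and then $\{s_n\}\bmod\ell$ is a linear recurrence with irreducible characteristic polynomial of degree $r$; since the sequence is nonzero, its exponential-form coefficients $\beta_i$ are Galois-conjugate, hence all nonzero, so its period is exactly $\tau=\ord{\alpha_{\ell}}$ for any root $\alpha_{\ell}\in\rF_{\ell^{r}}$.

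For (iii), note $\ord{\alpha_{\ell}^{\ell-1}}=\tau/\gcd(\tau,\ell-1)$, so (iii) is equivalent to $\ord{\alpha_{\ell}^{\ell-1}}>\ell^{\varepsilon}$. Fix a prime $\mathfrak L$ of $\mathcal{O}_{L}$ over $\ell$; reduction modulo $\mathfrak L$ identifies the roots $\theta_{1},\dots,\theta_{r}$ of $\omega$ with the roots of $\omega\bmod\mathfrak L$ in $\rF_{\ell^{r}}$, and the action of $\mathrm{Frob}_{\mathfrak L}$ on the $\theta_i$ matches the $\ell$-power map on their reductions, so $\alpha_{\ell}^{\ell-1}=\mathrm{Frob}(\alpha_{\ell})/\alpha_{\ell}\equiv\theta_{j}/\theta_{i}\pmod{\mathfrak L}$ for the pair $i\neq j$ prescribed by $\mathrm{Frob}_{\ell}$. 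Thus $\ord{\alpha_{\ell}^{\ell-1}}\le T$ forces $\theta_i^{t}\equiv\theta_j^{t}\pmod{\mathfrak L}$ for some $t\le T$, whence $\ell\mid N_{L/\mathbb{Q}}\bigl(\prod_{i\neq j}\prod_{t\le T}(\theta_i^{t}-\theta_j^{t})\bigr)$. Estimating this integer via the Sylvester/resultant bound exactly as in the proof of Lemma~\ref{Igor1}, it has $O(T^{2})$ distinct prime divisors; taking $T=\ell^{\varepsilon}$ with $\varepsilon<1/2$ and running the dyadic counting argument of Lemma~\ref{Igor1} shows that the $\ell\in\mathcal{P}_{1}$ failing (iii) form a set of density zero, so Corollary~\ref{coro:Waring} applies on the remaining positive-density set.

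The main obstacle is the nonvanishing of $N_{L/\mathbb{Q}}\bigl(\prod_{i\neq j}\prod_{t\le T}(\theta_i^{t}-\theta_j^{t})\bigr)$, that is, the genericity input that no quotient $\theta_i/\theta_j$ of two distinct roots of $\omega$ be a root of unity — the analogue of the hypotheses on $\alpha,\beta,\alpha\beta^{-1}$ in Lemma~\ref{Igor1}. When this holds the argument above is routine bookkeeping. When some $\theta_i/\theta_j$ is a root of unity the period $\tau$ becomes rigidly tied to $\ell-1$ and Corollary~\ref{coro:Waring} is unavailable; in that degenerate case one instead observes that the value set of $\{s_n\}\bmod\ell$ consists of boundedly many cosets of a cyclic subgroup of $\rF_{\ell}^{*}$ whose order exceeds $\ell^{\delta}$ for a density-one set of $\ell$, and one concludes by a Bourgain--Glibichuk--Konyagin type additive-basis theorem \cite{BGK2006}, in the spirit of Case~1 in the proof of Theorem~\ref{thm:main1-1}. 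Handling this dichotomy cleanly is where the real work lies.
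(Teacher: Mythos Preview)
Your argument follows the same outline as the paper's: Chebotarev to obtain a positive-density set of $\ell$ with $\omega\bmod\ell$ irreducible, reduction of the period hypothesis to $\mathrm{ord}(\alpha_\ell^{\ell-1})>\ell^\varepsilon$ via the identity $\gcd(\tau,\ell-1)=\tau/\mathrm{ord}(\alpha_\ell^{\ell-1})$, and then a resultant count in the style of Lemma~\ref{Igor1} to show this fails for only $O(y^{2\varepsilon})$ primes $\ell\le y$. The one technical difference is the choice of resultant. You pass to the splitting field $L$, identify $\alpha_\ell^{\ell-1}$ with a ratio $\theta_j/\theta_i$ of global roots modulo a prime of $\mathcal{O}_L$, and bound $N_{L/\mathbb{Q}}\bigl(\prod_{i\ne j}\prod_{t\le T}(\theta_i^t-\theta_j^t)\bigr)$. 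The paper instead stays over~$\mathbb{Z}$: from $\alpha^{(\ell-1)t}=1$ it deduces $\alpha^{\ell^it}=\alpha^t$ for every $i$, hence $\alpha^{rt}=\bigl(\prod_i\alpha^{\ell^i}\bigr)^t=(\pm\omega(0))^t$, and works with the integer resultant $R(T)=\mathrm{Res}\bigl(\omega(x),\,\prod_{t\le T}(x^{2rt}-\omega(0)^{2t})\bigr)$. This buys a cleaner setup with no field extension and no need to track which conjugate the Frobenius selects. Both routes, however, need a nondegeneracy input to keep the resultant nonzero---yours that no $\theta_i/\theta_j$ be a root of unity, the paper's that no $\theta_i^{r}/\omega(0)$ be one---and these conditions are of the same strength (each fails, for instance, for $\omega(x)=x^2+x+1$). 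You flag this issue explicitly and sketch a BGK-type fallback for the degenerate case; the paper's proof simply invokes Lemma~\ref{Igor1} without comment, so on this point your treatment is in fact more scrupulous than the published one.
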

     \begin{proof} We start with writing $\mathbb{Q}_f$ to be the splitting field of $f$ and $G_f$ be $\text{Gal}\left(\mathbb{Q}_f/\mathbb{Q}\right).$ It is clear that $\deg(\omega) \mid |G_f|$ and $G_f$ is contained in the symmetric group $S_{\deg(\omega)}.$ In particular there is a $\deg(\omega)$-cycle in $G_f$ because $\deg(\omega)$ is prime. By Chebotarev's density theorem, (see \cite{Steven} for instance) the set of such primes $\ell$ for which $\omega(x)\hspace{-0.1cm}\pmod{\ell}$ is irreducible, have positive density.

 Writing 
     \[\omega(x)=\prod_{i=0}^{\deg(\omega)-1}(x-\alpha^{\ell^i})\,,\]
     we get $\omega(0)=(-\alpha)^{1+\ell+\ell^2+\cdots+\ell^{\deg(\omega)-1}}.$ We can make $(\omega(0),\ell)=1,$ 
     for all but finitely many $\ell$'s. We now need to verify the condition of Corollary~\ref{coro:ThmMain} for $d=1$ because the Galois group has prime degree. 
     We have $\gcd(\mathrm{ord}~\alpha,\ell-1)=\frac{\mathrm{ord}~\alpha}{\mathrm{ord}~\alpha^{\ell-1}}.$ Fix any $0<\varepsilon<1/2,$ and now the proof is complete if $\mathrm{ord}\left(\alpha^{\ell-1}\right)>\ell^{\varepsilon}$ holds for almost all primes $\ell.$ Note that 
     \[\alpha^{(\ell-1)t}=1 \implies  \alpha^{rt}=\left(\prod_{i=0}^{r-1} \alpha^{\ell^i}\right)^{t} \implies \alpha^{2rt}=\omega(0)^{2t}.\]
     We now consider $R(T)=\text{Res}\left( \omega(x), \prod_{t\leq T}\left(x^{2rt}-\omega(0)^{2t}\right)\right),$
     and counting the number of distinct prime factors of the resultant as in the proof of Lemma~\ref{Igor1}, we see that there exists a  $\delta$ such that 
     \begin{equation*}\label{eq:main3} 
   \max_{\xi \in \rF_{\ell}^*} \left|\sum_{n\le \tau} \el{\xi s_n }\right|\le \tau \ell^{-\delta}
 \end{equation*}
  holds, for at least   $\frac{1}{\deg(\omega)}\pi(y)+O(y^{2\varepsilon})$ many primes $\ell \leq y.$ Once we have the estimate as above, the proof follows immediately following the discussion in the previous page.
\end{proof}
\subsection{Waring-type problems for modular forms}
   Let us recall our discussion from the introduction on Waring problem for modular forms. In this section, we are assuming the modular form is a newform without $CM$. Fix any $0<\varepsilon<\frac{1}{2},$ say $\varepsilon=\frac{1}{3}.$ Then taking $\delta:=\delta(\varepsilon)$ as in Theorem~\ref{thm:main1-1}, the following estimate
\begin{equation*}
    \max_{\xi \in \rF_{\ell}^*} \left|\sum_{n\le \tau}\el{\xi a(p^n) }\right| \leq 
    \tau \ell^{-\delta},
  \end{equation*}
holds for almost all primes $p$ and $\ell.$ The discussion in Section~\ref{subse:waring} shows that $T_s(\lambda)>0$ for any $\lambda \in \mathbb{F}_{\ell},$ and $s>1/\delta,$ where $T_s(\lambda)$ is the number of solutions of the congruence
  \[ 
    a(p^{n_1}) + \cdots +a(p^{n_s}) \equiv \lambda \hspace{-0.2cm} \pmod \ell,  \quad \textrm{with} \quad 1\le n_1, \ldots, n_s \le \tau.
  \]
Moreover, this $s$ does not depend on the choice of the eigenform because $\delta$ does not. More precisely, we have the following result.
\begin{corollary}\label{le:later} Let $f$ be a newform without CM and with rational Fourier coefficients. We say, a proposition $\mathcal{Q}_{f}(p,\ell,s)$ is true if and only if, any element of $\mathbb{F}_{\ell}$ can be written as a sum of at most $s$ elements of the set $\{a(p^n)\}_{n\geq 0}.$ Then there is an absolute constant $s_0$ such that $Q_{f}(p,\ell,s_0)$ is true for almost all primes $p$ and $\ell$. Moreover, $s_0$ does not depend on the choice of $f.$

\end{corollary}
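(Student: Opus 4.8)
The plan is to deduce the statement from part~(i) of Theorem~\ref{thm:main1-1} together with the elementary Waring-type computation already carried out in Section~\ref{subse:waring}. First I would fix once and for all $\varepsilon = 1/3$ and let $\delta = \delta(\varepsilon) > 0$ be the exponent furnished by Theorem~\ref{thm:main1-1}; the point to keep in mind throughout is that this $\delta$ ultimately originates from Corollary~\ref{coro:ThmMain} (hence from Theorem~\ref{Thm:Main} and the Bourgain--Chang bound) and from \cite[Corollary]{Bourgain2005a}, so it depends only on $\varepsilon$ and not on the newform $f$, nor on the primes $p$ and $\ell$. Next I would recall the density fact used inside the proof of Lemma~\ref{int:density1}: since $f$ is a newform without $CM$, the Sato--Tate conjecture implies that $\cP$ has density $1$, that is $a(p^u)\neq 0$ for every $u\ge 1$ for all but a density-zero set of primes $p$ (the finitely many $p\mid N$ being negligible in any case). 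Fix such a prime $p$.

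For this $p$, part~(i) of Theorem~\ref{thm:main1-1} yields, for $\pi(y)+O(y^{2/3})$ primes $\ell\le y$, the estimate
\[
  \max_{\xi\in\rF_{\ell}^*}\left|\sum_{n\le\tau}\el{\xi a(p^n)}\right|\le \tau\,\ell^{-\delta},
\]
where $\tau$ is the least period of $\{a(p^n)\}\pmod\ell$; moreover, by Lemma~\ref{Igor1}, one may assume $\tau>\ell^{\varepsilon}$ for these primes. I would then substitute $s_n=a(p^n)\pmod\ell$ and $R=\tau\ell^{-\delta}$ into the identity~\eqref{eq:WaringJ_k}: writing $T_s(\lambda)$ for the number of tuples $(n_1,\dots,n_s)$ with $1\le n_i\le\tau$ and $a(p^{n_1})+\cdots+a(p^{n_s})\equiv\lambda\pmod\ell$, one obtains
\[
  T_s(\lambda)=\frac{\tau^s}{\ell}+\theta\,(\tau\ell^{-\delta})^s=\frac{\tau^s}{\ell}\bigl(1+\theta\,\ell^{1-s\delta}\bigr),\qquad |\theta|\le 1.
\]
As soon as $s\delta>1$ the right-hand side is strictly positive for every $\lambda\in\rF_{\ell}$, so $\mathcal{Q}_f(p,\ell,s)$ holds; note that a sum of $s$ elements of $\{a(p^n)\}_{n\ge 1}$ is a fortiori a sum of at most $s$ elements of $\{a(p^n)\}_{n\ge 0}$.

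To finish I would set $s_0=\lfloor 1/\delta\rfloor+1$. Since $\delta=\delta(1/3)$ was chosen independently of $f$, the same $s_0$ works for every newform without $CM$ simultaneously, and the two displays above show that $\mathcal{Q}_f(p,\ell,s_0)$ holds for almost every prime $p$ and, for each such $p$, for almost every prime $\ell$. I do not expect a serious obstacle here, since the analytic content has already been isolated in Theorem~\ref{thm:main1-1} and Section~\ref{subse:waring}; the only step that genuinely requires care is the \emph{uniformity} of $\delta$ in $f$, which one must justify by tracking the implied constants back through Corollary~\ref{coro:ThmMain}, Theorem~\ref{Thm:Main} and the Bourgain--Chang and Bourgain sum--product estimates, all of which are absolute once $\varepsilon$ is fixed.
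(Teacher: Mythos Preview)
Your proposal is correct and follows essentially the same route as the paper: fix $\varepsilon=1/3$, invoke part~(i) of Theorem~\ref{thm:main1-1} (noting via Sato--Tate that $\cP$ has density~$1$ for non-$CM$ newforms), feed the resulting bound $\tau\ell^{-\delta}$ into the Waring computation~\eqref{eq:WaringJ_k}, and take $s_0>1/\delta$; the independence of $\delta$ from $f$ is exactly the observation the paper records just before stating the corollary. Your write-up is in fact more explicit than the paper's (which compresses the argument into the paragraph preceding the statement), and your remark about tracking uniformity of $\delta$ through the Bourgain--Chang input is a fair point the paper leaves implicit.
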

Moreover, it follows from Theorem~\ref{thm:main2-1} that

\begin{corollary}\label{le:later2} Suppose the newform is without CM and with integer Fourier coefficients. Then there exists an absolute constant $s_0$ such that, for any large prime $\ell$ satisfying the coprimality condition $(\ell-1,k-1)=1,$ the proposition $Q_{f}(p,\ell,s_0)$ is true for a set of primes $p$ with density at least $1+O\left(\frac{1}{\sqrt{\ell}}\right).$ Moreover, $s_0$ does not depend on the choice of $f.$

\end{corollary}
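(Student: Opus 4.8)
The statement is what one obtains by feeding the exponential sum bound of Theorem~\ref{thm:main2-1} into the Waring-type identity of Section~\ref{subse:waring}, so the plan is to combine the two, being careful that every parameter stays uniform in $f$. First I would fix $\varepsilon = 1/6$ and let $\delta = \delta(\varepsilon) > 0$ be the exponent furnished by Theorem~\ref{Thm:Main} (equivalently by Corollary~\ref{coro:ThmMain}, and by \cite[Corollary]{Bourgain2005a} in the split case); the point to record is that this $\delta$ depends on $\varepsilon$ alone, since the recurrence $\{a(p^n)\}$ has characteristic polynomial $x^2 - a(p)x + p^{k-1}$ of degree $\le 2$ and both of its possible reduction types modulo $\ell$ yield a bound of the same shape $\tau\ell^{-\delta}$ with the same $\delta$. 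Then I would set $s_0 = \lfloor 1/\delta\rfloor + 1$, an absolute constant independent of the newform.

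Next, for a large prime $\ell$ with $(\ell-1,k-1)=1$, Theorem~\ref{thm:main2-1} says that the set $\mathcal{G}_\ell$ of primes $p \nmid N\ell$ for which
\[
\max_{\xi\in\rF_\ell^*}\Bigl|\sum_{n\le\tau}\el{\xi a(p^n)}\Bigr| \le \tau\ell^{-\delta},
\]
with $\tau = \tau(p,\ell)$ the least period of $\{a(p^n)\}\pmod\ell$, has density at least $1 + O_\varepsilon\bigl(\ell^{-(1-3\varepsilon)}\bigr) = 1 + O\bigl(\ell^{-1/2}\bigr)$. (Here $\{a(p^n)\}\pmod\ell$ is genuinely periodic because $p^{k-1}$ is a unit modulo $\ell$ and the initial term $a(p^0)=1$ is nonzero; the finitely many primes $p\mid N$ may simply be discarded.)

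Now I would fix $p\in\mathcal{G}_\ell$ and apply the computation \eqref{eq:WaringJ_k} to the linear recurrence $s_n = a(p^n)$ reduced modulo $\ell$: for every $\lambda\in\mathbb{F}_\ell$ and every integer $k\ge 2$, the number $T_k(\lambda)$ of solutions of $a(p^{n_1})+\cdots+a(p^{n_k})\equiv\lambda\pmod\ell$ with $1\le n_i\le\tau$ equals $\tau^k/\ell + \theta R^k$ for some $|\theta|\le 1$, where $R$ denotes the exponential sum above. Using the bound defining $\mathcal{G}_\ell$ gives
\[
T_k(\lambda) \ \ge\ \frac{\tau^k}{\ell}\bigl(1 - \ell^{1-k\delta}\bigr),
\]
and for $k = s_0 > 1/\delta$ the right-hand side is strictly positive; as $T_{s_0}(\lambda)$ is a nonnegative integer, $T_{s_0}(\lambda)\ge 1$. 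Since $\{a(p^n)\bmod\ell : n\ge 0\}$ coincides with $\{a(p^n)\bmod\ell : 1\le n\le\tau\}$ by periodicity, this is precisely the assertion that $\mathcal{Q}_f(p,\ell,s_0)$ holds. As this is valid for every $p$ in the set $\mathcal{G}_\ell$ of density $1 + O(\ell^{-1/2})$, the corollary follows.

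There is no real obstacle here beyond the uniformity claim that $s_0$ is independent of $f$, which is handled by tracking that $\delta = \delta(\varepsilon)$ at each step and that the two reduction types of the characteristic polynomial produce the same exponent. In particular, the primes $p$ of small period — for which the exponential sum bound would be vacuous — are automatically absorbed into the exceptional set of density $O(\ell^{-1/2})$ coming from Theorem~\ref{thm:main2-1}, so they need no separate treatment.
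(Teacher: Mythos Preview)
Your proposal is correct and follows precisely the paper's approach: the paper simply states that Corollary~\ref{le:later2} ``follows from Theorem~\ref{thm:main2-1}'', relying on the Waring-type computation~\eqref{eq:WaringJ_k} already carried out in Section~\ref{subse:waring}, and you have spelled this out explicitly with the appropriate choice $\varepsilon=1/6$ (so that $1-3\varepsilon=1/2$) and $s_0=\lfloor 1/\delta\rfloor+1$. Your remarks on uniformity of $\delta$ in $f$ and on the small-period primes being absorbed into the exceptional set are accurate and in the same spirit as the paper's treatment.
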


\subsection{Bound of non-linearity of a linear recurrence sequence}   Let $\{s_n\}$ be  a linear recurrence sequence modulo $\ell$ as in~\eqref{eq:RecurrenceSeq}
   with order $r,$ $(a_0,\ell)=1$ and period $\tau$. For  $0 \le b \le \ell^r-1 $, let us define the sum
 \[
   W(b)= \sum_{n \le \tau} \el{s_n + { \inner{b,n}}},
 \]  
   where $\inner{b,n}$ denotes the inner product $\inner{b,n}=b_0 n_0 + \cdots + b_{r-1}n_{r-1}$
   assuming that $0\le b,n\le \ell^r-1$ are written in its $\ell$--ary expansion 
 \[
   b=b_0 + b_1 \ell + \cdots + b_{r-1}\ell^{r-1}, \qquad n=n_0 + n_1 \ell + \cdots + n_{r-1}\ell^{r-1}.
 \]  
   Bounds for $W(b)$ have cryptographic significance, see~\cite{Shparlinski-Winterhof} and 
   references therein. Shparlinski and Winterhof~\cite[Theorem 1]{Shparlinski-Winterhof} 
   proved that
 \[
   \max_{0 \le b \le \ell^r-1} |W(b)| \ll \tau^{3/4} r^{1/4} \ell^{r/8},
 \]  
   whenever the characteristic polynomial of $\{s_n\}$ is irreducible. Such bound is asymptotically 
   effective if $r\ell^{r/2}/\tau \to 0.$ Combining Corollary~\ref{coro:ThmMain} and the ideas of
   Shparlinski and Winterhof we are able to improve such bound for a large class of linear recurrence sequences 
   in the range $\tau > \ell^{\varepsilon}.$ 
   For example, assuming hypothesis of Corollary~\ref{coro:ThmMain}, if $r$ is fixed then  
   $|W(b)|\ll \tau \ell^{-\delta'}$ as $\ell \to \infty$
   for some  $\delta'>0.$ In general we get $|W(b)|=o(\tau)$ if 
   $r \log \ell / \ell^{\delta'} \to 0$ as $\ell \to \infty.$ More precisely
   \begin{corollary}\label{coro:Non-linearity}
   Let $\ell$ be a prime number, $\varepsilon>0$ and $\{s_n\}$ be a linear recurrence sequence of order $r\ge1.$
   If the characteristic polynomial $f(x)$ in $\rF_{\ell}[x]$ is irreducible polynomial with $(f(0),\ell)=1,$ and the least period $\tau$ satisfies 
 \[ \tau > \ell^{\varepsilon}, \quad \textrm{and} \quad 
    \max_{\substack{d<r \\ d | r}} (\tau, \ell^d -1 ) < \tau \ell^{-\varepsilon}, \]  
   then there exists a $\delta=\delta(\varepsilon)>0$ such that
 \[\max_{0 \le b \le \ell^r-1} \left| \sum_{n \le \tau} \el{s_n + { \inner{b,n}}} \right | \le 
   \tau \ell^{-\delta/4} (r \log \ell)^{1/4}\left(1 + \ell^{-\delta/4} (r \log \ell)^{1/4} \right).\]  
 \end{corollary}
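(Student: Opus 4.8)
The plan is to follow the completing-the-sum idea of Shparlinski and Winterhof, but feed it the stronger input provided by Corollary~\ref{coro:ThmMain} in place of the Korobov-type bound $\ell^{r/2}$. First I would fix a parameter $t$ (a truncation length for the $\ell$-ary digits) to be optimized at the end, and split the sum $W(b)=\sum_{n\le\tau}\el{s_n+\inner{b,n}}$ according to the top $r-t$ digits of $n$; writing $n=n'+\ell^{t}n''$ with $0\le n'<\ell^{t}$, the linear form $\inner{b,n}$ separates into a part depending only on $n'$ and a part depending only on $n''$. The key structural fact is that $s_{n'+\ell^t n''}$, viewed as a function of $n''$ for fixed $n'$, is again (a shift of) the linear recurrence sequence $\{s_m\}$ evaluated along an arithmetic progression, because $s_{m}$ is an exponential polynomial $\sum_i \beta_i\alpha_i^{m}$; substituting $m\mapsto n'+\ell^t n''$ just rescales the roots $\alpha_i\mapsto\alpha_i^{\ell^t}$ and rescales the coefficients. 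This turns the inner sum over $n''$ into an exponential sum of a linear recurrence sequence whose characteristic roots are powers of the original $\alpha_i$, and the hypothesis $(f(0),\ell)=1$ together with irreducibility guarantees $\alpha$ (hence $\alpha^{\ell^t}$) is nonzero, so the sequence is still nonzero of the same order $r$.

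The second step is to verify that the new recurrence sequence still satisfies the hypotheses of Corollary~\ref{coro:ThmMain}, i.e. that its period $\tau^{*}$ obeys $\max_{d<r,\,d\mid r}\gcd(\tau^{*},\ell^{d}-1)<\tau^{*}\ell^{-\varepsilon'}$ for a slightly smaller $\varepsilon'$. Since the period of $\{s_{\ell^t n''}\}$ is $\tau/\gcd(\tau,\ell^t)$ and $\gcd(\tau,\ell^t)$ is a power of $\ell$, which is coprime to each $\ell^d-1$, the gcd condition transfers with only the harmless loss coming from the factor $\gcd(\tau,\ell^t)$; this is where the extra hypothesis $\tau>\ell^{\varepsilon}$ is used, to ensure the shortened period is still $>\ell^{\varepsilon''}$. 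Applying Corollary~\ref{coro:ThmMain} (absorbing the additive characters $\el{\inner{b,n''}}$, which are nontrivial additive characters of $\mathbb{F}_{\ell}$ precisely as in the proof of Theorem~\ref{Thm:Main}) then bounds each inner sum by $\tau^{*}\ell^{-\delta}$ for some $\delta=\delta(\varepsilon)>0$. Summing the trivial bound $\ell^{t}$ over the $n'$-range that is \emph{not} handled this way, and $\tau^{*}\ell^{-\delta}$ over the remaining $\ell^{t}$ cosets, and then choosing $\ell^{t}$ of size about $\tau\ell^{-\delta/2}(r\log\ell)^{1/2}$ — or rather, balancing the two contributions — produces a bound of the shape $\tau\ell^{-\delta/4}(r\log\ell)^{1/4}$ after the optimization; the secondary term $\ell^{-\delta/4}(r\log\ell)^{1/4}$ in the stated inequality comes from the error in approximating the digit counts, exactly as the square term in the Shparlinski--Winterhof estimate.

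The main obstacle I anticipate is bookkeeping the interaction between the $\ell$-ary digit decomposition of $n$ and the index shift inside the recurrence: one must be careful that after fixing the low digits $n'$ the remaining sum over $n''$ genuinely ranges over a full period (or a controlled fraction of one) of the rescaled sequence, and that the coefficient $\beta_i\alpha_i^{n'}$ produced by the split does not accidentally vanish — which it cannot, since $\alpha_i\ne 0$ and the original sequence is nonzero, but this needs to be said. A secondary technical point is that the inner product $\inner{b,n}$ only partially respects the decomposition $n=n'+\ell^t n''$ because digit $n_{t-1}$ and digit $n_t$ straddle the cut; one handles this by peeling off the single straddling digit with an extra summation, contributing only a factor $\ell$ that is swept into the $(r\log\ell)^{1/4}$ term after optimization. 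Once these points are dispatched, the rest is the same routine parameter balancing as in~\cite[Theorem 1]{Shparlinski-Winterhof}, now with the improved exponent coming from $\tau\ell^{-\delta}$ rather than $\ell^{r/2}$.
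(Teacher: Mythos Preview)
Your proposal is correct and follows essentially the same approach as the paper: run the Shparlinski--Winterhof argument verbatim but substitute the bound from Corollary~\ref{coro:ThmMain} (that is, $R=\tau\ell^{-\delta}$) for Korobov's bound $\ell^{r/2}$ at the point where the exponential-sum input is invoked. The paper's own proof is in fact even more terse than yours---it simply cites \cite[Theorem~1]{Shparlinski-Winterhof} and notes the substitution---so your sketch of the digit-splitting and parameter-balancing mechanics goes somewhat beyond what the paper records.
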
   
 \begin{proof}  
   The proof follows the same steps given by Shparlinski and Winterhof~\cite[Theorem 1]{Shparlinski-Winterhof}.
   We just need to employ the bound given by Corollary~\ref{coro:ThmMain} instead of Korobov's bound.
\end{proof}

\noindent{\textbf{Note.}}
we have an improvement on the bound due to Shparlinski and Winterhof if 
    \[\tau \leq \frac{\ell^{r/2+\delta}}{\log \ell}.\]
Clearly there are many such cases, for instance, one can consider any element in $\mathbb{F}_{\ell}^{*}$ of order smaller than $\ell^{\frac{1}{2}}.$

\section*{Acknowledgements}
 Authors would like to thank Igor Shparlinski, through personal communications, and Will Sawin, through MathOverflow, for useful suggestions during the writing of the article. 
 
 In addition, authors would like to thank University of G\"ottingen, where most of the discussion and work took place, 
for its hospitality and extend their thanks to Harald Helfgott for the support and encouragement. 

The work of the first author is supported by the ERC Consolidator grants  648329 and 681207 and the second author is
 supported by the ERC Consolidator grant 648329.

\nocite{}
\bibliographystyle{abbrv}
\bibliography{BBG}

\end{document}